\definecolor{cite}{rgb}{0.30,0.60,1.00}
\definecolor{url}{rgb}{0.00,0.00,0.80}
\definecolor{link}{rgb}{0.40,0.10,0.20}
\newtheorem{theorem}{Theorem}[section]
\newtheorem{proposition}[theorem]{Proposition}
\newtheorem{corollary}[theorem]{Corollary}
\theoremstyle{definition}
\theoremstyle{definition}
\theoremstyle{definition}
\newcommand{\cComplex}{\mathbb{C}}
\newcommand{\multiplicativegroup}[1]{#1^{\times}}
\newcommand{\Hom}{\mathrm{Hom}}
\newcommand{\idmap}{\mathrm{id}}
\newcommand{\conjugate}[1]{\overline{#1}}
\newcommand{\abs}[1]{\left|#1\right|}
\newcommand{\sizeof}[1]{\left|#1\right|}
\newcommand{\innerproduct}[2]{\left\langle #1,#2\right\rangle}
\newcommand{\fieldCharacter}{\psi}
\newcommand{\Ind}[3]{\mathrm{Ind}_{#1}^{#2}\left(#3\right)}
\newcommand{\SpehRepresentation}[2]{\Delta\left(#1, #2\right)}
\newcommand{\besselSpehFunction}[2]{\mathcal{BS}_{\SpehRepresentation{#1}{#2}, \fieldCharacter}}
\newcommand{\Irr}{\mathrm{Irr}}
\newcommand{\IdentityMatrix}[1]{I_{#1}}
\newcommand{\diag}{\mathrm{diag}}
\newcommand{\trace}{\operatorname{tr}}
\newcommand{\GL}{\mathrm{GL}}
\newcommand{\UnipotentRadicalForWssRecursion}[2]{\mathcal{Y}_{c,k}}
\newcommand{\FieldNorm}[2]{\mathrm{N}_{#1:#2}}
\newcommand{\FieldTrace}{\mathrm{Tr}}
\newcommand{\finiteField}{\mathbb{F}}
\newcommand{\finiteFieldExtension}[1]{\finiteField_{#1}}
\newcommand{\algebraicClosure}[1]{\overline{#1}}
\newcommand{\charactergroup}[1]{\widehat{\multiplicativegroup{\finiteFieldExtension{#1}}}}
\newcommand{\Frobenius}{\operatorname{Fr}}
\newcommand{\multiplcativeScheme}{\mathbb{G}_m}
\newcommand{\affineLine}{\mathbb{A}^1}
\newcommand{\squareMatrix}{\operatorname{Mat}}
\newcommand{\SymmetricGroup}{\mathfrak{S}}
\newcommand{\Erdelyi}{Erd{\'e}lyi}
\newcommand{\Toth}{T{\'o}th}
\newcommand{\GKGaussSum}[3]{\mathcal{G}\left(#1 \times #2, #3\right)}
\newcommand{\GKGaussSumScalar}[3]{\mathrm{G}\left(#1 \times #2, #3\right)}
\newcommand{\ExoticKloosterman}{\mathrm{Kl}}
\newcommand{\GaussSumCharacter}[4]{\tau_{#1}\left(#2 \times #3, #4\right)}
\newcommand{\IrrCusp}{\Irr_{\mathrm{cusp}}}
\newcommand{\convolutionWithCompactSupport}{\boldsymbol{\mathrm{R}}}
\newcommand{\ladicnumbers}{\algebraicClosure{\mathbb{Q}_{\ell}}}
\newcommand{\artinScrier}{\operatorname{AS}}
\newcommand{\KloostermanSumClassFunction}{\mathcal{K}}
\title{On matrix Kloosterman sums and Hall--Littlewood polynomials}
\author{Elad Zelingher}
\address{Department of Mathematics, University of Michigan, 1844 East Hall, 530 Church Street, Ann Arbor, MI 48109-1043 USA}
\email{eladz@umich.edu}
\keywords{Matrix Kloosterman sums}
\subjclass[2020]{20C33, 11L05, 11T24}
\begin{document}

\begin{abstract}
	We prove an identity relating twisted matrix Kloosterman sums to modified Hall--Littlewood polynomials evaluated at the roots of the characteristic polynomial associated to a twisted Kloosterman sheaf. This solves a conjecture of \Erdelyi{} and \Toth{} \cite[Conjecture 5.9]{erdelyi2021matrix}.
\end{abstract}
	\dedicatory{\bf In memory of Jonathan Seidman}
\maketitle

\section{Introduction}\label{sec:introduction}
Kloosterman sums are central objects in number theory. They are used in analytic number theory \cite{KowalskiMichelSawin2017}, play an important role in the Kuznetsov trace formula \cite{DeshouillersIwaniec1982}, and appear in formulas in representation theory \cite{Zelingher2023}. Kloosterman sheaves are sheaf incarnations of Kloosterman sums. They were defined by Deligne \cite{deligne569cohomologie} and were studied extensively by Katz \cite{katz2016gauss}. Kloosterman sheaves play an important role in the geometric Langlands program, and provide a source of automorphic representations for the function field case  \cite{HeinlothNgoYun2013, Yun2016}.

Let $\finiteField$ be a finite field with $q$ elements, and let $\fieldCharacter \colon \finiteField \to \multiplicativegroup{\cComplex}$ be a non-trivial additive character. Recently, \Erdelyi{} and \Toth{} started studying \emph{matrix Kloosterman sums}, see \cite{erdelyi2021matrix, ErdelyiTothZabradi2024, erdelyi2022purity}. For any $x \in \squareMatrix_n\left(\finiteField\right)$, they considered the family $$\ExoticKloosterman_m\left(\fieldCharacter, x\right) = \sum_{g \in \GL_n\left(\finiteField_m\right)} \fieldCharacter_m\left(\trace \left(g + x g^{-1}\right)\right),$$
where $\finiteFieldExtension{m} \slash \finiteField$ is a field extension of degree $m$, and $\fieldCharacter_m = \fieldCharacter \circ \trace_{\finiteFieldExtension{m} \slash \finiteField}$. Write $\ExoticKloosterman\left(\fieldCharacter, x\right)$ for $\ExoticKloosterman_1\left(\fieldCharacter, x\right)$. Notice that the sum $\ExoticKloosterman_m\left(\fieldCharacter, x\right)$ does not depend on the conjugacy class of $x$. Let us mention that these sums were considered and studied previously for some special cases, especially for the case where $x$ is a scalar matrix. They were considered in a slightly more general form by Hodges in \cite{Hodges1956}, and were evaluated for a scalar matrix $x$ in \cite[Theorem C]{Kim1998}, \cite[Theorem 2]{ChaeKim2003}, and \cite[Theorem 2]{Fulman2001}. See also \cite[Section 1.4.3]{GorodetskyRodgers2021}.

\Erdelyi{} and \Toth{} proved many properties of this family, some of which are analogous to the properties of classical Kloosterman sums. Among these properties, they proved a multiplicativity property: if $n = n_1 + n_2$ and $x_1 \in \squareMatrix_{n_1}\left(\finiteField\right)$ and $x_2 \in \squareMatrix_{n_2}\left(\finiteField\right)$ have no common eigenvalues over the algebraic closure $\algebraicClosure{\finiteField}$, then $$\ExoticKloosterman\left(\fieldCharacter, \diag\left(x_1, x_2\right)\right) = q^{n_1 n_2} \ExoticKloosterman\left(\fieldCharacter, x_1\right) \ExoticKloosterman\left(\fieldCharacter, x_2\right).$$
They also computed $\ExoticKloosterman\left(\fieldCharacter, x\right)$ for nilpotent elements $x$ and gave an algorithm for computing $\ExoticKloosterman\left(\fieldCharacter, x\right)$ in terms of a polynomial in the classical Kloosterman sum $\ExoticKloosterman\left(\fieldCharacter, \xi \right)$ where $x$ is a matrix with a single eigenvalue $\xi \in \finiteField$. 

Since $\ExoticKloosterman\left(\fieldCharacter, x\right)$ only depends on the conjugacy class of $x$, thanks to the multiplicativity property, it suffices to know how to compute $\ExoticKloosterman\left(\fieldCharacter, x\right)$ for a matrix $x$ whose characteristic polynomial is a power of an irreducible polynomial over $\finiteField$. The simplest example that has not been treated by the work of \Erdelyi{} and \Toth{} above is the case where $x$ has an irreducible characteristic polynomial over $\finiteField$. Such $x$ is called a \emph{regular elliptic element of $\GL_n\left(\finiteField\right)$}. Given such $x$, let $\{\xi, \xi^q, \dots, \xi^{q^{n-1}}\}$ be its $n$ different eigenvalues in $\finiteFieldExtension{n}$. Then \Erdelyi{} and \Toth{} conjectured \cite[Conjecture 5.9]{erdelyi2021matrix} that if $q = p^f$ and $p$ is a large enough prime, then $$\ExoticKloosterman\left(\fieldCharacter, x\right) = \left(-1\right)^{n+1} q^{\binom{n}{2}} \ExoticKloosterman_n\left(\fieldCharacter, \xi\right).$$
They gave a heuristic explanation for this conjecture, performed computations that provided evidence of it, and proved it for the special case where $n = 2$.

In this work, we prove identities confirming this conjecture in
a more general form. Our first theorem (\Cref{thm:regular-elliptic-elements}) gives a generalization of the conjecture of \Erdelyi{} and \Toth{} for regular elliptic elements.

\begin{theorem}\label{thm:main-theorem}
	For any finite field $\finiteField$ and any regular elliptic $x \in \GL_n\left(\finiteField\right)$ with eigenvalues $\{\xi, \xi^q, \dots, \xi^{q^{n-1}}\}$, and for any $k > 1$, we have the identity
	$$ \sum_{\substack{g_1, \dots, g_k \in \GL_n\left(\finiteField\right)\\
	g_1 \cdot g_2 \cdot \dots \cdot g_k = x}} \fieldCharacter\left(\trace \sum_{i=1}^k g_i \right) = \left(-1\right)^{\left(k-1\right)\left(n+1\right)} q^{\left(k-1\right)\binom{n}{2}} \sum_{\substack{t_1,\dots,t_k \in \multiplicativegroup{\finiteFieldExtension{n}}\\
\prod_{i=1}^k t_i = \xi}} \fieldCharacter_n\left( \sum_{i=1}^k t_i \right).$$
More generally, for any character $\alpha \colon \left(\multiplicativegroup{\finiteField}\right)^k \to \multiplicativegroup{\cComplex}$, we have $$ \ExoticKloosterman\left(\alpha, \fieldCharacter, x\right) = \left(-1\right)^{\left(k - 1\right)\left(n + 1\right)} q^{\left(k-1\right)\binom{n}{2}} \ExoticKloosterman_n\left(\alpha, \fieldCharacter, \xi\right),$$
where $\ExoticKloosterman\left(\alpha, \fieldCharacter, x\right)$ and $\ExoticKloosterman_n\left(\alpha, \fieldCharacter, \xi\right)$ are the twisted matrix Kloosterman sum and the twisted Kloosterman sum, respectively (see \Cref{sec:twisted-kloosterman-sums}).
\end{theorem}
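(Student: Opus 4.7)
My plan is to expand both the matrix and scalar Kloosterman sums via character-theoretic Fourier analysis on the respective groups, and to match the two expansions termwise using a Macdonald--Kondo type Gauss-sum identity for cuspidal representations of $\GL_n(\finiteField)$.

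Setting $f_i(g) = \alpha_i(\det g)\,\fieldCharacter(\trace g)$, the matrix Kloosterman sum is the $k$-fold convolution $(f_1 * \cdots * f_k)(x)$ on $\GL_n(\finiteField)$. Each $f_i$ is a class function, so expanding it in the basis of irreducible characters and using $\chi_\sigma * \chi_\tau = \delta_{\sigma, \tau} \frac{|\GL_n(\finiteField)|}{\dim \sigma}\chi_\sigma$ yields
\[
\ExoticKloosterman(\alpha, \fieldCharacter, x) = \sum_{\sigma} \frac{\dim \sigma}{|\GL_n(\finiteField)|} \Big(\prod_{i=1}^k \lambda_{\sigma, i}\Big) \chi_\sigma(x),
\]
where $\lambda_{\sigma, i} = (\dim \sigma)^{-1} \sum_g f_i(g)\,\chi_\sigma(g^{-1})$. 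For $x$ regular elliptic the centralizer is the anisotropic torus $\multiplicativegroup{\finiteFieldExtension{n}}$, so by Deligne--Lusztig theory (Green's character formula) $\chi_\sigma(x) = 0$ unless $\sigma$ is the cuspidal representation $\sigma_\theta$ attached to a regular character $\theta$ of $\multiplicativegroup{\finiteFieldExtension{n}}$, in which case $\chi_{\sigma_\theta}(x) = (-1)^{n-1} \sum_{j=0}^{n-1}\theta(\xi^{q^j})$.

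Running the analogous Fourier inversion on the abelian group $\multiplicativegroup{\finiteFieldExtension{n}}$ for the scalar hyper-Kloosterman sum gives
\[
\ExoticKloosterman_n(\alpha, \fieldCharacter, \xi) = \frac{1}{q^n - 1} \sum_\theta \Big(\prod_{i=1}^k \GaussSum{\theta \cdot (\alpha_i \circ \aFieldNorm)}{\fieldCharacter_n}\Big)\theta(\xi^{-1}).
\]
The proof then reduces to the Macdonald--Kondo identity
\[
\lambda_{\sigma_\theta, i} = \frac{(-1)^{n-1} q^{\binom{n}{2}}}{\dim \sigma_\theta}\, \GaussSum{\theta \cdot (\alpha_i \circ \aFieldNorm)}{\fieldCharacter_n},
\]
which expresses the twisted Gauss sum of a cuspidal as the abelian Gauss sum of the inducing regular character. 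Substituting this back, and using $|\GL_n(\finiteField)| = q^{\binom{n}{2}} \prod_{j=1}^n(q^j - 1)$ together with the known formula for $\dim \sigma_\theta$, the sum over regular characters $\theta$ (where the $n$-fold Galois multiplicity built into $\chi_{\sigma_\theta}(x)$ accounts for the Galois orbit $\{\theta^{q^j}\}$ on the scalar side) reproduces the scalar Kloosterman sum up to the claimed prefactor $(-1)^{(k-1)(n+1)} q^{(k-1)\binom{n}{2}}$.

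The main obstacle is the twisted Kondo--Macdonald identity itself: establishing it uniformly for arbitrary $\alpha$ and regular $\theta$ is the core technical step and is the finite-field counterpart of the compatibility between the $\gamma$-factor of a tamely ramified supercuspidal and the Gauss sum of the regular character that constructs it via base change. Once the identity is in hand, the sign and $q$-power bookkeeping is a routine parity check: $k$ copies of $(-1)^{n-1}$ from the identity combined with one copy from $\chi_{\sigma_\theta}(x)$ give $(-1)^{(k+1)(n-1)} = (-1)^{(k-1)(n+1)}$, matching the required sign.
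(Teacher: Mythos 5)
Your overall framework --- expanding the $k$-fold convolution in irreducible characters, identifying the convolution eigenvalues with Kondo's non-abelian Gauss sums, and matching against the Fourier expansion of the scalar hyper-Kloosterman sum on $\multiplicativegroup{\finiteFieldExtension{n}}$ --- is exactly the paper's strategy. But there is a genuine gap at the support step: it is false that $\trace\pi\left(x\right)=0$ for a regular elliptic $x$ unless $\pi$ is cuspidal. The trivial representation already violates this, as does the Steinberg representation. The correct statement (Gordon's theorem, used in the paper) is that $\trace\pi\left(x\right)\ne 0$ forces $\pi=\sigma^{\left(b-r,1^r\right)}$ for some cuspidal $\sigma$ of $\GL_a\left(\finiteField\right)$ with $ab=n$ and $0\le r\le b-1$; the cuspidal case is only $a=n$, $b=1$. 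Deligne--Lusztig vanishing applies to the virtual characters $R_T^\theta$ individually, not to the irreducible constituents obtained as linear combinations of them, which is where your reduction breaks.

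This is not a cancellable error: the non-cuspidal terms are needed for the identity to close. On the scalar side, Fourier inversion on $\multiplicativegroup{\finiteFieldExtension{n}}$ runs over \emph{all} characters $\theta$ of $\multiplicativegroup{\finiteFieldExtension{n}}$, and the non-regular ones (those factoring as $\theta'\circ\FieldNorm{n}{a}$ with $a<n$) contribute nontrivially to $\ExoticKloosterman_n\left(\alpha,\fieldCharacter,\xi\right)$. In the paper these terms arise precisely from the non-cuspidal representations $\sigma^{\left(b-r,1^r\right)}$ with $b>1$: one sums over $r$ using the $q$-binomial theorem, applies the Hasse--Davenport relation to convert $\tau_a\left(\theta'\times\alpha_i,\fieldCharacter\right)^b$ into $\tau_n\left(\theta'\circ\FieldNorm{n}{a}\times\alpha_i,\fieldCharacter\right)$ up to the power of $q$, and then the triple sum over $\left(a,b,\sigma\right)$ reassembles into a single sum over all of $\charactergroup{n}$. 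Restricting to cuspidals as you propose would yield only the regular-character part of the scalar sum and hence a wrong identity. Your Kondo-type identity for cuspidals and the final sign bookkeeping are fine as far as they go, but you need the full Kondo formula (valid for arbitrary cuspidal support), Gordon's classification with the dimension and character-value formulas for $\sigma^{\left(b-r,1^r\right)}$, and the $q$-binomial plus Hasse--Davenport steps to handle the non-cuspidal contributions.
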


Our method of proving this theorem is based on the representation theory of finite general linear groups. We utilize a computation of non-abelian Gauss sums due to Kondo \cite{Kondo1963}. We then use a classification for representations of $\GL_n\left(\finiteField\right)$ whose characters support regular elliptic elements, and a formula for their character values at these elements. This was worked out by Graham Gordon \cite{Gordon2022}.

Our second theorem (\Cref{thm:explicit-expression-for-jordan-block-with-hall-littlewood-polynomial}) allows us to express twisted matrix Kloosterman sums of generalized Jordan matrices as modified Hall--Littlewood polynomials, evaluated at roots of the characteristic polynomial associated with the action of the geometric Frobenius on a stalk of a twisted Kloosterman sheaf. This generalizes \cite[Theorem 3]{erdelyi2022purity}, where it was proved for the special case $a = 1$, $k = 2$ and $\alpha = 1$.
\begin{theorem}\label{thm:main-thm-hall-littlewood}
	For any finite field $\finiteField$, any $n = ab$, any regular elliptic $x \in \GL_a\left(\finiteField\right)$ with eigenvalues $\{\xi, \xi^q, \dots, \xi^{q^{a-1}}\}$, and any partition $\mu \vdash b$, we have the identity
	$$ \sum_{\substack{g_1, \dots, g_k \in \GL_n\left(\finiteField\right)\\
			g_1 \cdot g_2 \cdot \dots \cdot g_k = J_{\mu}\left(x\right)}} \fieldCharacter\left(\trace \sum_{i=1}^k g_i \right) = \left(-1\right)^{\left(k-1\right)n} q^{\left(k-1\right)\binom{n}{2}} \mathrm{\tilde{H}}_{\mu}\left(\omega_1,\dots,\omega_k;q^a\right),$$
		where $J_{\mu}\left(x\right)$ is the generalized Jordan matrix corresponding to $\mu$ and $x$ (see \Cref{subsec:conjugacy-classes-of-gln}), the complex numbers $\omega_1, \dots, \omega_k$ are the roots of $\ExoticKloosterman^{\finiteFieldExtension{a}}\left( \fieldCharacter \right) = \ExoticKloosterman^{\finiteFieldExtension{a}}\left(1, \fieldCharacter \right)$ at $\xi$ (see \Cref{subsec:roots-of-frobenius}) and $\mathrm{\tilde{H}}_{\mu}\left(x_1,\dots,x_k;t\right)$ is the modified Hall--Littlewood polynomial (see \Cref{subsec:hall-littlewood}): $$\mathrm{\tilde{H}}_{\mu}\left(x_1,\dots,x_k;q^a\right) = \sum_{\lambda \vdash b}  \# \left\{ \mathcal{F} \text{ flag of type } \lambda \text{ in } \finiteFieldExtension{a}^b \mid J_{\mu}\left(\xi\right) \mathcal{F} = \mathcal{F}  \right\} \cdot m_{\lambda}\left(x_1,\dots,x_k\right).$$ Here, $m_{\lambda}\left(x_1,\dots,x_k\right)$ is the monomial symmetric polynomial corresponding to the partition $\lambda$. More generally, for any character $\alpha \colon \left(\multiplicativegroup{\finiteField}\right)^k \to \multiplicativegroup{\cComplex}$, we have
		$$ \ExoticKloosterman\left(\alpha, \fieldCharacter, J_{\mu}\left(x\right) \right) = \left(-1\right)^{\left(k-1\right)n} q^{\left(k-1\right)\binom{n}{2}} \mathrm{\tilde{H}}_{\mu}\left(\omega_1,\dots,\omega_k;q^a\right),$$
		where $\omega_1, \dots, \omega_k$ are the roots of $\ExoticKloosterman^{\finiteFieldExtension{a}}\left(\alpha, \fieldCharacter \right)$ at $\xi$ (see \Cref{subsec:roots-of-frobenius}).
\end{theorem}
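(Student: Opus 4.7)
The plan is to follow the strategy of \Cref{thm:main-theorem}, but with $J_\mu(x)$ in place of the regular elliptic element $x$, and to let the Jordan data $\mu$ produce the modified Hall--Littlewood specialization via Green's character theory for $\GL_n(\finiteField)$.

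First, I would expand the twisted matrix Kloosterman sum by harmonic analysis on $\GL_n(\finiteField)$. The $k$-fold convolution of the twisted additive character decomposes spectrally as
$$ \ExoticKloosterman(\alpha,\fieldCharacter,J_\mu(x)) = \sum_{\pi\in\Irr\GL_n(\finiteField)} c_\pi\cdot\chi_\pi(J_\mu(x)),$$
where $c_\pi$ is, up to the Plancherel weight $\dim\pi/|\GL_n(\finiteField)|$, the $k$-fold non-abelian Gauss sum of $\pi$ against $\alpha$ and $\fieldCharacter$, to be evaluated by Kondo's formula. Because the characteristic polynomial of $x$ is irreducible of degree $a$ with roots $\{\xi,\xi^q,\dots,\xi^{q^{a-1}}\}$, the element $J_\mu(x)$ has Jordan type, in Green's parameterization of conjugacy classes in $\GL_n(\finiteField)$, supported only at the single irreducible polynomial attached to $\xi$, with partition datum $\mu$. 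Green's character formula then forces $\chi_\pi(J_\mu(x))=0$ unless the cuspidal support of $\pi$ is concentrated at the Frobenius orbit of $\xi$, so the surviving representations $\pi_\lambda$ are indexed by partitions $\lambda\vdash b$.

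For each such $\pi_\lambda$, Green's formula expresses $\chi_{\pi_\lambda}(J_\mu(x))$ as an explicit power of $q^a$ times the number of $J_\mu(\xi)$-stable flags of type $\lambda$ in $\finiteFieldExtension{a}^b$. Kondo's formula, on the other hand, evaluates $c_{\pi_\lambda}$ by factorizing the Gauss sum along the elliptic torus built from $\xi$ and $\lambda$ that indexes $\pi_\lambda$; taking $k$ copies and summing produces, up to the universal constant $(-1)^{(k-1)n}q^{(k-1)\binom{n}{2}}$, the monomial symmetric polynomial $m_\lambda(\omega_1,\dots,\omega_k)$ in the roots of the twisted Kloosterman sheaf at $\xi$. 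Assembling the pieces yields
$$ \ExoticKloosterman(\alpha,\fieldCharacter,J_\mu(x)) = (-1)^{(k-1)n}q^{(k-1)\binom{n}{2}}\sum_{\lambda\vdash b}\#\{\mathcal{F}\mid J_\mu(\xi)\mathcal{F}=\mathcal{F}\}\,m_\lambda(\omega_1,\dots,\omega_k),$$
which matches the stated expansion of $\mathrm{\tilde{H}}_\mu(\omega_1,\dots,\omega_k;q^a)$.

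The principal obstacle will be the precise identification of $\chi_{\pi_\lambda}(J_\mu(x))$ with the flag count: Green's polynomials are combinatorially subtle, and one must carefully track signs and powers of $q^a$ in passing between the Harish-Chandra / Deligne--Lusztig parameterization and the geometric fixed-point count. A secondary difficulty is to verify that the $k$-fold Kondo Gauss sum is exactly the monomial symmetric polynomial $m_\lambda$ rather than a power sum $p_\lambda$ or Schur function $s_\lambda$; this is where the fact that $\omega_1,\dots,\omega_k$ are \emph{unordered} roots of the Kloosterman sheaf at $\xi$ is decisive, and partitions $\lambda$ with repeated parts require particular care.
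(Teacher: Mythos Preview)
Your proposal contains a genuine gap at the vanishing step. It is \emph{not} true that $\chi_\pi(J_\mu(x))=0$ unless the cuspidal support of $\pi$ is ``concentrated at the Frobenius orbit of $\xi$''. Cuspidal support consists of cuspidal representations, parametrized by Frobenius orbits of regular \emph{characters}, and has no direct relation to the eigenvalue $\xi$ of $x$. The trivial representation, for instance, has $\chi_{\mathrm{triv}}(J_\mu(x))=1$ while its cuspidal support is $n$ copies of the trivial character of $\multiplicativegroup{\finiteField}$. In fact most irreducible characters are nonzero at $J_\mu(x)$; the miracle for a regular elliptic $x$ (Gordon's \Cref{thm:support-of-elliptic-elements}) has no analogue for general $J_\mu(x)$.

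There is a second problem: even restricting to the representations $\pi_\lambda=\sigma^\lambda$ with $\lambda\vdash b$, Kondo's formula (\Cref{thm:kondos-formula}) shows $\GKGaussSumScalar{\pi_\lambda}{\alpha_i}{\fieldCharacter}$ depends only on the cuspidal support of $\pi_\lambda$, which is $b$ copies of $\sigma$ for \emph{every} $\lambda$. So the Gauss-sum factor is the same for all $\lambda$ and cannot possibly produce the monomial polynomial $m_\lambda(\omega_1,\dots,\omega_k)$. Your ``secondary difficulty'' is thus not a bookkeeping issue but a structural obstruction.

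The paper resolves both problems by abandoning the irreducible-character basis and expanding instead in Green's \emph{basic characters} $\chi^\lambda_{\{f_1,\dots,f_r\}}$. These do vanish at $J_\mu(x)$ unless all parts of $\lambda$ are divisible by $a$, and their values there are given directly by Green polynomials $Q^\mu_{\lambda'}(q^a)$. The sum over the Frobenius-orbit data $\{f_1,\dots,f_r\}$ then collapses (via Fourier inversion on each $\multiplicativegroup{\finiteFieldExtension{n_j}}$) to a product of scalar twisted Kloosterman sums, yielding
\[
\ExoticKloosterman(\alpha,\fieldCharacter,J_\mu(x))=(-1)^{(k-1)n}q^{(k-1)\binom{n}{2}}\sum_{\lambda\vdash b}\frac{Q^\mu_\lambda(q^a)}{z_\lambda}\,p_\lambda(\omega_1,\dots,\omega_k),
\]
with \emph{power sums} $p_\lambda$, not monomials $m_\lambda$. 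The identification with $\mathrm{\tilde H}_\mu$ then follows from the standard power-sum expansion of modified Hall--Littlewood polynomials; the monomial expansion in the statement is a separate identity, not the one that arises from the computation.
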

Given this result, the polynomials studied by \Erdelyi{} and \Toth{} are the well studied modified Hall--Littlewood polynomials\footnote{Expressed in terms of Dickson polynomials, see \cite[Section 3]{curtis1999unitary} or \cite[Section 4.4]{Zelingher2023}.}, and the recursive algorithm that \Erdelyi{} and \Toth{} give for these polynomials is similar to algorithms for computing versions of Hall--Littlewood polynomials. Compare for example \cite[Theorem 5.1]{erdelyi2021matrix} and \cite[Formula (18)]{DesarmenienLeclercThibon1994}.

The proof of this theorem is similar to the proof of the previous one. We still make use of Kondo's formula, but this time we do not use the basis of characters of irreducible representations of $\GL_n\left(\finiteField\right)$. Instead, we use a different basis for the space of class functions on $\GL_n\left(\finiteField\right)$. This basis consists of Green's \emph{basic characters} \cite{Green55}, and it is more convenient for our application. Although \Cref{thm:main-thm-hall-littlewood} is a generalization of \Cref{thm:main-theorem}, we decided to keep the proof of \Cref{thm:main-theorem}, as it is simpler, and it allows the reader who is interested exclusively in the conjecture of \Erdelyi{} and \Toth{} to read a short proof of it. Another reason for leaving both proofs is to demonstrate that even though it is more natural to use the basis of characters of irreducible representations for the space of class functions of $\GL_n\left(\finiteField\right)$, other bases are sometimes good for certain applications.

In the special case where $\mu = \left(b\right)$, our result expresses matrix Kloosterman sums of regular Jordan matrices as traces of symmetric powers of twisted Kloosterman sheaves. The result in this special case is presented in the following theorem (\Cref{cor:symmetric-powers}). This was shown by \Erdelyi{} and \Toth{} in \cite[Section 5E]{erdelyi2021matrix} for the special case $a = 1$, $k = 2$ and $\alpha = 1$.
\begin{theorem}\label{thm:main-thm-symmetric-powers}
	For any finite field $\finiteField$, any character $\alpha \colon \left(\multiplicativegroup{\finiteField}\right)^k \to \multiplicativegroup{\cComplex}$, any $n = ab$, and any regular elliptic $x \in \GL_a\left(\finiteField\right)$ with eigenvalues $\{\xi, \xi^q, \dots, \xi^{q^{a-1}}\}$, we have the identity
	$$ \ExoticKloosterman\left(\alpha, \fieldCharacter, J_{\left(b\right)}\left(x\right) \right) = \left(-1\right)^{\left(k-1\right)n} q^{\left(k-1\right)\binom{n}{2}} \trace\left(\Frobenius\mid_{\xi}, \operatorname{Sym}^b \ExoticKloosterman^{\finiteFieldExtension{a}}\left(\alpha, \fieldCharacter\right)_{\xi} \right),$$
	where $\ExoticKloosterman^{\finiteFieldExtension{a}}\left(\alpha, \fieldCharacter\right)$ is the twisted Kloosterman sheaf defined over $\finiteFieldExtension{a}$, associated with $\alpha$ and $\fieldCharacter$ (see \Cref{subsec:roots-of-frobenius}).
\end{theorem}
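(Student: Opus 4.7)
The plan is to deduce \Cref{thm:main-thm-symmetric-powers} directly from \Cref{thm:main-thm-hall-littlewood} by specializing to $\mu = \left(b\right)$ and identifying the resulting modified Hall--Littlewood polynomial with a trace on a symmetric power. Applying \Cref{thm:main-thm-hall-littlewood} with this choice of $\mu$ gives
$$\ExoticKloosterman\left(\alpha, \fieldCharacter, J_{\left(b\right)}\left(x\right)\right) = \left(-1\right)^{\left(k-1\right)n} q^{\left(k-1\right)\binom{n}{2}} \mathrm{\tilde{H}}_{\left(b\right)}\left(\omega_1, \ldots, \omega_k; q^a\right),$$
so it suffices to verify that $\mathrm{\tilde{H}}_{\left(b\right)}\left(\omega_1, \ldots, \omega_k; q^a\right)$ equals $\trace\left(\Frobenius\mid_\xi, \operatorname{Sym}^b \ExoticKloosterman^{\finiteFieldExtension{a}}\left(\alpha, \fieldCharacter\right)_\xi\right)$.

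The key observation is the combinatorial collapse of $\mathrm{\tilde{H}}_{\left(b\right)}$. The matrix $J_{\left(b\right)}\left(\xi\right)$ is a single regular Jordan block, so by a standard fact from linear algebra its invariant subspaces of $\finiteFieldExtension{a}^b$ are precisely the kernels $\ker\left(\left(J_{\left(b\right)}\left(\xi\right) - \xi I\right)^i\right)$ for $i = 0, 1, \ldots, b$, giving exactly one invariant subspace of each dimension. Since the dimensions of the terms of a flag of type $\lambda$ are prescribed, for every $\lambda \vdash b$ there is exactly one flag of type $\lambda$ in $\finiteFieldExtension{a}^b$ fixed by $J_{\left(b\right)}\left(\xi\right)$. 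The defining formula in \Cref{thm:main-thm-hall-littlewood} therefore collapses to
$$\mathrm{\tilde{H}}_{\left(b\right)}\left(x_1, \ldots, x_k; q^a\right) = \sum_{\lambda \vdash b} m_\lambda\left(x_1, \ldots, x_k\right),$$
which is the complete homogeneous symmetric polynomial of degree $b$ in $x_1, \ldots, x_k$.

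To conclude, since $\omega_1, \ldots, \omega_k$ are by definition the eigenvalues of the geometric Frobenius acting on the stalk $\ExoticKloosterman^{\finiteFieldExtension{a}}\left(\alpha, \fieldCharacter\right)_\xi$, the standard identity expressing the complete homogeneous symmetric polynomial of degree $b$ in the eigenvalues of an operator $T$ as $\trace\left(\operatorname{Sym}^b T\right)$ yields the desired equality. There is no serious obstacle beyond invoking \Cref{thm:main-thm-hall-littlewood}; the only genuine content here is the uniqueness of invariant flags under a regular Jordan block, which is classical.
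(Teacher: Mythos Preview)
Your proposal is correct and follows essentially the same route as the paper: specialize \Cref{thm:main-thm-hall-littlewood} (equivalently \Cref{thm:explicit-expression-for-jordan-block-with-hall-littlewood-polynomial}) to $\mu=(b)$, observe that $\mathrm{\tilde{H}}_{(b)}$ is the complete homogeneous symmetric polynomial $h_b$, and identify $h_b(\omega_1,\dots,\omega_k)$ with the trace of Frobenius on $\operatorname{Sym}^b$. Your justification of $\mathrm{\tilde{H}}_{(b)}=h_b$ via uniqueness of invariant subspaces of a regular Jordan block is exactly the argument the paper uses (see the proof of \Cref{cor:upper-bound}, second part).
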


We use our results and the purity results of twisted Kloosterman sheaves to obtain good bounds for twisted matrix Kloosterman sums (\Cref{cor:upper-bound}).
\begin{theorem}\label{thm:upper-bound}
	For any finite field $\finiteField$, any character $\alpha \colon \left(\multiplicativegroup{\finiteField}\right)^k \to \multiplicativegroup{\cComplex}$, any $n = ab$, any regular elliptic $x \in \GL_a\left(\finiteField\right)$, and any $\mu \vdash b$, we have
	$$ \abs{\ExoticKloosterman\left(\alpha, \fieldCharacter, J_\mu\left(x\right) \right)} \le q^{\left(k-1\right) \frac{n^2}{2}} \cdot \#\left\{\mathcal{F} \text{ is a weak flag of length } k \text{ in } \finiteFieldExtension{a}^b \mid J_{\mu}\left(x\right) \mathcal{F} = \mathcal{F} \right\}.$$
	In the special case $\mu = \left(b\right)$, this result reads
	$$ \abs{\ExoticKloosterman\left(\alpha, \fieldCharacter, J_{\left(b\right)}\left(x\right) \right)} \le q^{\left(k-1\right) \frac{n^2}{2}} \binom{b + k -1}{b}.$$
\end{theorem}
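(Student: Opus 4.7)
The plan is to combine Theorem \ref{thm:main-thm-hall-littlewood} with Deligne's purity theorem for the twisted Kloosterman sheaf, and then reduce the desired bound to an elementary combinatorial identity relating strict flags and weak flags. To begin, I would invoke Theorem \ref{thm:main-thm-hall-littlewood} to rewrite
$$\ExoticKloosterman(\alpha, \fieldCharacter, J_\mu(x)) = (-1)^{(k-1)n} q^{(k-1)\binom{n}{2}} \mathrm{\tilde{H}}_\mu(\omega_1, \dots, \omega_k; q^a),$$
so that the task reduces to estimating $|\mathrm{\tilde{H}}_\mu(\omega_1, \dots, \omega_k; q^a)|$. By purity of the twisted Kloosterman sheaf $\ExoticKloosterman^{\finiteFieldExtension{a}}(\alpha, \fieldCharacter)$, which is lisse of rank $k$ on $\multiplicativegroup{\finiteFieldExtension{a}}$ and pure of weight $k-1$ (after Deligne and Katz), the Frobenius eigenvalues at $\xi$ satisfy $|\omega_i| = q^{a(k-1)/2}$ for each $i$.

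Next, I would use the flag expansion
$$\mathrm{\tilde{H}}_\mu(\omega_1, \dots, \omega_k; q^a) = \sum_{\lambda \vdash b} N_\lambda \cdot m_\lambda(\omega_1, \dots, \omega_k)$$
from Theorem \ref{thm:main-thm-hall-littlewood}, where $N_\lambda = \#\{\mathcal{F} \text{ flag of type } \lambda \mid J_\mu(\xi)\mathcal{F} = \mathcal{F}\}$. Since every monomial appearing in $m_\lambda(x_1, \dots, x_k)$ has total degree $|\lambda| = b$, the purity estimate yields $|m_\lambda(\omega_1,\dots,\omega_k)| \le M_\lambda \cdot q^{n(k-1)/2}$, where $M_\lambda$ denotes the number of distinct monomials in $m_\lambda(x_1, \dots, x_k)$.

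The combinatorial core of the argument is the identity
$$\sum_{\lambda \vdash b} N_\lambda \cdot M_\lambda = \#\left\{\mathcal{F} \text{ weak flag of length } k \text{ in } \finiteFieldExtension{a}^b \mid J_\mu(\xi)\mathcal{F} = \mathcal{F}\right\}.$$
This is bijective: $M_\lambda$ equals the number of compositions of $b$ into $k$ non-negative parts whose sorted rearrangement is $\lambda$, and such a composition, together with an invariant strict flag of type $\lambda$, determines a unique invariant weak flag of length $k$ by repeating subspaces at the positions where the composition is zero. Multiplying through and using $\binom{n}{2} + n/2 = n^2/2$ gives the general bound. For the specialization $\mu = (b)$, the regular Jordan block $J_{(b)}(\xi)$ on $\finiteFieldExtension{a}^b$ admits a unique invariant subspace of each dimension $0, 1, \dots, b$, so any invariant weak flag of length $k$ is determined by its non-decreasing dimension sequence $0 = d_0 \le d_1 \le \dots \le d_{k-1} \le d_k = b$, and stars-and-bars counts $\binom{b+k-1}{b}$ such sequences.

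The only non-trivial analytic input is Deligne's purity theorem applied to the twisted Kloosterman sheaf, which is a standard consequence of Katz's work; all remaining steps are routine combinatorial or algebraic manipulations. Accordingly I do not expect a serious obstacle: the theorem is a clean consequence of Theorem \ref{thm:main-thm-hall-littlewood} combined with the purity bound, with the only delicate point being the verification that the naive triangle-inequality estimate, summed over $\lambda$, collapses precisely to the invariant weak flag count.
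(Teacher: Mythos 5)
Your route is the same as the paper's: pass to \Cref{thm:explicit-expression-for-jordan-block-with-hall-littlewood-polynomial} (equivalently the monomial expansion of $\mathrm{\tilde{H}}_{\mu}$), use purity to get $\abs{\omega_i}=q^{a\left(k-1\right)/2}$, apply the triangle inequality, and use $\binom{n}{2}+\frac{n}{2}=\frac{n^2}{2}$; your stars-and-bars count in the case $\mu=\left(b\right)$ also matches the paper's.

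The one step that does not hold as written is the ``bijective'' justification of your combinatorial core $\sum_{\lambda}N_{\lambda}M_{\lambda}=\#\{\text{invariant weak flags of length }k\}$. The map you describe --- pair a weak composition sorting to $\lambda$ with a $J_{\mu}\left(\xi\right)$-invariant flag of type $\lambda$ and insert repeated subspaces at the zero positions --- only ever produces weak flags whose nonzero dimension jumps occur in weakly decreasing order, so it is not a bijection onto all invariant weak flags of length $k$: for $k=2$, $b=3$, $\lambda=\left(2,1\right)$, both compositions $\left(2,1\right)$ and $\left(1,2\right)$ get sent to weak flags of type $\left(2,1\right)$, and no weak flag of type $\left(1,2\right)$ is ever hit. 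The identity is still true, but what it actually requires is that the number of invariant weak flags of a given composition type depends only on the underlying partition and not on the order of the parts; this is a genuine (if classical) input, which the paper takes from \cite[Proposition 2.3]{Ram2023} and which is exactly why it works with the sum over weak compositions $\left(b_1,\dots,b_k\right)$ of $b$ rather than over partitions. Once you replace your claimed bijection by that order-independence statement, your argument is complete and coincides with the paper's proof.
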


The idea for the proofs of \Cref{thm:main-theorem} and \Cref{thm:main-thm-hall-littlewood} comes from a deeper idea in representation theory of $\GL_n\left(\finiteField\right)$. Twisted matrix Kloosterman sums are class functions of $\GL_n\left(\finiteField\right)$, so they can be written as a linear combination of characters of irreducible representations of $\GL_n\left(\finiteField\right)$. What is the coefficient in this linear combination of $\trace \pi$, for an irreducible representation $\pi$? It turns out that the answer is closely related to the tensor product representation gamma factor associated to $\pi$ and the principal series representation associated with the character $\alpha \colon \left(\multiplicativegroup{\finiteField}\right)^k \to \multiplicativegroup{\cComplex}$.

More generally, in a recent work with Oded Carmon \cite{CarmonZelingher2024}, we encountered exotic matrix Kloosterman sums while studying a finite field analog of Ginzburg--Kaplan gamma factors \cite[Appendix A]{kaplan2018}. Our results in \cite{CarmonZelingher2024} translate between properties of gamma factors and properties of matrix Kloosterman sums. For example, the multiplicativity property of gamma factors is closely related to the multiplicativity property of matrix Kloosterman sums discussed above. In the appendix, we explain one of our results from \cite{CarmonZelingher2024}, that relates matrix Kloosterman sums to Speh representations.

Since our results in \cite{CarmonZelingher2024} are very representation theoretic focused, and since researchers with interest in Kloosterman sums are not necessarily interested in representation theory, we decided to write this separate work with the minimum knowledge required to establish these identities. However, throughout the text we remark that using a multiplicativity result we proved in \cite{CarmonZelingher2024} (proved in \cite[Theorem 1.1]{erdelyi2021matrix} for the special case $k = 2$ and $\alpha = 1$), we are able to use our results for Jordan matrices mentioned above to express any matrix Kloosterman sum as a product of modified Hall--Littlewood polynomials evaluated at certain points. We also take this opportunity to write an exposition on the parallelism between conjugacy classes of $\GL_n\left(\finiteField\right)$ and the representation theory of $\GL_n\left(\finiteField\right)$, which is, of course, known to experts.

\subsection{Acknowledgments}

I would like to thank Oded Carmon for many discussions about Speh representations and matrix Kloosterman sums. I would also like to thank Ofir Gorodetsky for encouraging me to write this paper. Finally, I would like to thank the anonymous referee for their careful reading of this manuscript and their comments.

\section{Preliminaries}\label{sec:preliminaries}
Let $\finiteField$ be a finite field with cardinality $q$. Fix an algebraic closure $\algebraicClosure{\finiteField}$, and for any $n \ge 1$, let $\finiteFieldExtension{n}$ be the unique extension of $\finiteField$ of degree $n$ in $\algebraicClosure{\finiteField}$. Let $\charactergroup{n}$ be the character group of $\multiplicativegroup{\finiteFieldExtension{n}}$, consisting of characters $\theta \colon \multiplicativegroup{\finiteFieldExtension{n}} \to \multiplicativegroup{\cComplex}$. If $a \mid n$, let $\FieldNorm{n}{a} \colon \multiplicativegroup{\finiteFieldExtension{n}} \to \multiplicativegroup{\finiteFieldExtension{a}}$ be the norm map. If $\fieldCharacter \colon \finiteField \to \multiplicativegroup{\cComplex}$ is an additive character, we denote $\fieldCharacter_n = \fieldCharacter \circ \FieldTrace_{\finiteFieldExtension{n} \slash \finiteField}$.

\subsection{Twisted Kloosterman sums}\label{sec:twisted-kloosterman-sums}
Let $\fieldCharacter \colon \finiteField \to \multiplicativegroup{\cComplex}$ be a non-trivial additive character. Let $\alpha = \left(\multiplicativegroup{\finiteField}\right)^k \to \multiplicativegroup{\cComplex}$ be a character. Write $\alpha 
 = \alpha_1 \times \dots \times \alpha_k$, where $\alpha_1, \dots, \alpha_k \colon \multiplicativegroup{\finiteField} \to \multiplicativegroup{\cComplex}$ are characters.
For any $n \ge 1$, and any $\xi \in \multiplicativegroup{\finiteFieldExtension{n}}$, we define the \emph{twisted Kloosterman sum} $$\ExoticKloosterman_n\left(\alpha, \fieldCharacter, \xi\right) = \sum_{\substack{t_1,\dots,t_k \in \multiplicativegroup{\finiteFieldExtension{n}}\\
		\prod_{j=1}^k t_j = \xi}} \left(\prod_{i=1}^k \alpha_i \left(\FieldNorm{n}{1}\left(t_i \right)\right)\right) \fieldCharacter_n \left(\sum_{i=1}^k t_i\right).$$
	Notice that this sum is constant on Frobenius orbits of $\multiplicativegroup{\finiteFieldExtension{n}}$, i.e., $\ExoticKloosterman_n\left(\alpha, \fieldCharacter, \xi \right) = \ExoticKloosterman_n\left(\alpha, \fieldCharacter, \xi^q \right)$, as the trace and the norm maps are constant on Frobenius orbits.
	
	For any $m \ge 1$, and any $x \in \GL_m\left(\finiteField\right)$, we define the \emph{twisted matrix Kloosterman sum} $$\ExoticKloosterman \left(\alpha, \fieldCharacter, x\right) = \sum_{\substack{g_1,\dots,g_k \in \GL_m\left(\finiteField\right)\\
g_1 \cdot g_2 \cdot \dots \cdot g_k = x}} \left(\prod_{i=1}^k \alpha_i \left(\det g_i \right)\right) \fieldCharacter \left(\trace \left(\sum_{i=1}^k g_i\right) \right).$$
Notice that if $x, y \in \GL_m\left(\finiteField\right)$ are conjugate, then $\ExoticKloosterman\left(\alpha, \fieldCharacter, x\right) = \ExoticKloosterman\left(\alpha, \fieldCharacter, y\right)$, as one can change variables in the sum, using the fact that trace and determinant are invariant under conjugation. This definition does not depend on the order of the characters $\alpha_1, \dots, \alpha_k$ either, as one can conjugate any variable by a product of the others. For example, by replacing $g_2$ by $g_1^{-1} h_2 g_1$, we get a sum over the condition $h_2 g_1 g_3 \dots g_k = x$. For a different example, by replacing $g_3$ by $g_2^{-1} g_1^{-1} h_3 g_1 g_2$, we get a sum over the condition $h_3 g_1 g_2 g_4 \dots g_k = x$. As a result, the definition is invariant under permutation of the variables $g_1, \dots, g_k$, and therefore we may write $$\ExoticKloosterman \left(\alpha, \fieldCharacter, x\right) = \sum_{\substack{g_1,\dots,g_k \in \GL_m\left(\finiteField\right)\\
		\prod_{j=1}^k g_j = x}} \left(\prod_{i=1}^k \alpha_i \left(\det g_i \right)\right) \fieldCharacter \left(\trace \left(\sum_{i=1}^k g_i\right) \right)$$
without specifying the order of the product.

In \cite{CarmonZelingher2024}, we prove the following multiplicativity property. It was proved in \cite[Theorem 1.1]{erdelyi2021matrix} for the special case $k = 2$ and $\alpha = 1$.
\begin{theorem}\label{thm:multiplicativity-property}
	Let $\alpha \colon \left(\multiplicativegroup{\finiteField}\right)^k \to \multiplicativegroup{\cComplex}$ be a character. If $n = n_1 + n_2$ and $x_1 \in \GL_{n_1}\left(\finiteField\right)$ and $x_2 \in \GL_{n_2}\left(\finiteField\right)$ do not have common eigenvalues over $\algebraicClosure{\finiteField}$, then $$\ExoticKloosterman\left(\alpha, \fieldCharacter, \diag\left(x_1, x_2\right)\right) = q^{\left(k-1\right)n_1 n_2} \ExoticKloosterman\left(\alpha, \fieldCharacter, x_1\right) \ExoticKloosterman\left(\alpha, \fieldCharacter, x_2\right).$$
\end{theorem}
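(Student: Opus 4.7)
My plan is to prove the multiplicativity theorem via the spectral decomposition of matrix Kloosterman sums on $\GL_n(\finiteField)$. Observing that $\ExoticKloosterman(\alpha,\fieldCharacter,\cdot)$ is the $k$-fold convolution of the class functions $g \mapsto \alpha_i(\det g)\fieldCharacter(\trace g)$, and that irreducible characters satisfy $\chi_\pi * \chi_\pi = (\sizeof{\GL_n(\finiteField)}/\dim\pi)\chi_\pi$ with orthogonality between distinct characters, expanding in the character basis gives
$$\ExoticKloosterman(\alpha,\fieldCharacter,x) = \sum_{\pi \in \Irr(\GL_n(\finiteField))} \frac{\prod_{i=1}^k \tau(\pi,\alpha_i,\fieldCharacter)}{\sizeof{\GL_n(\finiteField)}(\dim\pi)^{k-1}}\chi_\pi(x),$$
where $\tau(\pi,\alpha_i,\fieldCharacter) = \sum_g \alpha_i(\det g)\fieldCharacter(\trace g)\overline{\chi_\pi(g)}$ is the non-abelian Gauss sum computed by Kondo.

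Two classical ingredients then reorganize the right-hand side at $x = \diag(x_1,x_2)$ into the claimed product. First, Kondo's Gauss sum is multiplicative under parabolic induction: for $\pi = \Ind{P_{(n_1,n_2)}}{\GL_n}{\sigma_1 \otimes \sigma_2}$, one has $\tau(\pi,\alpha,\fieldCharacter)$ equal to an explicit $q$-power (tracking the dimension of the unipotent radical) times $\tau(\sigma_1,\alpha,\fieldCharacter)\,\tau(\sigma_2,\alpha,\fieldCharacter)$. Second, since $x_1$ and $x_2$ share no eigenvalue over $\algebraicClosure{\finiteField}$, the centralizer of $\diag(x_1,x_2)$ in $\GL_n(\finiteField)$ lies inside the Levi $\GL_{n_1}(\finiteField) \times \GL_{n_2}(\finiteField)$; the Mackey--Harish-Chandra character formula then collapses to a single term, yielding $\chi_\pi(\diag(x_1,x_2)) = \chi_{\sigma_1}(x_1)\chi_{\sigma_2}(x_2)$ for $\pi = \Ind{P_{(n_1,n_2)}}{\GL_n}{\sigma_1\otimes\sigma_2}$, and $\chi_\pi(\diag(x_1,x_2)) = 0$ for any $\pi$ whose Green parameter involves an irreducible polynomial touching both blocks. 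Combining these, the sum over $\pi \in \Irr(\GL_n(\finiteField))$ reorganizes as a double sum over $\sigma_1 \in \Irr(\GL_{n_1}(\finiteField))$ and $\sigma_2 \in \Irr(\GL_{n_2}(\finiteField))$ that factorizes into the product of the spectral expansions of $\ExoticKloosterman(\alpha,\fieldCharacter,x_1)$ and $\ExoticKloosterman(\alpha,\fieldCharacter,x_2)$; careful tracking of the $q$-powers (from Kondo multiplicativity, from the ratio $\sizeof{\GL_n(\finiteField)}/(\sizeof{\GL_{n_1}(\finiteField)}\sizeof{\GL_{n_2}(\finiteField)})$, and from $\dim\pi = \grpIndex{\GL_n}{P}\dim\sigma_1\dim\sigma_2$) yields exactly $q^{(k-1)n_1 n_2}$.

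The main obstacle is the precise bookkeeping of Kondo's Gauss-sum multiplicativity together with all the $q$-powers, which one handles by writing $\tau(\pi,\alpha,\fieldCharacter)$ as a sum over $\GL_n(\finiteField)$ and using the Bruhat decomposition relative to $P_{(n_1, n_2)}$ to factor out the unipotent-radical contribution via additive-character orthogonality. A more elementary alternative, bypassing character theory entirely, is to parametrize each $g_i = \begin{pmatrix} A_i & B_i \\ C_i & D_i \end{pmatrix}$ via its Bruhat cell with respect to $P_{(n_1,n_2)}$, sum over the off-diagonal unipotent coordinates to extract the factor $q^{(k-1)n_1 n_2}$, and invoke the no-common-eigenvalue hypothesis to force the residual Sylvester-type constraints to admit only trivial solutions on the open cell, producing the clean product structure directly.
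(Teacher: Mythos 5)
First, a point of reference: the paper does not actually prove \Cref{thm:multiplicativity-property}; it imports it from the companion work \cite{CarmonZelingher2024} (and from \cite[Theorem 1.1]{erdelyi2021matrix} for $k=2$, $\alpha=1$). So there is no in-paper proof to match against, and your proposal must stand on its own. Its skeleton --- spectral expansion of the $k$-fold convolution, Kondo's formula to factor the Gauss-sum coefficients through the cuspidal support, and a factorization of character values at $\diag\left(x_1,x_2\right)$ --- is the right one and is consistent with the machinery the paper builds in Sections 3--4 (the $q$-power even checks out, since $\binom{n}{2}=\binom{n_1}{2}+\binom{n_2}{2}+n_1n_2$). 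But the central lemma you invoke is false as stated. For $\pi=\Ind{P_{\left(n_1,n_2\right)}}{\GL_n\left(\finiteField\right)}{\sigma_1\bar{\otimes}\sigma_2}$ the induced character formula does \emph{not} collapse to a single term at $\diag\left(x_1,x_2\right)$: the terms are indexed by all $\diag\left(x_1,x_2\right)$-invariant subspaces of dimension $n_1$, and disjointness of the spectra only forces such a subspace to split as $W_1\oplus W_2$ with $W_i$ invariant under $x_i$ --- it need not equal $\finiteField^{n_1}\oplus 0$. Already for $n_1=n_2=1$ one has $\trace\left(\chi_1\circ\chi_2\right)\left(\diag\left(a,b\right)\right)=\chi_1\left(a\right)\chi_2\left(b\right)+\chi_1\left(b\right)\chi_2\left(a\right)$, not $\chi_1\left(a\right)\chi_2\left(b\right)$; and when $x_1$ itself has several primary components the extra terms involve genuine splittings of $x_1$ across the two blocks, which are not absorbed by merely symmetrizing over $\left(\sigma_1,\sigma_2\right)$.

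A second, related gap is the reorganization of $\sum_{\pi\in\Irr\left(\GL_n\left(\finiteField\right)\right)}$ into a double sum over $\Irr\left(\GL_{n_1}\left(\finiteField\right)\right)\times\Irr\left(\GL_{n_2}\left(\finiteField\right)\right)$: parabolic inductions are reducible, distinct pairs can share constituents, and cuspidal representations of $\GL_n\left(\finiteField\right)$ are not constituents of any such induction (they contribute $0$ at $\diag\left(x_1,x_2\right)$, but that needs to be said and proved, e.g.\ via Green's support theorem). The clean way to repair both defects is to abandon the irreducible-character basis in favor of Green's basic characters $\chi_{f_1}\circ\dots\circ\chi_{f_r}$, for which $\circ$ is a free commutative product and whose degeneracy rules (Green's formula) give exactly the factorization of values at block-diagonal elements with disjoint spectra; this is the actual technical content hiding behind your phrase ``the sum reorganizes and factorizes,'' and it is not a formality. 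Your ``elementary alternative'' is too thin to assess: with $k$ matrices each placed in a Bruhat cell relative to $P_{\left(n_1,n_2\right)}$, the assertion that additive-character orthogonality kills everything off the open cell and extracts exactly $q^{\left(k-1\right)n_1n_2}$ is precisely the theorem, not a reduction of it.
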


\subsection{Conjugacy classes of $\GL_n\left(\finiteField\right)$}\label{subsec:conjugacy-classes-of-gln}

An element $x \in \GL_n\left(\finiteField\right)$ is called \emph{regular elliptic} if its characteristic polynomial is irreducible over $\finiteField$. This is equivalent to $x$ having eigenvalues $\{ \xi, \xi^q, \dots, \xi^{q^{n-1}} \}$, where $\xi \in \multiplicativegroup{\finiteFieldExtension{n}}$ and $\xi^{q^j} \ne \xi$ for $1 \le j \le n-1$. Such $x$ is conjugate over the algebraic closure $\algebraicClosure{\finiteField}$, to $\diag(\xi, \xi^q, \dots, \xi^{q^{n-1}})$.

Given an element $y \in \GL_d\left(\finiteField\right)$, and $m \ge 1$, let $J_{\left(m\right)}\left(y\right)$ be the matrix $$
	J_{\left(m\right)}\left(y\right) = \begin{pmatrix}
		y & \IdentityMatrix{d}\\
		& y & \IdentityMatrix{d}\\
		& & \ddots & \ddots \\
		& & & y & \IdentityMatrix{d}\\
		& & & & y 
	\end{pmatrix} \in \GL_{dm}\left(\finiteField\right).$$
If $y \in \GL_d\left(\finiteField\right)$ is regular elliptic with eigenvalues $\{\eta,\eta^q,\dots,\eta^{q^{d-1}}\}$ in $\multiplicativegroup{\finiteFieldExtension{d}}$, then $J_{\left(m\right)}\left(y\right)$ is conjugate over the algebraic closure $\algebraicClosure{\finiteField}$ to $\diag(J_{\left(m\right)}^{\finiteFieldExtension{d}} \left(\eta\right),J_{\left(m\right)}^{\finiteFieldExtension{d}} \left(\eta^q\right),\dots,J_{\left(m\right)}^{\finiteFieldExtension{d}} (\eta^{q^{d-1}}))$, where
$$J_{\left(m\right)}^{\finiteFieldExtension{d}} \left(\eta\right) = \begin{pmatrix}
	\eta & 1\\
	& \eta & 1\\
	& & \ddots & \ddots\\
	& & & \eta & 1\\
	& & & & \eta
\end{pmatrix} \in \GL_{m}\left(\finiteFieldExtension{d}\right).$$

Recall that a \emph{partition} $\lambda = \left(m_1, \dots, m_r\right)$ is a finite weakly decreasing sequence of positive integers $m_1 \ge \dots \ge m_r$. We denote $\sizeof{\lambda} = \sum_{i=1}^r m_i$ and $\ell \left(\lambda\right) = r$. If $m$ is a non-negative integer, we write $\lambda \vdash m$ if $\sizeof{\lambda} = m$. We denote the empty partition of $0$ by $\left(\right)$.

Given a partition $\lambda = \left(m_1,\dots,m_r\right) \vdash m$, and $y \in \GL_d\left(\finiteField\right)$ we denote $J_{\lambda}\left(y\right) = \diag\left(J_{\left(m_1\right)}\left(y\right), \dots, J_{\left(m_r\right)}\left(y\right)\right) \in \GL_{dm}\left(\finiteField\right)$.

Recall that two elements in $\GL_n\left(\finiteField\right)$ are conjugate by a matrix in $\GL_n\left(\algebraicClosure{\finiteField}\right)$ if and only if they are conjugate by a matrix in $\GL_n\left(\finiteField\right)$. Thus, henceforth we will simply say that two elements in $\GL_n\left(\finiteField\right)$ are conjugate, without specifying over which field.

The classification of conjugacy classes of $\GL_n\left(\finiteField\right)$ is the result that for any element $g$, there exist $s \ge 1$, $d_1, \dots ,d_s \ge 1$, regular elliptic elements $y_1 \in \GL_{d_1}\left(\finiteField\right), \dots, y_s \in \GL_{d_s}\left(\finiteField\right)$, and non-empty partitions $\lambda_1, \dots, \lambda_s$, such that $g$ is conjugate to $\diag\left(J_{\lambda_1}\left(y_1\right), \dots, J_{\lambda_s}\left(y_s\right)\right)$, where $\sum_{j=1}^s d_j \sizeof{\lambda_j} = n$, and where $y_1, \dots, y_s$ have mutually disjoint eigenvalues over the algebraic closure $\algebraicClosure{\finiteField}$.
 
An element $x \in \GL_n\left(\finiteField\right)$ is called \emph{regular} if it is conjugate to an element $\diag\left(J_{\lambda_1}\left(y_1\right),\dots, J_{\lambda_s}\left(y_s\right)\right)$ as above, where $\lambda_1 = \left(m_1\right)$, $\dots$, $\lambda_s = \left(m_s\right)$, for $m_1, \dots, m_s \ge 1$, that is, if $x$ is conjugate to $\diag\left(J_{\left(m_1\right)}\left(y_1\right),\dots, J_{\left(m_s\right)}\left(y_s\right)\right)$. An element $x \in \GL_n\left(\finiteField\right)$ is called \emph{semisimple} if it is conjugate to an element $\diag\left(J_{\lambda_1}\left(y_1\right),\dots, J_{\lambda_s}\left(y_s\right)\right)$ as above, where $\lambda_1 = \left(1^{m_1}\right)$, $\dots$, $\lambda_s = \left(1^{m_s}\right)$, that is, if $x$ is conjugate to $\diag\left(y_1, \dots, y_1, \dots, y_s, \dots, y_s\right)$, where $y_j$ appears $m_j$ times.

\subsection{Representation theory of $\GL_n\left(\finiteField\right)$}\label{subsec:representation-theory-of-gln}

Let $\left(n_1, \dots, n_r\right)$ be a composition of $n$, that is, $n_1, \dots, n_r > 0$ and $n_1 + \dots + n_r = n$. We denote $$N_{\left(n_1,\dots,n_r\right)} = \left\{ \begin{pmatrix}
	\IdentityMatrix{n_1} & \ast & \ast & \ast \\
	& \IdentityMatrix{n_2} & \ast & \ast\\
	& & \ddots & \ast \\
	& & & \IdentityMatrix{n_r}
\end{pmatrix} \right\} \text{ and } D_{\left(n_1,\dots,n_r\right)} = \left\{ \diag\left(g_1,\dots,g_n\right) \mid g_j \in \GL_{n_j}\left(\finiteField\right) \right\}.$$
The parabolic subgroup $P_{\left(n_1,\dots,n_r\right)}$ of $\GL_n\left(\finiteField\right)$ corresponding to the composition $\left(n_1,\dots,n_r\right)$ is defined to be the semi-direct product $P_{\left(n_1,\dots,n_r\right)} = D_{\left(n_1,\dots,n_r\right)} \ltimes N_{\left(n_1,\dots,n_r\right)}$.

Given a composition $\left(n_1,\dots,n_r\right)$ of $n$ and representations $\sigma_1, \dots, \sigma_r$ of $\GL_{n_1}\left(\finiteField\right)$, $\dots$, $\GL_{n_r}\left(\finiteField\right)$, respectively, we define the inflation $\sigma_1 \bar{\otimes} \dots \bar{\otimes} \sigma_r$ as the representation of $P_{\left(n_1,\dots,n_r\right)}$, acting on the space of $\sigma_1 \otimes \dots \otimes \sigma_r$ by $$\left(\sigma_1 \bar{\otimes} \dots \bar{\otimes} \sigma_r\right)\left(du\right) = \sigma_1\left(g_1\right) \otimes \dots \otimes \sigma_r\left(g_r\right),$$
where $d = \diag\left(g_1,\dots,g_r\right) \in D_{\left(n_1,\dots,n_r\right)}$ and $u \in N_{\left(n_1,\dots,n_r\right)}$. The \emph{parabolic induction} of $\sigma_1, \dots, \sigma_r$, denoted by $\sigma_1 \circ \dots \circ \sigma_r$, is a representation of $\GL_n\left(\finiteField\right)$, defined as the following induced representation $$\sigma_1 \circ \dots \circ \sigma_r = \Ind{P_{\left(n_1,\dots,n_r\right)}}{\GL_n\left(\finiteField\right)}{\sigma_1 \bar{\otimes} \dots \bar{\otimes} \sigma_r}.$$
Parabolic induction is commutative and associative. By this we mean that $\sigma_1 \circ \sigma_2$ is isomorphic to $\sigma_2 \circ \sigma_1$, and that both $\left(\sigma_1 \circ \sigma_2\right) \circ \sigma_3$ and $\sigma_1 \circ \left(\sigma_2 \circ \sigma_3\right)$ are isomorphic to $\sigma_1 \circ \sigma_2 \circ \sigma_3$, for any finite dimensional representations $\sigma_1$, $\sigma_2$ and $\sigma_3$ of $\GL_{n_1}\left(\finiteField\right)$, $\GL_{n_2}\left(\finiteField\right)$ and $\GL_{n_3}\left(\finiteField\right)$, respectively. See \cite[Page 17, Section 0.5]{Gelfand70} or \cite{Green55}.

A representation $\pi$ of $\GL_n\left(\finiteField\right)$ is called \emph{cuspidal} if for every composition $\left(n_1,\dots,n_r\right) \ne \left(n\right)$ of $n$, $\pi$ does not admit non-zero $N_{\left(n_1,\dots,n_r\right)}$-invariant vectors, that is, if $v \in \pi$ is an element such that $\pi\left(u\right) v = v$ for every $u \in N_{\left(n_1,\dots,n_r\right)}$, then $v = 0$. If $\pi$ is irreducible, $\pi$ is cuspidal if and only if $\pi$ is not a subrepresentation of $\sigma_1 \circ \dots \circ \sigma_r$ for any $\sigma_1, \dots, \sigma_r$ as above, where $r > 1$.

A character $\theta \colon \multiplicativegroup{\finiteFieldExtension{n}} \to \multiplicativegroup{\cComplex}$ is called \emph{regular} if $\theta^{q^j} \ne \theta$ for every $1 \le j \le n-1$. Equivalently, $\theta$ is regular if its Frobenius orbit $f\left(\theta\right) = \{\theta^{q^j} \mid j \ge 0\}$ is of size $n$. By \cite[Theorem 6.1]{Gelfand70}, irreducible cuspidal representations are in bijection with Frobenius orbits of regular characters $\theta \colon \multiplicativegroup{\finiteFieldExtension{n}} \to \multiplicativegroup{\cComplex}$.

By \cite[Theorem 2.4]{Gelfand70}, for every irreducible representation $\pi$ of $\GL_n\left(\finiteField\right)$, there exist a composition $\left(n_1, \dots, n_r\right)$ of $n$, and irreducible cuspidal representations $\sigma_1, \dots, \sigma_r$ of $\GL_{n_1}\left(\finiteField\right), \dots, \GL_{n_r}\left(\finiteField\right)$, respectively, such that $\pi$ is an irreducible subrepresentation of the parabolic induction $\sigma_1 \circ \dots \circ \sigma_r$. The multiset $\left\{\sigma_1, \dots, \sigma_r\right\}$ is completely determined by $\pi$, and we call it \emph{the cuspidal support of $\pi$}.

We move to describe the parameterization of irreducible representations of $\GL_n\left(\finiteField\right)$. Let $\Irr\left(\GL_n\left(\finiteField\right)\right)$ be the set of all irreducible representations of $\GL_n\left(\finiteField\right)$ (up to isomorphism), and let $\IrrCusp\left(\GL_n\left(\finiteField\right)\right) \subset \Irr\left(\GL_n\left(\finiteField\right)\right)$ denote the subset of irreducible cuspidal representations of $\GL_n\left(\finiteField\right)$. A \emph{parameter} $\varphi$ is an assignment $\left(d, \sigma\right) \mapsto \varphi\left(d, \sigma\right)$, where $d$ is a positive integer, $\sigma \in \IrrCusp\left(\GL_d\left(\finiteField\right)\right)$ and $\varphi\left(d, \sigma\right)$ is a partition, where for almost every $d$ and $\sigma$, $\varphi\left(d, \sigma\right) = \left(\right)$. We say that $\varphi$ is supported on $\left(d,\sigma\right)$ if $\varphi\left(d, \sigma\right) \ne \left(\right)$. For a parameter $\varphi$, we define $$\abs{\varphi} = \sum_{\left( d, \sigma\right)} d \cdot \sizeof{\varphi\left(d, \varphi\right)}.$$
Then by \cite{Green55} or \cite[Section 1]{Macdonald80}, the irreducible representations of $\GL_n\left(\finiteField\right)$ are in bijection with parameters $\varphi$, such that $\abs{\varphi} = n$. Let us discuss this bijection very briefly. Let $\sigma$ be an irreducible cuspidal representation of $\GL_d\left(\finiteField\right)$. For any partition $\lambda \vdash m$, one can define an irreducible representation $\sigma^{\lambda}$ of $\GL_{dm}\left(\finiteField\right)$ by the following steps: define $\sigma^{\left(1\right)} = \sigma$. If $m > 1$, consider the parabolically induced representation $\sigma^{\circ m} = \sigma \circ \dots \circ \sigma$. Then $\sigma^{\circ m}$ is not irreducible, and its irreducible components correspond to irreducible representations of the symmetric group $\SymmetricGroup_m$, which in turn correspond to partitions $\lambda \vdash m$. We denote by $\sigma^{\lambda}$ the irreducible representation corresponding to the partition $\lambda$. See also \cite[Appendix B]{gurevich2021harmonic}. Finally, given a parameter $\varphi$ with $\abs{\varphi} = n$, let $\left(d_j, \sigma_j \right)_{j=1}^s$ be all the elements with $\varphi\left(d_j, \sigma_j\right) \ne \left(\right)$. Then we define a representation $\pi\left(\varphi\right)$ by the parabolic induction
$$\pi\left(\varphi\right) = \sigma_1^{\varphi\left(d_1, \sigma_1\right)} \circ \dots \circ \sigma_s^{\varphi\left(d_s, \sigma_s\right)}.$$
In particular, the cuspidal support of $\pi\left(\varphi\right)$ is the multiset containing exactly $\sizeof{\varphi\left(d_j, \sigma_j\right)}$ copies of $\sigma_j$, for every $1 \le j \le s$.

We conclude by remarking that by choosing a sequence $j = \left(j_n\right)_{n=1}^{\infty}$ of isomorphisms $j_n \colon \multiplicativegroup{\finiteFieldExtension{n}} \to \charactergroup{n}$ for every $n \ge 1$, such that for every $d \mid n$, we have ${j_n}\left(x\right) = j_d\left(x\right) \circ \FieldNorm{n}{d}$, for every $x \in \multiplicativegroup{\finiteFieldExtension{d}}$, we may establish a bijection between conjugacy classes of $\GL_n\left(\finiteField\right)$ and irreducible representations of $\GL_n\left(\finiteField\right)$. Given a cuspidal representation $\sigma$ of $\GL_d\left(\finiteField\right)$ corresponding to the Frobenius orbit $\{ \theta, \theta^q, \dots \theta^{q^{d-1}} \}$, where $\theta \colon \multiplicativegroup{\finiteFieldExtension{d}} \to \multiplicativegroup{\cComplex}$ is a regular character, we define $C_j\left(\sigma\right)$ to be a regular elliptic element of $\GL_d\left(\finiteField\right)$ with eigenvalues $\{ j_d^{-1}\left(\theta\right), j_d^{-1}\left(\theta\right)^q, \dots, j_d^{-1}\left(\theta\right)^{q^{d-1}} \}$ over $\algebraicClosure{\finiteField}$. Given a parameter $\varphi$ with $\sizeof{\varphi} = n$, supported on $\left(d_i, \sigma_i\right)_{i=1}^s$, we define $$C_j\left(\pi\left(\varphi\right)\right) = \diag\left(J_{\varphi\left(d_1, \sigma_1\right)}\left(C_j\left(\sigma_1\right)\right), \dots, J_{\varphi\left(d_s, \sigma_s\right)}\left(C_j\left(\sigma_d\right)\right)\right).$$
The map sending an irreducible representation $\pi$ to the conjugacy class of $C_j\left(\pi\right)$ is a bijection as desired.

\subsection{Non-abelian Gauss sums}\label{subsec:non-abelian-gauss-sums}

Let $\pi$ be an irreducible representation of $\GL_n\left(\finiteField\right)$, and let $\chi \colon \multiplicativegroup{\finiteField} \to \multiplicativegroup{\cComplex}$ be a character. Let $\fieldCharacter \colon \finiteField \to \multiplicativegroup{\cComplex}$ be a non-trivial character. Consider the assignment $g \mapsto \chi\left(\det g\right) \fieldCharacter\left(\trace g\right)$. This assignment is a class function of $\GL_n\left(\finiteField\right)$. Therefore, we may define an operator $\GKGaussSum{\pi}{\chi}{\fieldCharacter} \in \Hom_{\GL_n\left(\finiteField\right)}\left(\pi, \pi\right)$ by the formula $$\GKGaussSum{\pi}{\chi}{\fieldCharacter} = q^{-\frac{n^2}{2}} \sum_{g \in \GL_n\left(\finiteField\right)} \fieldCharacter\left(\trace g\right) \chi\left(\det g\right) \pi\left(g\right).$$
Since $\pi$ is irreducible, by Schur's lemma there exists a constant $\GKGaussSumScalar{\pi}{\chi}{\fieldCharacter} \in \cComplex$, such that $\GKGaussSum{\pi}{\chi}{\fieldCharacter} = \GKGaussSumScalar{\pi}{\chi}{\fieldCharacter} \cdot \idmap_{\pi}$. This scalar was explicitly computed by Kondo \cite{Kondo1963}, see also \cite[Section 2]{Macdonald80} and \cite[Page 288]{macdonald1998symmetric}. We state his result in the following theorem.

\begin{theorem}\label{thm:kondos-formula}
	Let $\pi$ be an irreducible representation of $\GL_n\left(\finiteField\right)$. Suppose that the cuspidal support of $\pi$ is $\left\{\sigma_1,\dots,\sigma_r\right\}$, where for every $1 \le j \le r$, $\sigma_j$ is an irreducible cuspidal representation of $\GL_{n_j}\left(\finiteField\right)$. Suppose that $\sigma_j$ corresponds to the Frobenius orbit $\{\theta_j, \theta_j^q, \dots, \theta_j^{q^{n_j - 1}}\}$, where $\theta_j \colon \multiplicativegroup{\finiteFieldExtension{n_j}} \to \multiplicativegroup{\cComplex}$ is a regular character. Then
	$$ \GKGaussSumScalar{\pi}{\chi}{\fieldCharacter} = \left(-1\right)^n q^{-\frac{n}{2}} \prod_{j=1}^r \GaussSumCharacter{n_j}{\theta_j}{\chi}{\fieldCharacter},$$
	where $\GaussSumCharacter{n_j}{\theta_j}{\chi}{\fieldCharacter}$ is the Gauss sum $$\GaussSumCharacter{n_j}{\theta_j}{\chi}{\fieldCharacter} = -\sum_{t \in \multiplicativegroup{\finiteFieldExtension{n_j}}} \theta_j\left(t\right) \chi\left(\FieldNorm{n_j}{1}\left(t\right)\right) \fieldCharacter_{n_j}\left(t\right).$$
\end{theorem}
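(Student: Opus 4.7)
The plan is to extract the scalar $\GKGaussSumScalar{\pi}{\chi}{\fieldCharacter}$ by tracing the defining identity for $\GKGaussSum{\pi}{\chi}{\fieldCharacter}$. Since this operator equals $\GKGaussSumScalar{\pi}{\chi}{\fieldCharacter} \cdot \idmap_\pi$, taking traces yields
$$ \GKGaussSumScalar{\pi}{\chi}{\fieldCharacter} \cdot \dim \pi = q^{-\frac{n^2}{2}} \sum_{g \in \GL_n\left(\finiteField\right)} \fieldCharacter\left(\trace g\right) \chi\left(\det g\right) \trace \pi\left(g\right). $$
The proof splits into two main ingredients: (a) multiplicativity of the scalar under parabolic induction, reducing the computation to the cuspidal support $\sigma_1, \dots, \sigma_r$; and (b) explicit evaluation when $\pi$ is cuspidal.

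For the generic case of (a), when $\pi = \sigma_1 \circ \dots \circ \sigma_r$ is irreducible with pairwise distinct cuspidals, I would apply Frobenius' formula for the character of an induced representation. Since $f\left(g\right) = \fieldCharacter\left(\trace g\right) \chi\left(\det g\right)$ is a class function, the right-hand side of the displayed identity transforms into $\grpIndex{G}{P} \sum_{y \in P} f\left(y\right) \trace\left(\sigma_1 \bar{\otimes} \dots \bar{\otimes} \sigma_r\right)\left(y\right)$, where $P = P_{\left(n_1, \dots, n_r\right)}$. Writing $y = mu$ with $m = \diag\left(g_1, \dots, g_r\right)$ and $u \in N_{\left(n_1, \dots, n_r\right)}$, one has $\trace y = \sum_j \trace g_j$ and $\det y = \prod_j \det g_j$, so $f$ factors multiplicatively over the Levi blocks, while the summation over $N_{\left(n_1, \dots, n_r\right)}$ contributes $\abs{N_{\left(n_1, \dots, n_r\right)}} = q^{\left(n^2 - \sum_j n_j^2\right)/2}$. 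Combined with $\dim \pi = \grpIndex{G}{P} \prod_j \dim \sigma_j$ and the normalization factor $q^{-n^2/2}$, all powers of $q$ cancel and we obtain $\GKGaussSumScalar{\pi}{\chi}{\fieldCharacter} = \prod_{j=1}^r \GKGaussSumScalar{\sigma_j}{\chi}{\fieldCharacter}$.

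For general $\pi$ with possibly repeated cuspidals, the same character computation applied to the reducible $\rho = \sigma_1 \circ \dots \circ \sigma_r$ shows that $\GKGaussSum{\rho}{\chi}{\fieldCharacter}$ has total trace $\prod_j \GKGaussSumScalar{\sigma_j}{\chi}{\fieldCharacter} \cdot \dim \rho$, which is only an averaged identity over the irreducible constituents. To upgrade to a constituent-wise equality, I would exploit the structure of $\EndomorphismRing_{\GL_n\left(\finiteField\right)}\left(\rho\right)$: when $\rho = \sigma^{\circ m}$, the intertwining operators realize a copy of $\cComplex\left[\SymmetricGroup_m\right]$, and the $\GL_n\left(\finiteField\right)$-equivariant Gauss operator must lie in the center of this algebra. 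A careful refinement - matching the isotypic decomposition under $\SymmetricGroup_m$ with the decomposition into the $\sigma^\lambda$ - shows that the Gauss operator acts by the common scalar $\prod_j \GKGaussSumScalar{\sigma_j}{\chi}{\fieldCharacter}$ on every irreducible component.

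Finally, for ingredient (b), the cuspidal base case, I would establish $\GKGaussSumScalar{\sigma}{\chi}{\fieldCharacter} = -q^{-n/2} \GaussSumCharacter{n}{\theta}{\chi}{\fieldCharacter}$ for $\sigma$ cuspidal attached to the regular orbit of $\theta \colon \multiplicativegroup{\finiteFieldExtension{n}} \to \multiplicativegroup{\cComplex}$ using Green's explicit character formula, which expresses $\trace \sigma$ on each conjugacy class as an alternating sum involving Frobenius conjugates of $\theta$ and Green polynomials. Substituting into the displayed identity and using regularity of $\theta$, the surviving contributions come from the regular semisimple classes parameterized by $\multiplicativegroup{\finiteFieldExtension{n}}$, on which $\fieldCharacter\left(\trace g\right) \chi\left(\det g\right)$ specializes to $\fieldCharacter_n\left(t\right) \chi\left(\FieldNorm{n}{1}\left(t\right)\right)$; the cuspidal dimension $\dim \sigma = \left(q-1\right)\left(q^2-1\right) \cdots \left(q^{n-1}-1\right)$ cancels the orbit-counting factor, yielding the claimed Gauss sum together with the sign $\left(-1\right)^n$ and the power $q^{-n/2}$. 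The hardest part is precisely this cuspidal computation together with the constituent-wise equality in (a); both require delicate bookkeeping with Green's formula (or equivalently the Deligne--Lusztig character formula) and with the signs and normalizations, and are the core of Kondo's original paper.
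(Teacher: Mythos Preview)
The paper does not actually prove this theorem; it is quoted from Kondo's 1963 paper (with further references to Macdonald), so there is no in-paper argument to compare against. Your overall strategy --- reduce to cuspidals via parabolic induction, then compute the cuspidal case directly --- is the natural one, and your computation in (a) for the case where $\sigma_1\circ\cdots\circ\sigma_r$ is irreducible is correct. Two places need more.

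\textbf{Constituent-wise equality.} That the Gauss operator lies in the centre of $\EndomorphismRing_{\GL_n(\finiteField)}(\rho)\cong\cComplex[\SymmetricGroup_m]$ only tells you it acts by a scalar on each isotypic component $\sigma^\lambda$; the centre of $\cComplex[\SymmetricGroup_m]$ has dimension equal to the number of partitions of $m$, so this does \emph{not} force the scalars to coincide, and ``matching the isotypic decomposition'' does not close the gap. A direct route that does work: in the induced model, evaluate the operator on the sections $\varphi_v$ supported on $P$ (so $\varphi_v(p)=\tau(p)v$ and $\varphi_v=0$ off $P$). At the identity you recover $c\,v$ with $c=\prod_j\GKGaussSumScalar{\sigma_j}{\chi}{\fieldCharacter}$, while at $x\notin P$ the inner sum $\sum_{u\in N}\fieldCharacter\left(\trace(x^{-1}mu)\right)$ vanishes, because $\trace(x^{-1}mu)$ is a nonconstant affine function of the entries of $u$ whenever $x^{-1}m\notin P$. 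Hence the operator sends $\varphi_v$ to $c\,\varphi_v$; since $G$-translates of these span $\rho$ and the operator is $G$-equivariant, it equals $c\cdot\idmap_\rho$ on the whole induced module, and therefore on every irreducible constituent.

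\textbf{Cuspidal base case.} Your assertion that ``the surviving contributions come from the regular semisimple classes parametrised by $\multiplicativegroup{\finiteFieldExtension{n}}$'' is not correct. The cuspidal character $\trace\sigma$ is supported on \emph{all} classes $J_\mu(x)$ with $x\in\GL_a(\finiteField)$ regular elliptic, $a\mid n$, $\mu\vdash n/a$, and the contributions from $a<n$ do not vanish. Already for $n=2$ the central classes $\xi\,\IdentityMatrix{2}$ and the Jordan classes $J_{(2)}(\xi)$ (for $\xi\in\multiplicativegroup{\finiteField}$) each contribute a nonzero multiple of $\sum_{\xi\in\multiplicativegroup{\finiteField}}\fieldCharacter(2\xi)\chi(\xi^2)\theta(\xi)$; rather than cancelling, they combine with the regular-elliptic piece (a sum over $\multiplicativegroup{\finiteFieldExtension{2}}\setminus\multiplicativegroup{\finiteField}$) to extend it to a full sum over $\multiplicativegroup{\finiteFieldExtension{2}}$, producing $-\GaussSumCharacter{2}{\theta}{\chi}{\fieldCharacter}$. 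In general one must sum Green's character values over all supported classes, with the correct class sizes and the factors $\Phi_{\ell(\mu)}(q^a)$, and verify that everything assembles into a single Gauss sum over $\multiplicativegroup{\finiteFieldExtension{n}}$. That assembly is the genuine content of Kondo's computation, and your sketch understates what is required.
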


\section{Formula for regular elliptic elements}

This section is dedicated to the proof of our first main theorem, providing a formula for regular elliptic elements. We first write an expansion for $\ExoticKloosterman\left(\alpha, \fieldCharacter, x\right)$ in terms of non-abelian Gauss sums and characters of irreducible representations of $\GL_n\left(\finiteField\right)$. This is done in \Cref{prop:kloosterman-character-identity}. Then, the idea is to substitute a regular elliptic element $x$ in the expansion we obtained. For this, we need to know for which irreducible representations $\pi$ of $\GL_n\left(\finiteField\right)$ the value $\trace \pi\left(x\right)$ is non-zero, and how to compute it in this case. We recall these results in \Cref{subsec:representations-supported-on-regular-elliptic}. Finally, in \Cref{subsec:proof-of-theorem-for-regular-elliptic-elements} we prove \Cref{thm:regular-elliptic-elements}, our theorem regarding regular elliptic elements.

\subsection{A character identity for matrix Kloosterman sums}\label{subsec:character-identity-for-matrix-kloosterman-sums}

In this section, we write an identity that expresses twisted matrix Kloosterman sums in terms of the non-abelian Gauss sums discussed in \Cref{subsec:non-abelian-gauss-sums} and irreducible characters of $\GL_n\left(\finiteField\right)$.

\begin{proposition}\label{prop:kloosterman-character-identity}
	Let $\alpha \colon \left(\multiplicativegroup{\finiteField}\right)^k \to \multiplicativegroup{\cComplex}$ be a character. Write $\alpha = \alpha_1 \times \dots \times \alpha_k$, where $\alpha_1, \dots, \alpha_k \colon \multiplicativegroup{\finiteField} \to \multiplicativegroup{\cComplex}$ are characters. Then for any $x \in \GL_n\left(\finiteField\right)$,
	$$ \ExoticKloosterman\left(\alpha,\fieldCharacter,x\right) = \frac{q^{\frac{kn^2}{2}}}{\sizeof{\GL_n\left(\finiteField\right)}} \sum_{\pi \in \Irr\left(\GL_n\left(\finiteField\right)\right)} \left(\prod_{i=1}^k \GKGaussSumScalar{\pi}{\alpha_i}{\fieldCharacter}\right) \dim \pi \cdot \conjugate{\trace \pi\left(x\right)}.$$
\end{proposition}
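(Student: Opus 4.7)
The plan is to recognize the twisted matrix Kloosterman sum as a $k$-fold convolution of class functions on $\GL_n(\finiteField)$ and then apply the Fourier expansion with respect to irreducible characters.

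First, I would define the class function $f_i \colon \GL_n(\finiteField) \to \cComplex$ by $f_i(g) = \alpha_i(\det g)\fieldCharacter(\trace g)$ for each $1 \le i \le k$. The definition of $\ExoticKloosterman(\alpha,\fieldCharacter,x)$ then reads
$$\ExoticKloosterman(\alpha,\fieldCharacter,x) = \sum_{g_1 \cdots g_k = x} \prod_{i=1}^{k} f_i(g_i) = (f_1 * f_2 * \cdots * f_k)(x),$$
where $(f*h)(x) = \sum_{y} f(y) h(y^{-1}x)$ is the usual convolution. Expanding each $f_i$ as a linear combination of irreducible characters, $f_i = \sum_\pi a_{i,\pi} \trace \pi$, with Fourier coefficients
$$a_{i,\pi} = \frac{1}{|\GL_n(\finiteField)|}\sum_{g \in \GL_n(\finiteField)} \alpha_i(\det g)\fieldCharacter(\trace g)\,\conjugate{\trace \pi(g)}.$$

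Next I would invoke the standard orthogonality relation for convolution of irreducible characters, $\trace \pi * \trace \rho = \delta_{\pi,\rho}\,\frac{|\GL_n(\finiteField)|}{\dim \pi}\,\trace \pi$, iterated $k-1$ times, to obtain
$$\ExoticKloosterman(\alpha,\fieldCharacter,x) = \sum_{\pi \in \Irr(\GL_n(\finiteField))} \left(\prod_{i=1}^k a_{i,\pi}\right) \left(\frac{|\GL_n(\finiteField)|}{\dim \pi}\right)^{k-1}\trace \pi(x).$$

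Now I would connect the Fourier coefficients to Kondo's non-abelian Gauss sums. Using $\conjugate{\trace \pi(g)} = \trace \Contragradient{\pi}(g)$ and applying the definition of $\GKGaussSum{\cdot}{\cdot}{\cdot}$ (taking traces of the scalar operator $\GKGaussSumScalar{\Contragradient{\pi}}{\alpha_i}{\fieldCharacter}\cdot \idmap_{\Contragradient{\pi}}$ from Schur's lemma), one gets
$$a_{i,\pi} = \frac{q^{n^2/2}\,\dim \pi}{|\GL_n(\finiteField)|}\,\GKGaussSumScalar{\Contragradient{\pi}}{\alpha_i}{\fieldCharacter}.$$
Plugging this into the previous display and collecting powers of $q$, $|\GL_n(\finiteField)|$ and $\dim \pi$ yields
$$\ExoticKloosterman(\alpha,\fieldCharacter,x) = \frac{q^{kn^2/2}}{|\GL_n(\finiteField)|}\sum_{\pi} \left(\prod_{i=1}^k \GKGaussSumScalar{\Contragradient{\pi}}{\alpha_i}{\fieldCharacter}\right) \dim \pi \cdot \trace \pi(x).$$

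Finally, I would reindex the sum by replacing $\pi$ with $\Contragradient{\pi}$, which is a bijection on $\Irr(\GL_n(\finiteField))$ preserving dimension, and use $\trace \Contragradient{\pi}(x) = \conjugate{\trace \pi(x)}$ to arrive at the claimed identity. The main bookkeeping obstacle is tracking the normalizing factors $q^{n^2/2}$, the $|\GL_n(\finiteField)|$'s, and the powers of $\dim \pi$ correctly through the convolution, and properly accounting for the contragredient arising from complex conjugation of characters; once those are handled the statement follows cleanly.
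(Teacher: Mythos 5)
Your proof is correct: every step checks out, including the normalization bookkeeping (the factors $\left(q^{n^2/2}\dim\pi/\sizeof{\GL_n\left(\finiteField\right)}\right)^k$ from the Fourier coefficients against $\left(\sizeof{\GL_n\left(\finiteField\right)}/\dim\pi\right)^{k-1}$ from the iterated convolution do collapse to the stated prefactor) and the contragredient reindexing at the end. The underlying idea is the same as the paper's — both proofs rest on the fact that the class function $g\mapsto\alpha_i\left(\det g\right)\fieldCharacter\left(\trace g\right)$ acts on each irreducible $\pi$ by the Schur scalar $q^{n^2/2}\,\GKGaussSumScalar{\pi}{\alpha_i}{\fieldCharacter}$ — but the execution differs in which orthogonality statement carries the argument. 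You work in the convolution algebra of class functions, expand each factor in the character basis, and invoke the identity $\trace\pi\ast\trace\rho=\delta_{\pi,\rho}\,\frac{\sizeof{\GL_n\left(\finiteField\right)}}{\dim\pi}\trace\pi$; the price is that complex conjugation in the Fourier coefficients forces you to track $\Contragradient{\pi}$ and reindex at the end. The paper instead takes the trace of the operator identity $\prod_i\GKGaussSum{\pi}{\alpha_i}{\fieldCharacter}=\left(\prod_i\GKGaussSumScalar{\pi}{\alpha_i}{\fieldCharacter}\right)\idmap_\pi$, multiplies by $\conjugate{\trace\pi\left(x\right)}$, sums over $\pi$ using the second (column) orthogonality relation to isolate the conjugacy class of $x$, and finishes with orbit--stabilizer; this avoids the contragredient entirely. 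Your route is arguably cleaner for someone who thinks of $\ExoticKloosterman\left(\alpha,\fieldCharacter,\cdot\right)$ as a convolution (which is also the viewpoint that makes the multiplicativity property in Theorem 2.1 transparent), while the paper's route keeps all manipulations at the level of a single fixed $\pi$ until the last step.
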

\begin{proof}
	Let $\pi$ be an irreducible representation of $\GL_n\left(\finiteField\right)$. By applying the operators $\GKGaussSum{\pi}{\alpha_1}{\fieldCharacter}$, $\dots$, $\GKGaussSum{\pi}{\alpha_k}{\fieldCharacter}$ repeatedly, we have the identity
	$$\left(\prod_{i=1}^k \GKGaussSumScalar{\pi}{\alpha_i}{\fieldCharacter} \right) \cdot \idmap_{\pi} = q^{-\frac{kn^2}{2}} \sum_{g_1, \dots, g_k \in \GL_n\left(\finiteField\right)} \fieldCharacter\left(\trace \sum_{i=1}^k g_i\right) \left(\prod_{i=1}^k\alpha_i\left(\det g_i\right)\right) \pi\left( g_1 \cdot \dots \cdot g_k \right).$$
	Taking the trace of both sides, we get
	\begin{equation}\label{eq:trace-of-product-of-non-abelian-gauss-sum}
		\begin{split}
			& q^{-\frac{kn^2}{2}} \sum_{g_1, \dots, g_k \in \GL_n\left(\finiteField\right)} \fieldCharacter\left(\trace \sum_{i=1}^k g_j\right) \left(\prod_{i=1}^k\alpha_i\left(\det g_i\right)\right) \trace \pi\left( g_1 \cdot \dots \cdot g_k \right) \\
			=& \left(\prod_{i=1}^k \GKGaussSumScalar{\pi}{\alpha_i}{\fieldCharacter} \right) \cdot \dim \pi.
		\end{split}
	\end{equation}
	Multiplying both sides by $\conjugate{\trace \pi\left(x\right)}$ and summing over $\pi \in \Irr\left(\GL_n\left(\finiteField\right)\right)$, we get, by Schur's orthogonality relations
	\begin{align*}
		& \sizeof{C_{\GL_n\left(\finiteField\right)}\left(x\right)} q^{-\frac{kn^2}{2}} \sum_{\substack{g_1, \dots, g_k \in \GL_n\left(\finiteField\right)\\
				g_1 \cdot \dots \cdot g_k \sim x}} \fieldCharacter\left(\trace \sum_{i=1}^k g_j\right) \left(\prod_{i=1}^k\alpha_i\left(\det g_j\right)\right) \\
			=& \sum_{\pi \in \Irr\left(\GL_n\left(\finiteField\right)\right)} \left(\prod_{i=1}^k \GKGaussSumScalar{\pi}{\alpha_i}{\fieldCharacter} \right) \cdot \dim \pi \cdot \conjugate{\trace {\pi\left(x\right)}},
	\end{align*}
	where $C_{\GL_n\left(\finiteField\right)}\left(x\right)$ is the the centralizer of $x$ in $\GL_n\left(\finiteField\right)$, and for $h_1, h_2 \in \GL_n\left(\finiteField\right)$, we write $h_1 \sim h_2$ if $h_1$ and $h_2$ are conjugate.
	Hence, we have \begin{equation}\label{eq:kloosterman-identity-before-orbit-stabilizer-lemma}
		\begin{split}
			\sizeof{C_{\GL_n\left(\finiteField\right)}\left(x\right)} q^{-\frac{kn^2}{2}} \sum_{y \in \left[x\right]} \ExoticKloosterman\left(\alpha, \fieldCharacter, y\right) &= \sum_{\pi \in \Irr\left(\GL_n\left(\finiteField\right)\right)} \left(\prod_{i=1}^k \GKGaussSumScalar{\pi}{\alpha_i}{\fieldCharacter} \right) \cdot \dim \pi \cdot \conjugate{\trace {\pi\left(x\right)}},
		\end{split}
	\end{equation}
	where $\left[x\right]$ is the conjugacy class of $x$ in $\GL_n\left(\finiteField\right)$. Since $\ExoticKloosterman\left(\alpha, \fieldCharacter, y\right) = \ExoticKloosterman\left(\alpha, \fieldCharacter, x\right)$ for any $y$ conjugate to $x$, we have that the left hand side of \eqref{eq:kloosterman-identity-before-orbit-stabilizer-lemma} is $\sizeof{C_{\GL_n\left(\finiteField\right)}\left(x\right)} \cdot \sizeof{\left[x\right]} \cdot \ExoticKloosterman\left(\alpha, \fieldCharacter, x\right)$, which by the orbit-stabilizer theorem is $\sizeof{\GL_n\left(\finiteField\right)} \cdot \ExoticKloosterman\left(\alpha, \fieldCharacter, x\right)$. The result now follows.
\end{proof}

\subsection{Representations supported on regular elliptic elements}\label{subsec:representations-supported-on-regular-elliptic}
Our idea is to use the identity from \Cref{subsec:character-identity-for-matrix-kloosterman-sums} for a regular elliptic element $x \in \GL_n\left(\finiteField\right)$. In order to do that, we need to know for which $\pi$, the value $\trace \pi\left(x\right)$ does not vanish, and how to compute this value. These are questions that can be answered by Green's formulas for the irreducible characters of $\GL_n\left(\finiteField\right)$ \cite{Green55}. Fortunately, these questions have been answered completely by Graham Gordon in his doctoral thesis \cite{Gordon2022}.

\begin{theorem}\label{thm:support-of-elliptic-elements}
	Let $x \in \GL_n\left(\finiteField\right)$ be a regular elliptic element. Suppose that the roots of the characteristic polynomial of $x$ are $\xi, \xi^q, \dots, \xi^{q^{n-1}}$. Let $\pi$ be an irreducible representation of $\GL_n\left(\finiteField\right)$, and suppose that $\trace \pi\left(x\right) \ne 0$. Then
	\begin{enumerate}
		\item (\cite[Corollary 3.7]{Gordon2022}) There exist integers $a,b,r$, such that $ab = n$, $0 \le r \le b-1$, and an irreducible cuspidal representation $\sigma$ of $\GL_a\left(\finiteField\right)$, such that $\pi$ is the irreducible representation of $\GL_n\left(\finiteField\right)$ corresponding to the parameter supported on $\left(a, \sigma\right)$ with value $\left(b-r, 1^r\right)$ at $\left(a, \sigma\right)$, i.e., $\pi = \sigma^{\left(b-r, 1^{r}\right)}$.
		\item (\cite[Corollary 3.7]{Gordon2022}) $$\dim \sigma^{\left(b-r, 1^{r}\right)} = q^{a \binom{r+1}{2}} \cdot \frac{\prod_{j=1}^{ab}\left(q^j - 1\right)}{\prod_{j=1}^b \left(q^{aj}-1\right)} \cdot \binom{b-1}{r}_{q^a},$$
		where $\binom{n}{k}_q$ the $q$-binomial.
		\item (\cite[Theorem 1.7]{Gordon2022} and \cite[Corollary 2.3]{Gordon2022}) Suppose that $\sigma$ corresponds to the Frobenius orbit $\{\theta, \theta^q, \dots, \theta^{q^{a-1}} \}$, where $\theta \colon \multiplicativegroup{\finiteFieldExtension{a}} \to \multiplicativegroup{\cComplex}$ is a regular character. Then $$\trace \sigma^{\left(b-r, 1^{r}\right)} \left(x\right) = \left(-1\right)^{b\left(a-1\right)} \left(-1\right)^r \frac{1}{b} \sum_{j=0}^{ab - 1} \theta\left(\FieldNorm{ab}{a}\left(\xi^{q^j}\right)\right).$$
	\end{enumerate}
\end{theorem}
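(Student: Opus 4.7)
The plan is to derive all three assertions from Green's explicit character formula for $\GL_n\left(\finiteField\right)$, in the form developed in Gordon's thesis. The starting point is Green's expression of $\trace \pi\left(x\right)$ for any irreducible $\pi$ and any $x$ as a sum indexed by decompositions of the Frobenius-orbit data of $x$ that are compatible with the parameter of $\pi$, weighted by Green's polynomials $Q_{\lambda}^{\rho}\left(q^{d}\right)$. For a regular elliptic $x$, the Frobenius-orbit data of the eigenvalues consists of a single orbit of size $n$, so only terms with parameter $\varphi$ supported on a single cuspidal $\sigma \in \IrrCusp\left(\GL_a\left(\finiteField\right)\right)$ with $a \mid n$ can contribute, forcing $n = ab$ and $\pi = \sigma^{\lambda}$ for some $\lambda \vdash b$. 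The further restriction that $\lambda$ must be a hook $\left(b-r, 1^{r}\right)$ is the combinatorial heart of (1): after reduction it amounts to the vanishing of $Q_{\lambda}^{\left(b\right)}\left(q^{a}\right)$ unless $\lambda$ is a hook, which in turn is the classical identity that the Schur function $s_{\lambda}$ evaluated at a single power-sum $p_{b}$ vanishes unless $\lambda$ is a hook and equals $\pm 1$ otherwise.

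For part (2), once the parameter is known to be a hook, the dimension of $\sigma^{\left(b-r, 1^{r}\right)}$ would follow by combining the standard parabolic-induction degree formula applied to $\sigma^{\circ b}$, indexing of its components by representations of $\SymmetricGroup_b$, and the hook-dimension formula for irreducibles of $\SymmetricGroup_{b}$, suitably $q^{a}$-deformed. The factor $q^{a \binom{r+1}{2}}$ records the length of the longest Weyl element in the relevant parabolic, while $\binom{b-1}{r}_{q^{a}}$ is the $q^{a}$-analog of the dimension $\binom{b-1}{r}$ of the hook representation of $\SymmetricGroup_{b}$.

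For part (3), the character value on a regular elliptic $x$ is obtained by specializing Green's formula to a hook parameter. Green's polynomial then collapses to a sign $\left(-1\right)^{r}$, with an overall $\left(-1\right)^{b\left(a-1\right)}$ tracking the parity of the anisotropic rank, while the averaged sum $\frac{1}{b}\sum_{j=0}^{ab-1} \theta\left(\FieldNorm{ab}{a}\left(\xi^{q^{j}}\right)\right)$ records the contribution of the elliptic torus, in the spirit of a Deligne--Lusztig character value attached to the norm $\theta \circ \FieldNorm{ab}{a}$ on $\multiplicativegroup{\finiteFieldExtension{ab}}$. The main obstacle in executing this plan is verifying cleanly the combinatorial identities for Green's polynomials evaluated on a single-cycle type against a hook partition; once those identities are established, the remainder of each step reduces to bookkeeping involving parabolic induction, norms on tori, and symmetric-group character theory, all of which are classical. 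Since these identities are precisely what Gordon works out, the cleanest exposition is to quote his results directly, as is done in the statement.
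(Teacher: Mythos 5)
The paper gives no proof of this theorem: all three parts are quoted directly from Gordon's thesis (and ultimately rest on Green's character formula), exactly as you conclude at the end of your proposal. Your sketch of the underlying derivation --- reduction to a single cuspidal support via the single Frobenius orbit of eigenvalues, the hook restriction coming from the vanishing of the relevant Green polynomial (equivalently, of $\chi^{\lambda}$ of $\SymmetricGroup_b$ at a $b$-cycle unless $\lambda$ is a hook), the generic-degree formula for part (2), and the Deligne--Lusztig-flavored elliptic-torus sum for part (3) --- is a correct account of how the cited results are established, so your treatment matches the paper's.
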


\subsection{Proof of the theorem for regular elliptic elements}\label{subsec:proof-of-theorem-for-regular-elliptic-elements}
We are ready to prove our first main theorem.
\begin{theorem}\label{thm:regular-elliptic-elements}
	For any character $\alpha \colon \left(\multiplicativegroup{\finiteField}\right)^k \to \multiplicativegroup{\cComplex}$ and any regular elliptic element $\xi \in \GL_n\left(\finiteField\right)$ with eigenvalues $\{\xi,\xi^q,\dots,\xi^{q^{n-1}}\}$, where $\xi \in \multiplicativegroup{\finiteFieldExtension{n}}$, we have $$ \ExoticKloosterman\left(\alpha, \fieldCharacter, x\right) = \left(-1\right)^{\left(n + 1\right)\left(k - 1\right)} q^{\left(k-1\right)\binom{n}{2}} \ExoticKloosterman_n\left(\alpha, \fieldCharacter, \xi\right).$$
\end{theorem}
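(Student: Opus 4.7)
The strategy is to substitute the regular elliptic $x$ into the character identity of \Cref{prop:kloosterman-character-identity} and show that the resulting sum over $\Irr(\GL_n(\finiteField))$ collapses, via Fourier inversion on $\widehat{\multiplicativegroup{\finiteFieldExtension{n}}}$, to the twisted Kloosterman sum on the right-hand side. By \Cref{thm:support-of-elliptic-elements}, only representations of the form $\pi = \sigma^{(b-r, 1^{r})}$ contribute, where $n = ab$, $0 \le r \le b-1$, and $\sigma$ is an irreducible cuspidal representation of $\GL_a(\finiteField)$ attached to a Frobenius orbit of a regular character $\theta \colon \multiplicativegroup{\finiteFieldExtension{a}} \to \multiplicativegroup{\cComplex}$. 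Because the cuspidal support of such a $\pi$ consists of $b$ copies of $\sigma$, Kondo's formula (\Cref{thm:kondos-formula}) gives
$$\prod_{i=1}^{k} \GKGaussSumScalar{\pi}{\alpha_i}{\fieldCharacter} = (-1)^{kn} q^{-kn/2} \prod_{i=1}^{k} \GaussSumCharacter{a}{\theta}{\alpha_i}{\fieldCharacter}^{b},$$
which is manifestly independent of $r$.

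Next, I would isolate the $r$-dependent part of $\dim \pi \cdot \conjugate{\trace \pi(x)}$, which is $(-1)^{r} q^{a\binom{r+1}{2}} \binom{b-1}{r}_{q^{a}}$. Setting $Q := q^{a}$ and applying the $q$-binomial theorem $\prod_{i=0}^{b-2}(1 + Q^{i} t) = \sum_{r=0}^{b-1} Q^{\binom{r}{2}} \binom{b-1}{r}_{Q} t^{r}$ at $t = -Q$ collapses the sum over $r$ to $\prod_{j=1}^{b-1}(1 - q^{aj})$. Together with the $r$-independent factor $\prod_{j=1}^{n}(q^{j}-1) / \prod_{j=1}^{b}(q^{aj}-1)$ from the dimension, the sign $(-1)^{b(a-1)}$ and the $1/b$ from the character value, and the denominator $\sizeof{\GL_n(\finiteField)}$ in \Cref{prop:kloosterman-character-identity}, the normalization simplifies to a clean sum indexed by pairs $(a, \theta)$ with $a \mid n$ and $\theta$ a regular character of $\multiplicativegroup{\finiteFieldExtension{a}}$ (modulo Frobenius orbits).

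The final step is to re-index this double sum as a single sum over $\chi \in \widehat{\multiplicativegroup{\finiteFieldExtension{n}}}$: each character $\chi$ arises uniquely as $\chi = \theta \circ \FieldNorm{n}{a}$ for a unique divisor $a \mid n$ (the size of its Frobenius orbit) and a regular $\theta$ on $\multiplicativegroup{\finiteFieldExtension{a}}$. The Hasse--Davenport relation identifies $\GaussSumCharacter{a}{\theta}{\alpha_i}{\fieldCharacter}^{b}$ with $\GaussSumCharacter{n}{\chi}{\alpha_i}{\fieldCharacter}$, while the trace identity $\frac{1}{b}\sum_{j=0}^{n-1}\theta(\FieldNorm{n}{a}(\xi^{q^{j}})) = \sum_{\eta \in f(\theta)} \eta(\FieldNorm{n}{a}(\xi))$ converts the character-value factor into $\chi(\xi)$ summed across its Frobenius orbit. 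The combined expression becomes
$$\ExoticKloosterman(\alpha,\fieldCharacter,x) = C \cdot \sum_{\chi \in \widehat{\multiplicativegroup{\finiteFieldExtension{n}}}} \conjugate{\chi(\xi)} \prod_{i=1}^{k} \GaussSumCharacter{n}{\chi}{\alpha_i}{\fieldCharacter},$$
and Fourier inversion on $\multiplicativegroup{\finiteFieldExtension{n}}$ recognizes the right-hand side, up to the constant $C$, as $\ExoticKloosterman_n(\alpha, \fieldCharacter, \xi)$.

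The main obstacle is the careful bookkeeping of signs and powers of $q$: each of Kondo's formula, the $q$-binomial collapse, Hasse--Davenport, and Fourier inversion contributes its own sign and $q$-power, and matching the final normalization to precisely $(-1)^{n+k-1} q^{(k-1)\binom{n}{2}}$ — rather than an off-by-one variant — requires disciplined calculation. Once the arithmetic is executed faithfully, however, the structural identification is immediate from the Fourier-theoretic framework laid out above.
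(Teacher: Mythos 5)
Your proposal follows essentially the same route as the paper's own proof: substitute the regular elliptic element into \Cref{prop:kloosterman-character-identity}, restrict the sum via \Cref{thm:support-of-elliptic-elements}, use Kondo's formula to see the Gauss-sum factor is independent of $r$, collapse the $r$-sum by the $q$-binomial theorem, re-index over all characters of $\multiplicativegroup{\finiteFieldExtension{n}}$ via Hasse--Davenport, and finish with orthogonality of characters. All the key steps are correctly identified, and the remaining work you defer is indeed just the sign and $q$-power bookkeeping that the paper carries out explicitly.
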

\begin{proof}
	We will use \Cref{prop:kloosterman-character-identity}. By \Cref{thm:support-of-elliptic-elements}, we may reduce the summation to \begin{equation}\label{eq:reduced-kloosterman-identity}
		\begin{split}
				\ExoticKloosterman\left(\alpha,\fieldCharacter,x\right) =& \frac{q^{\frac{kn^2}{2}}}{\sizeof{\GL_n\left(\finiteField\right)}} \sum_{\substack{a,b\\
					ab = n}} \sum_{\sigma \in \IrrCusp\left(\GL_a\left(\finiteField\right)\right)} \sum_{r = 0}^{b-1} \left(\prod_{i=1}^k \GKGaussSumScalar{\sigma^{\left(b-r,1^r\right)}}{\alpha_i}{\fieldCharacter}\right)\\
			& \times \dim \sigma^{\left(b-r,1^r\right)} \cdot \conjugate{{\trace \sigma^{\left(b-r,1^r\right)}\left(x\right)}}.
		\end{split}
	\end{equation}
	By \Cref{thm:kondos-formula}, we have that for every $0 \le r \le b-1$, \begin{equation}
		\GKGaussSumScalar{\sigma^{\left(b-r,1^r\right)}}{\alpha_i}{\fieldCharacter} = \GKGaussSumScalar{\sigma^{\left(b\right)}}{\alpha_i}{\fieldCharacter}.
	\end{equation}
	We start by evaluating the sum over $r$ in \eqref{eq:reduced-kloosterman-identity}. For any $a \mid n$ and any irreducible cuspidal representation $\sigma$ of $\GL_a\left(\finiteField\right)$, choose a regular character $\theta_{\sigma} \colon \multiplicativegroup{\finiteFieldExtension{a}} \to \multiplicativegroup{\cComplex}$ representing the Frobenius orbit associated to $\sigma$.
	By \Cref{thm:support-of-elliptic-elements}, we have
	\begin{equation}\label{eq:explicit-values-for-elliptic-support}
		\begin{split}
			\sum_{r=0}^{b-1} \dim \sigma^{\left(b-r,1^r\right)} \cdot \conjugate{\trace \sigma^{\left(b-r,1^r\right)}\left(x\right)} = &  \left(-1\right)^{b\left(a-1\right)} \frac{1}{b} \sum_{j=0}^{ab - 1} \theta_{\sigma} \left(\FieldNorm{ab}{a}\left(\xi^{-q^j}\right)\right) \frac{\prod_{j=1}^{ab}\left(q^j - 1\right)}{\prod_{j=1}^b \left(q^{aj}-1\right)} \\
			& \times \sum_{r=0}^{b-1} \left(-1\right)^r q^{a \binom{r+1}{2}} \binom{b-1}{r}_{q^a}.
		\end{split}
	\end{equation}
	Using the $q$-binomial theorem, we have
	\begin{equation}\label{eq:q-binomial-theorem}
		\sum_{r=0}^{b-1} \left(-1\right)^r q^{a \binom{r+1}{2}} \binom{b-1}{r}_{q^a} = \prod_{j=0}^{b-2} \left(1 - q^{a\left(j+1\right)}\right) = \left(-1\right)^{b-1} \prod_{j=1}^{b-1} \left(q^{aj} - 1\right).
	\end{equation}
	Notice that for every $\eta \in \multiplicativegroup{\finiteFieldExtension{ab}}$, we have $\FieldNorm{ab}{a}\left(\eta\right)^{q^a} = \FieldNorm{ab}{a}\left(\eta\right)$. Therefore, we have \begin{equation}\label{eq:reduced-frobenius-orbit}
		\frac{1}{b} \sum_{j=0}^{ab - 1} \theta_{\sigma}\left(\xi^{-q^j}\right) = \sum_{j=0}^{a-1} \theta_{\sigma}^{q^j}\left(\xi^{-1}\right).
	\end{equation}
	Hence, combining \eqref{eq:reduced-kloosterman-identity}-\eqref{eq:reduced-frobenius-orbit} with the facts $\sizeof{\GL_n\left(\finiteField\right)} = q^{\binom{n}{2}} \prod_{j=1}^{n}\left(q^j - 1\right)$ and $\sizeof{\charactergroup{n}} = q^n - 1$, we get that $\ExoticKloosterman\left(\alpha, \fieldCharacter, x\right)$ is given by
	$$\left(-1\right)^{n-1} \frac{q^{\frac{kn^2}{2} - \binom{n}{2}}}{\sizeof{\charactergroup{n}}} \sum_{\substack{a,b\\
			ab = n}} \sum_{\sigma \in \IrrCusp\left(\GL_a\left(\finiteField\right)\right) } \left(\prod_{i=1}^k \GKGaussSumScalar{\sigma^{\left(b\right)}}{\alpha_i}{\fieldCharacter}\right) \sum_{j=0}^{a - 1} \theta_{\sigma}^{q^j}\left(\FieldNorm{ab}{a}\left(\xi^{-1}\right)\right).$$
		We would like to replace the triple sum with a single sum over $\beta \in \charactergroup{n}$. To do so, notice that by \Cref{thm:kondos-formula}, we have
		$$\GKGaussSumScalar{\sigma^{\left(b\right)}}{\alpha_i}{\fieldCharacter} = \left(-1\right)^n \GaussSumCharacter{a}{\theta_{\sigma}}{\alpha_i}{\fieldCharacter}^b$$
		and by the Hasse--Davenport relation, we have
		$$\GKGaussSumScalar{\sigma^{\left(b\right)}}{\alpha_i}{\fieldCharacter} = \left(-1\right)^n q^{-\frac{ab}{2}} \GaussSumCharacter{a}{\theta_{\sigma}}{\alpha_i}{\fieldCharacter}^b = \left(-1\right)^n q^{-\frac{ab}{2}} \GaussSumCharacter{ab}{\theta_{\sigma} \circ \FieldNorm{ab}{a}}{\alpha_i}{\fieldCharacter}.$$
		Hence, $\ExoticKloosterman\left(\alpha, \fieldCharacter, x\right)$ is given by
		\begin{equation}\label{eq:uniform-sum-for-matrix-kloosterman-sum}
			\left(-1\right)^{n\left(k-1\right)-1} \frac{q^{\left(k-1\right)\binom{n}{2}}}{\sizeof{\charactergroup{n}}} \sum_{\substack{a,b\\
					ab = n}} \sum_{\sigma \in \IrrCusp\left(\GL_a\left(\finiteField\right)\right) } \sum_{j=0}^{a - 1} \left(\theta_{\sigma} \circ \FieldNorm{n}{a}\right)^{q^j} \left(\xi^{-1}\right) \left(\prod_{i=1}^k \GaussSumCharacter{n}{\left(\theta_{\sigma} \circ \FieldNorm{n}{a}\right)^{q^j}}{\alpha_i}{\fieldCharacter}\right),
		\end{equation}
	where we used the fact that Gauss sums are constant on Frobenius orbits.
	Every character $\beta \colon \multiplicativegroup{\finiteFieldExtension{n}} \to \multiplicativegroup{\cComplex}$ appears exactly once as $\theta_{\sigma} \circ \FieldNorm{n}{a}^{q^j}$ in \eqref{eq:uniform-sum-for-matrix-kloosterman-sum}. Indeed, given such $\beta$, let $a$ be the cardinality of the set $\left\{\beta^{q^j} \mid j \ge 0\right\}$. Then $a \mid n$, and there exists a regular character $\beta' \colon \multiplicativegroup{\finiteFieldExtension{a}} \to \multiplicativegroup{\cComplex}$, such that $\beta = \beta' \circ \FieldNorm{n}{a}$. We take $\sigma$ to be the cuspidal representation of $\GL_a\left(\finiteField\right)$ corresponding to the Frobenius orbit of $\beta'$. Then $\beta' = \theta_{\sigma}^{q^j}$ for some $j$. This shows that every character appears in \eqref{eq:uniform-sum-for-matrix-kloosterman-sum}. It appears exactly once, because Frobenius orbits of regular characters of $\multiplicativegroup{\finiteFieldExtension{a}}$ are in bijection with irreducible cuspidal representations of $\GL_a\left(\finiteField\right)$. Hence, we can rewrite \eqref{eq:uniform-sum-for-matrix-kloosterman-sum} and obtain the following expression for $\ExoticKloosterman\left(\alpha, \fieldCharacter, x\right)$:
	$$\left(-1\right)^{\left(k-1\right)\left(n+1\right)} \frac{q^{\left(k-1\right)\binom{n}{2}}}{\sizeof{\charactergroup{n}}} \sum_{\beta \in \charactergroup{n}} \sum_{t_1,\dots,t_k \in \multiplicativegroup{\finiteFieldExtension{n}}} \beta\left( \xi^{-1} \cdot \prod_{i=1}^k t_i \right) \left(\prod_{i=1}^k \alpha_i\left(  \FieldNorm{n}{1}\left( t_i \right)\right)  \right) \fieldCharacter_{n_j}\left(\sum_{i=1}^k t_i\right).$$
	For any $b \in \multiplicativegroup{\finiteFieldExtension{n}}$, the assignment $\charactergroup{n} \to \multiplicativegroup{\cComplex}$ given by $\beta \mapsto \beta\left(b\right)$ is a character. Since an average of a character over a finite group results with $0$, unless the character is trivial, we get by changing the order of the summation that the sum over $\beta$ is zero unless $\xi^{-1} \cdot \prod_{i=1}^k t_i = 1$, in which case the average evaluates to $1$. Therefore, we get
	$$\ExoticKloosterman\left(\alpha, \fieldCharacter, x\right) =  \left(-1\right)^{\left(n+1\right)\left(k-1\right)} q^{\left(k-1\right)\binom{n}{2}} \sum_{\substack{t_1,\dots,t_k \in \multiplicativegroup{\finiteFieldExtension{n}}\\
	\prod_{i=1}^k t_i = \xi}} \left(\prod_{i=1}^k \alpha_i\left(  \FieldNorm{n}{1}\left( t_i \right)  \right)\right) \fieldCharacter_{n_j}\left(\sum_{i=1}^k t_i\right),$$
	as required.
\end{proof}

We finish this section by remarking that by using the multiplicativity property of twisted matrix Kloosterman sums (\Cref{thm:multiplicativity-property}), we immediately get a formula for regular semisimple elements, as follows.
\begin{corollary}
	If $y_1 \in \GL_{d_1}\left(\finiteField\right), \dots, y_s \in \GL_{d_s}\left(\finiteField\right)$ are regular elliptic elements with mutually disjoint eigenvalues, then
	$$ \ExoticKloosterman\left(\alpha, \fieldCharacter, \diag\left(y_1, \dots, y_s\right)\right) = \left(-1\right)^{\left(n+s \right)\left(k-1\right)} q^{\left(k-1\right)\binom{n}{2} } \prod_{j = 1}^s \ExoticKloosterman_{d_j}\left( \alpha, \fieldCharacter, \eta_j \right),$$
	where $n = \sum_{j=1}^s d_j$ and $\eta_j \in \multiplicativegroup{\finiteFieldExtension{d_j}}$ is such that $\{\eta_j, \eta_j^q, \dots, \eta_j^{q^{d_j-1}}\}$ are the eigenvalues of $y_j$.
\end{corollary}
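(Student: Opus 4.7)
The plan is to prove the corollary by induction on $s$, using the multiplicativity property (\Cref{thm:multiplicativity-property}) to peel off one block at a time and feeding the result of \Cref{thm:regular-elliptic-elements} into the recursion.

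For the base case $s = 1$, the statement reduces precisely to \Cref{thm:regular-elliptic-elements} applied to $y_1$, whose eigenvalues are $\{\eta_1, \eta_1^q, \dots, \eta_1^{q^{d_1-1}}\}$. For the inductive step, assume the formula for $s-1$ blocks. Set $n' = d_2 + \dots + d_s$ and write $\diag(y_1, \dots, y_s) = \diag(y_1, y')$ where $y' = \diag(y_2, \dots, y_s) \in \GL_{n'}(\finiteField)$. Since the eigenvalues of $y_1$ are disjoint from those of $y_2, \dots, y_s$ over $\algebraicClosure{\finiteField}$, \Cref{thm:multiplicativity-property} gives
$$ \ExoticKloosterman(\alpha, \fieldCharacter, \diag(y_1, y')) = q^{(k-1) d_1 n'} \ExoticKloosterman(\alpha, \fieldCharacter, y_1) \cdot \ExoticKloosterman(\alpha, \fieldCharacter, y'). $$
Applying \Cref{thm:regular-elliptic-elements} to the first factor and the induction hypothesis to the second, the product of the constants in front of $\prod_{j=1}^s \ExoticKloosterman_{d_j}(\alpha, \fieldCharacter, \eta_j)$ becomes
$$ q^{(k-1) d_1 n'} \cdot (-1)^{(d_1+1)(k-1)} q^{(k-1)\binom{d_1}{2}} \cdot (-1)^{(n'+s-1)(k-1)} q^{(k-1)\binom{n'}{2}}. $$

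It then remains to collect exponents. The power of $q$ is $(k-1)\bigl(d_1 n' + \binom{d_1}{2} + \binom{n'}{2}\bigr) = (k-1)\binom{d_1+n'}{2} = (k-1)\binom{n}{2}$, using the standard identity $\binom{a+b}{2} = \binom{a}{2} + \binom{b}{2} + ab$. The sign is $(-1)^{(k-1)(d_1 + 1 + n' + s - 1)} = (-1)^{(k-1)(n+s)}$, as required. There is no real obstacle here; the only point needing a bit of care is bookkeeping of the sign and $q$-power, which works out because the quadratic exponents fit together via the identity above and the two additive constants $(d_1+1) + (n'+s-1) = n + s$ collapse cleanly.
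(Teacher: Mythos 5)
Your proof is correct and follows exactly the route the paper intends: the paper derives this corollary "immediately" from \Cref{thm:multiplicativity-property} combined with \Cref{thm:regular-elliptic-elements}, which is precisely your induction on $s$, and your bookkeeping of the sign and the $q$-power via $\binom{d_1+n'}{2}=\binom{d_1}{2}+\binom{n'}{2}+d_1 n'$ is right. (Note you implicitly used the sign $(-1)^{(n+1)(k-1)}$ from the introduction and the end of the proof of \Cref{thm:regular-elliptic-elements}, which is the version consistent with the corollary, rather than the $(-1)^{n+k-1}$ appearing in the theorem's displayed statement.)
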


\section{Formula for general elements}

This section is dedicated to the proof of our second main theorem, providing a formula for twisted matrix Kloosterman sums of general Jordan matrices $J_{\mu}\left(x\right)$. The idea is to use a different basis for the space of class functions of $\GL_n\left(\finiteField\right)$, the basis of Green's basic characters. We first recall the definition of Green's basic characters. We use them to obtain an expression for $\ExoticKloosterman\left(\alpha, \fieldCharacter, J_{\mu}\left(x\right)\right)$ in terms of Green polynomials and twisted Kloosterman sums. We then recall the definition of modified Hall--Littlewood polynomials and twisted Kloosterman sheaves, and use these in order to reformulate our expression for $\ExoticKloosterman\left(\alpha, \fieldCharacter, J_{\mu}\left(x\right)\right)$.

\subsection{Green's basic characters}\label{subsec:green-basic-characters}
In this section, we recall the definition of Green's basic characters. They form another basis for the space of class functions of $\GL_n\left(\finiteField\right)$, and hence can be used to give an alternative expression for the twisted matrix Kloosterman sums of our interest.

A \emph{class function} $F \colon \GL_n\left(\finiteField\right) \to \cComplex$ is a function that is constant on conjugacy classes of $\GL_n\left(\finiteField\right)$. The space of such functions is equipped with the following inner product $$\innerproduct{F_1}{F_2} = \frac{1}{\sizeof{\GL_n\left(\finiteField\right)}} \sum_{x \in \GL_n\left(\finiteField\right)} F_1\left(x\right) \conjugate{F_2\left(x\right)}.$$

For any character $\theta \colon \multiplicativegroup{\finiteFieldExtension{n}} \to \multiplicativegroup{\cComplex}$, not necessarily regular, let $f\left({\theta}\right) = \{ \theta^{q^j} \mid j \ge 0 \}$ be its Frobenius orbit. Denote the cardinality of $f\left({\theta}\right)$ by $\deg f\left({\theta}\right)$ or $\deg \theta$. We may define a class function $\chi_{f\left(\theta\right)} \colon \GL_n\left(\finiteField\right) \to \cComplex$ associated to $f\left({\theta}\right)$ as follows. $\chi_{f\left(\theta\right)}$ is only supported on conjugacy classes of the form $J_{\mu}\left(x\right)$ where  $\mu$ is a partition of $b$, where $ab = n$ and $x \in \GL_a\left(\finiteField\right)$ is a regular elliptic element. Suppose that such $x$ has eigenvalues $\{\xi, \xi^q, \dots, \xi^{q^{a-1}}\}$, where $\xi \in \multiplicativegroup{\finiteFieldExtension{a}}$, then we set
$$\chi_{f\left(\theta\right)}\left(J_{\mu}\left(x\right)\right) = \Phi_{\ell\left(\mu\right)}\left(q^a\right) \cdot \sum_{j = 0}^{a-1} \theta\left(\xi^{q^j}\right),$$ where
$$\Phi_{l}\left(T\right) = \prod_{j=1}^{l-1} \left(1 - T^j\right).$$
If $\theta$ is a regular character, then $\chi_{f\left(\theta\right)}$ equals $\left(-1\right)^{n - 1} \trace \sigma$, where $\sigma$ is the irreducible cuspidal representation of $\GL_n\left(\finiteField\right)$ corresponding to the Frobenius orbit $\{\theta, \theta^q, \dots, \theta^{q^{n-1}}\}$. Otherwise, $\pm \chi_{f\left(\theta\right)}$ is not a character of an irreducible representation of $\GL_n\left(\finiteField\right)$.

If $n = n_1 + \dots + n_r$ and $\chi_1, \dots, \chi_r$ are class functions of $\GL_{n_1}\left(\finiteField\right)$, $\dots$, $\GL_{n_r}\left(\finiteField\right)$, respectively, we define a class function $\chi_1 \circ \dots \circ \chi_r$ of $\GL_n\left(\finiteField\right)$ by the formula for parabolic induction
$$ \left(\chi_1 \circ \dots \circ \chi_r\right)\left(g\right) = \frac{1}{\sizeof{P_{\left(n_1, \dots, n_r\right)}}} \sum_{\substack{h \in \GL_n\left(\finiteField\right)\\
hgh^{-1} \in \diag\left(g_1,\dots,g_r\right) \cdot N_{\left(n_1,\dots,n_r\right)}}} \chi_1\left(g_1\right) \cdot \dots \cdot \chi_r\left(g_r\right),$$
where $g_1 \in \GL_{n_1}\left(\finiteField\right)$, $\dots$, $g_r \in \GL_{n_r}\left(\finiteField\right)$. The operation $\circ$ is commutative and associative.

Given a partition $\lambda = \left(n_1,\dots,n_r\right) \vdash n$ and Frobenius orbits $f_1 = f\left(\theta_1\right)$, $\dots$, $f_r = f\left(\theta_r\right)$, where $\theta_j \colon \multiplicativegroup{\finiteFieldExtension{n_j}} \to \multiplicativegroup{\cComplex}$, we define a class function of $\GL_n\left(\finiteField\right)$ associated to the multiset $\left\{f_1, \dots, f_r\right\}$ by
$$\chi^{\lambda}_{\left\{f_1, \dots, f_r\right\}} = \chi_{f_1} \circ \dots \circ \chi_{f_r}.$$
This is well defined, as $\circ$ is commutative and associative.

A function of the form $\chi^{\lambda}_{\left\{f_1, \dots, f_r\right\}}$ is called a \emph{Green basic character}. Green showed in \cite[Lemma 6.3]{Green55} that the collection of all basic characters corresponding to partitions of $n$, that is, $$\left(\chi^{\lambda}_{\left\{f_1, \dots, f_r\right\}} \mid \lambda = \left(n_1,\dots,n_r\right) \vdash n; f_j \text{ is a Frobenius orbit of a character of } \multiplicativegroup{\finiteFieldExtension{n_j}} \right),$$
forms an orthogonal basis for the space of class functions of $\GL_n\left(\finiteField\right)$ (see also \cite[Page 286 Example 1]{macdonald1998symmetric}).

By \cite[Formula (39)]{Green55}, \begin{equation}\label{eq:degree-of-basic-character}
	\chi^{\lambda}_{\left\{f_1, \dots, f_r\right\}}\left( \IdentityMatrix{n} \right) = \frac{\Phi_n\left(q\right)}{\prod_{j = 1}^{\infty}\left(1-q^j\right)^{\lambda\left(j\right)}},
\end{equation}
where $\lambda\left(j\right)$ is the number of times $j$ appears in $\lambda$. Moreover, the computations in \cite[Pages 423 and 433]{Green55} yield that if $n = ab$ and $x \in \GL_a\left(\finiteField\right)$ is a regular elliptic element with eigenvalues $\{\xi,\xi^{q},\dots,\xi^{q^{a-1}}\}$, where $\xi \in \multiplicativegroup{\finiteFieldExtension{a}}$, and if $\mu \vdash b$, then
$\chi^{\lambda}_{\left\{f_1, \dots, f_r\right\}}\left( J_{\mu}\left(x\right) \right) = 0$ unless $\lambda = \left(a n'_1, \dots, a n'_r\right)$ for some partition $\lambda' = \left(n'_1, \dots, n'_r\right) \vdash b$, and in this case 
\begin{equation}\label{eq:evaluation-of-basic-character-at-an-element-with-a-unique-eigenvalue}
	\chi^{\lambda}_{\left\{f_1, \dots, f_r\right\}}\left( J_{\mu}\left(x\right) \right) =  Q_{\lambda'}^{\mu}\left(q^{a}\right) \cdot \prod_{j = 1}^r \left(\sum_{i = 0}^{a -1}\theta_j\left(\xi^{q^{i}}\right)\right),
\end{equation}
where $Q_{\lambda'}^{\mu}\left(q\right)$ is Green's polynomial \cite[Pages 246-248]{macdonald1998symmetric}.

Let $f'_1, \dots, f'_{r'}$ be all the Frobenius orbits that appear in the multiset $\left\{f_1,\dots,f_r\right\}$ without repetition, and let $m_j$ be the number of orbits among $f_1, \dots, f_r$ that equal $f'_j$. By \cite[Lemma 6.3]{Green55}, \begin{equation}\label{eq:norm-of-a-basic-character}
	\innerproduct{\chi^{\lambda}_{\left\{f_1, \dots, f_r\right\}}}{\chi^{\lambda}_{\left\{f_1, \dots, f_r\right\}}} = \left(\prod_{j=1}^{r} \frac{n_j}{\deg f_j}\right) \left(\prod_{i=1}^{r'} m_i!\right).
\end{equation}
Finally, we remark that $\chi^{\lambda}_{\left\{f_1, \dots, f_r\right\}}$ is a linear combination of $\trace \pi$, where $\pi$ goes over all irreducible representations of $\GL_n\left(\finiteField\right)$ with a certain cuspidal support, which we will describe shortly. To describe this cuspidal support, write $f_1 = f\left(\theta'_1 \circ \FieldNorm{n_1}{a_1}\right)$, $\dots$, $f_{r} = f\left(\theta'_{r} \circ \FieldNorm{n_{r}}{a_{r}}\right)$, where $n_j = a_j b_j$ and $\theta'_j \colon \multiplicativegroup{\finiteFieldExtension{a_j}} \to \multiplicativegroup{\cComplex}$ is a regular character. For each $j$, the Frobenius orbit $f\left(\theta'_j\right)$ corresponds to an irreducible cuspidal representation $\sigma_j$ of $\GL_{a_j}\left(\finiteField\right)$. Then $\chi^{\lambda}_{\left\{f_1, \dots, f_r\right\}}$ is a linear combination of characters of irreducible representations of $\GL_n\left(\finiteField\right)$, each having the same cuspidal support, which is the multiset formed by summing the following $r$ multisets: $b_1$ copies of $\sigma_1$, $b_2$ copies of $\sigma_2$, $\dots$, and $b_r$ copies of $\sigma_r$.

\subsection{Reduction of twisted matrix Kloosterman sums to twisted Kloosterman sums and Green polynomials}

In this section, we use Green's basic characters in order to reduce $\ExoticKloosterman\left(\alpha, \fieldCharacter, J_{\mu}\left(x\right)\right)$ for an elliptic element $x \in \GL_a\left(\finiteField\right)$ and $\mu \vdash b$, to an expression involving twisted Kloosterman sums of eigenvalues of $x$ and Green polynomials. The final expression is given in \Cref{thm:matrix-kloosterman-sum-at-jordan-using-green-functions}. We make use of \Cref{prop:kloosterman-character-identity} to find the expansion of $\ExoticKloosterman\left(\alpha, \fieldCharacter, y\right)$ in terms of Green's basic characters and Gauss sums. Then, we substitute $y = J_{\mu}\left(x\right)$ and make use of formula \eqref{eq:evaluation-of-basic-character-at-an-element-with-a-unique-eigenvalue} for the value of Green's basic characters at $J_{\mu}\left(x\right)$.

\subsubsection{Expansion of twisted matrix Kloosterman sums in terms of basic characters}

Let $\alpha \colon \left(\multiplicativegroup{\finiteField}\right)^k \to \multiplicativegroup{\cComplex}$ be a character. Write $\alpha = \alpha_1 \times \dots \times \alpha_k$, where $\alpha_1,\dots,\alpha_k \colon \multiplicativegroup{\finiteField} \to \multiplicativegroup{\cComplex}$ are characters. Consider the class function $\KloostermanSumClassFunction \colon \GL_n\left(\finiteField\right) \to \multiplicativegroup{\cComplex}$ defined by $\KloostermanSumClassFunction\left(x\right) = \ExoticKloosterman\left(\alpha, \fieldCharacter, x\right)$. Then we have $$\KloostermanSumClassFunction\left(x\right) = \sum_{ \lambda = \left(n_1,\dots,n_r\right) \vdash n} \sum_{ \{f_1,\dots,f_r\}} \frac{\innerproduct{\KloostermanSumClassFunction}{\conjugate{\chi^{\lambda}_{\left\{f_1, \dots, f_r\right\}}}}}{\innerproduct{\chi^{\lambda}_{\left\{f_1, \dots, f_r\right\}}}{\chi^{\lambda}_{\left\{f_1, \dots, f_r\right\}}}} \conjugate{\chi^{\lambda}_{\left\{f_1, \dots, f_r\right\}}\left(x\right)}.$$

Let us evaluate $\innerproduct{\KloostermanSumClassFunction}{\conjugate{\chi^{\lambda}_{\left\{f_1, \dots, f_r\right\}}}}$. Write $$\chi^{\lambda}_{\left\{f_1, \dots, f_r\right\}} = \sum_{\pi} c_\pi \trace \pi,$$
where $\pi$ goes over all irreducible representations of $\GL_n\left(\finiteField\right)$ that have cuspidal support as described at the end of \Cref{subsec:green-basic-characters}, and $c_\pi \in \cComplex$. Then $$ {\innerproduct{\KloostermanSumClassFunction}{\conjugate{\chi^{\lambda}_{\left\{f_1, \dots, f_r\right\}}}}} = \frac{1}{\sizeof{\GL_n\left(\finiteField\right)}} \sum_{\pi} \sum_{g_1,\dots,g_k \in \GL_n\left(\finiteField\right)} {c_{\pi}} \left(\prod_{i=1}^k \alpha_i\left(\det g_i\right)\right) \fieldCharacter\left(\trace \sum_{i=1}^k g_i\right) {\trace \pi\left(g_1\dots g_k\right)}.$$
By \eqref{eq:trace-of-product-of-non-abelian-gauss-sum}, we get that ${\innerproduct{\KloostermanSumClassFunction}{\conjugate{\chi^{\lambda}_{\left\{f_1, \dots, f_r\right\}}}}}$ is given by \begin{align*}
	& \frac{q^{\frac{k n^2}{2}}}{\sizeof{\GL_n\left(\finiteField\right)}} \left(\prod_{i=1}^k \GKGaussSumScalar{\pi'}{\alpha_i}{\fieldCharacter}\right) \sum_{\pi} {c_\pi} \trace \pi\left(\IdentityMatrix{n}\right) \\
	=& \frac{q^{\frac{k n^2}{2}}}{\sizeof{\GL_n\left(\finiteField\right)}}  \left(\prod_{i=1}^k \GKGaussSumScalar{\pi'}{\alpha_i}{\fieldCharacter}\right) \chi^{\lambda}_{\left\{f_1, \dots, f_r\right\}}\left(\IdentityMatrix{n}\right),
\end{align*}
where $\pi'$ is any irreducible representation of $\GL_n\left(\finiteField\right)$ with cuspidal support as described at the end of \Cref{subsec:green-basic-characters}. Let $\sigma_1, \dots, \sigma_r$ be as at the end of \Cref{subsec:green-basic-characters}. By \Cref{thm:kondos-formula}, we have $$q^{\frac{n}{2}} \GKGaussSumScalar{\pi'}{\alpha_i}{\fieldCharacter} = \left(-1\right)^n \prod_{j=1}^r \left( \GaussSumCharacter{a_j}{\theta'_j}{\alpha_i}{\fieldCharacter}\right)^{b_j} = \left(-1\right)^n \prod_{j=1}^r \GaussSumCharacter{a_j b_j}{\theta'_j \circ \FieldNorm{a_j b_j}{a_j}}{\alpha_i}{\fieldCharacter}.$$ 
where we used the Hasse--Davenport relation for the last step. Combining this with \eqref{eq:degree-of-basic-character}, we have
$${\innerproduct{\KloostermanSumClassFunction}{\conjugate{\chi^{\lambda}_{\left\{f_1, \dots, f_r\right\}}}}} = \left(-1\right)^{\left(k-1\right)n} \frac{q^{\left(k-1\right)\binom{n}{2}}}{\prod_{j = 1}^{\infty}\left(1-q^j\right)^{\lambda\left(j\right)}} \cdot \prod_{i = 1}^k \prod_{j=1}^{r}\GaussSumCharacter{n_j}{\theta_j}{\alpha_i}{\fieldCharacter}.$$

Hence, we obtained the formula
\begin{equation} \label{eq:matrix-kloosterman-sum-in-terms-of-basic-characters}
	\begin{split}
		\KloostermanSumClassFunction\left(x\right) &= \left(-1\right)^{\left(k-1\right)n} \sum_{ \lambda = \left(n_1,\dots,n_r\right) \vdash n} \sum_{ \{f_1,\dots,f_r\}} \frac{\conjugate{\chi^{\lambda}_{\left\{f_1, \dots, f_r\right\}}\left(x\right)}}{\innerproduct{\chi^{\lambda}_{\left\{f_1, \dots, f_r\right\}}}{\chi^{\lambda}_{\left\{f_1, \dots, f_r\right\}}}} \frac{q^{\left(k-1\right) \binom{n}{2} }}{\prod_{j = 1}^{\infty}\left(1-q^j\right)^{\lambda\left(j\right)}} \\
		& \times \prod_{i = 1}^k \prod_{j=1}^{r} \GaussSumCharacter{n_j}{f_j}{\alpha_i}{\fieldCharacter},
	\end{split}
\end{equation}

where $\GaussSumCharacter{n_j}{f_j}{\alpha_i}{\fieldCharacter} = \GaussSumCharacter{n_j}{\theta_j}{\alpha_i}{\fieldCharacter}$.

\subsubsection{Evaluation at $J_{\mu}\left(x\right)$}

We would like to substitute $J_{\mu}\left(x\right)$ in \eqref{eq:matrix-kloosterman-sum-in-terms-of-basic-characters}, where $n = ab$,  $\mu \vdash b$, and $x \in \GL_a\left(\finiteField\right)$ is a regular elliptic element with eigenvalues $\{\xi, \xi^q, \dots, \xi^{q^{a-1}}\}$.

We already have an expression for the inner product $\innerproduct{\chi^{\lambda}_{\left\{f_1, \dots, f_r\right\}}}{\chi^{\lambda}_{\left\{f_1, \dots, f_r\right\}}}$ in \eqref{eq:norm-of-a-basic-character} and an expression for $\chi^{\lambda}_{\left\{f_1, \dots, f_r\right\}}\left(J_{\mu}\left(x\right)\right)$ in \eqref{eq:evaluation-of-basic-character-at-an-element-with-a-unique-eigenvalue}. We would like to simplify \eqref{eq:matrix-kloosterman-sum-in-terms-of-basic-characters} by replacing the inner sum over the Frobenius orbits $\left\{f_1,\dots,f_r\right\}$, with a sum over all characters. The simplified version is given in the following theorem.

\begin{theorem}
	Let $\lambda = \left(n_1,\dots,n_r\right) \vdash n$ be a partition. Denote 
	\begin{align*}
			\KloostermanSumClassFunction_{\lambda}\left( J_{\mu}\left(x\right) \right) &= \sum_{ \{f_1,\dots,f_r\}} \frac{\conjugate{\chi^{\lambda}_{\left\{f_1, \dots, f_r\right\}}\left(J_{\mu}\left(x\right)\right)}}{\innerproduct{\chi^{\lambda}_{\left\{f_1, \dots, f_r\right\}}}{\chi^{\lambda}_{\left\{f_1, \dots, f_r\right\}}}} \frac{q^{\left(k-1\right)\binom{n}{2}}}{\prod_{j = 1}^{\infty}\left(1-q^j\right)^{\lambda\left(j\right)}} \prod_{i = 1}^k \prod_{j=1}^{r}\GaussSumCharacter{n_j}{f_j}{\alpha_i}{\fieldCharacter},
	\end{align*}
	where the sum is over all multisets of Frobenius orbits $\left\{f_1,\dots,f_r\right\}$, such that $f_j$ is a Frobenius orbit of a character of $\multiplicativegroup{\finiteFieldExtension{n_j}}$.
	
	Then $\KloostermanSumClassFunction_{\lambda}\left(J_{\mu}\left(x\right)\right) = 0$ unless $\lambda = \left(a n'_1, \dots, a n'_r\right)$, where $\lambda' = \left(n'_1,\dots,n'_r\right) \vdash b$, and in this case
	$$ \KloostermanSumClassFunction_{\lambda}\left( J_{\mu}\left(x\right) \right) = \left(-1\right)^{\ell\left(\lambda\right) \left(k-1\right)} q^{\left(k-1\right) \binom{n}{2}} \cdot \frac{Q_{\lambda'}^{\mu}\left(q^a\right)}{z_{\lambda'}} \prod_{j=1}^{r} \ExoticKloosterman_{a n'_j}\left(\alpha, \fieldCharacter, \xi\right).$$
	Here $z_{\lambda'}  = \prod_{i = 1}^{\infty} i^{\lambda'\left(i\right)} \lambda'\left(i\right)!$, where $\lambda'\left(i\right)$ is the number of times $i$ appears in $\lambda'$.
\end{theorem}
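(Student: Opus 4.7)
The plan is to substitute the explicit formulas \eqref{eq:evaluation-of-basic-character-at-an-element-with-a-unique-eigenvalue} and \eqref{eq:norm-of-a-basic-character} into the definition of $\KloostermanSumClassFunction_{\lambda}\left(J_{\mu}\left(x\right)\right)$ and then convert the sum over multisets of Frobenius orbits into a product of sums over single characters, where character orthogonality can be applied. The vanishing when $\lambda$ does not have the form $\left(a n'_1, \dots, a n'_r\right)$ is immediate from \eqref{eq:evaluation-of-basic-character-at-an-element-with-a-unique-eigenvalue}, so assume $\lambda$ has this form and set $\lambda' = \left(n'_1,\dots,n'_r\right) \vdash b$. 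After substitution, the factor $Q_{\lambda'}^{\mu}\left(q^{a}\right)$, being real, pulls outside, and each remaining summand is a product over $j$ of terms depending only on $f_j$ and $n_j$.

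The heart of the argument is the passage from a sum over multisets of Frobenius orbits to a product of sums over single characters. Group the parts of $\lambda$ by their value: each value $v$ appears $\lambda\left(v\right)$ times among $n_1,\dots,n_r$. For a function $H$ on Frobenius orbits of characters of $\multiplicativegroup{\finiteFieldExtension{v}}$, the combinatorial identity $\sum_{M}\frac{1}{\prod_i m_i!}\prod_{f\in M}H\left(f\right) = \frac{1}{\lambda\left(v\right)!}\left(\sum_f H\left(f\right)\right)^{\lambda\left(v\right)}$ (with $M$ running over multisets of size $\lambda\left(v\right)$), together with $\sum_f H\left(f\right) = \frac{1}{v}\sum_{\theta \in \charactergroup{v}} \tilde H\left(\theta\right)$ (lifting from orbits to characters, since each orbit $f$ contains $\deg f$ characters), converts the multiset sum. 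Applied to every value $v$ appearing as a part of $\lambda$, this gives
\[
\sum_{\left\{f_1,\dots,f_r\right\}}\frac{\prod_j \left(\deg f_j / n_j\right) G\left(f_j\right)}{\prod_i m_i!} = \frac{1}{z_\lambda}\prod_{j=1}^r \sum_{\theta_j \in \charactergroup{n_j}} \tilde G\left(\theta_j\right),
\]
using $z_\lambda = \prod_v v^{\lambda\left(v\right)}\lambda\left(v\right)! = \prod_j n_j \cdot \prod_v \lambda\left(v\right)!$.

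For each $j$, expand the inner sum using the definition of $\GaussSumCharacter{n_j}{\theta_j}{\alpha_l}{\fieldCharacter}$ as a sum over $\left(t_1,\dots,t_k\right) \in \left(\multiplicativegroup{\finiteFieldExtension{n_j}}\right)^k$ against $\theta_j\left(\prod_l t_l\right) \cdot \conjugate{\sum_{i=0}^{a-1}\theta_j\left(\xi^{q^i}\right)}$. Character orthogonality forces $\prod_l t_l = \xi^{q^i}$ for some $i \in \left\{0,\dots,a-1\right\}$, and Frobenius invariance of twisted Kloosterman sums, $\ExoticKloosterman_{n_j}\left(\alpha,\fieldCharacter,\xi^{q^i}\right) = \ExoticKloosterman_{n_j}\left(\alpha,\fieldCharacter,\xi\right)$, collapses the sum over $i$ to a factor of $a$, yielding $\sum_{\theta_j}\tilde G\left(\theta_j\right) = \left(-1\right)^k \left(q^{n_j}-1\right)\cdot a \cdot \ExoticKloosterman_{n_j}\left(\alpha,\fieldCharacter,\xi\right)$. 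Finally, the identity $z_\lambda = a^r z_{\lambda'}$ (from $\lambda\left(an'_l\right) = \lambda'\left(n'_l\right)$), the cancellation $\prod_j \left(q^{n_j}-1\right) = \left(-1\right)^r \prod_j \left(1-q^j\right)^{\lambda\left(j\right)}$, and the sign $\left(-1\right)^{kr}\cdot\left(-1\right)^r = \left(-1\right)^{\left(k-1\right)r}$ produce the claimed formula. The main obstacle is the combinatorial bookkeeping of the multiset-to-character conversion: in particular how the factor $\prod_j\left(n_j/\deg f_j\right)$ from the inner product \eqref{eq:norm-of-a-basic-character} conspires with $\prod_i m_i!$ to produce exactly $1/z_\lambda$, and how the factors of $a$ gathered along the way absorb into $z_{\lambda'}$.
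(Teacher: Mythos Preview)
Your proof is correct and follows essentially the same route as the paper's: substitute \eqref{eq:evaluation-of-basic-character-at-an-element-with-a-unique-eigenvalue} and \eqref{eq:norm-of-a-basic-character}, convert the multiset sum to a product of sums over all characters of $\multiplicativegroup{\finiteFieldExtension{n_j}}$, and apply character orthogonality. The only bookkeeping difference is where the factor $a^r$ is absorbed: the paper rewrites $\sum_{d=0}^{a-1}\theta_j^{-q^d}(\xi)=\frac{a}{\deg\theta_j}\sum_{d=0}^{\deg\theta_j-1}\theta_j^{-q^d}(\xi)$ early, cancels $\deg\theta_j$ against the inner product, and lands directly on $1/z_{\lambda'}$; you instead carry $\sum_{i=0}^{a-1}$ through to the orthogonality step (where it produces $a$ copies of $\ExoticKloosterman_{n_j}(\alpha,\fieldCharacter,\xi)$), obtain $1/z_\lambda$, and then invoke $z_\lambda=a^r z_{\lambda'}$. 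One small wording issue: your intermediate claim ``$\sum_f H(f)=\frac{1}{v}\sum_\theta \tilde H(\theta)$'' is only literally correct once the factor $\deg f/v$ has already been absorbed into $H$ (the general identity is $\sum_f(\deg f)H(f)=\sum_\theta \tilde H(\theta)$); your displayed combined formula is correct as written, so this is purely notational.
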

\begin{proof}
	We have that $\chi^{\lambda}_{\left\{f_1, \dots, f_r\right\}}\left(J_{\mu}\left(x\right)\right)$ is zero unless $\lambda$ is of the form given in the theorem. In this case, by \eqref{eq:evaluation-of-basic-character-at-an-element-with-a-unique-eigenvalue},
	\begin{equation}\label{eq:expression-of-K-lambda-in-terms-of-K-prime-lambda}
		\KloostermanSumClassFunction_{\lambda}\left(J_{\mu}\left(x\right)\right) = \left(-1\right)^{\ell \left(\lambda\right)} \frac{q^{\left(k-1\right)\binom{n}{2}}}{\prod_{j = 1}^{\infty}\left(q^j-1\right)^{\lambda\left(j\right)}} Q_{\lambda'}^{\mu}\left(q^{a}\right)  \cdot \KloostermanSumClassFunction'_{\lambda}\left(J_{\mu}\left(x\right)\right),
	\end{equation}
	where
	\begin{equation*}
		\begin{split}
			\KloostermanSumClassFunction'_{\lambda}\left(J_{\mu}\left(x\right)\right) =& \sum_{\left\{f_1, \dots, f_r\right\}} \frac{1}{\innerproduct{\chi^{\lambda}_{\left\{f_1, \dots, f_r\right\}}}{\chi^{\lambda}_{\left\{f_1, \dots, f_r\right\}}}} \prod_{j = 1}^r \left( \sum_{d = 0}^{a -1} \theta_j^{-q^d}\left(\xi\right) \prod_{i=1}^k \GaussSumCharacter{n_j}{\theta_j}{\alpha_i}{\fieldCharacter}\right).
		\end{split}
	\end{equation*}
	Since there are $\deg \theta_j$ elements in the Frobenius orbit $\{\theta_j^{q^d} \mid d \ge 0\}$, we may rewrite $\KloostermanSumClassFunction'_{\lambda}\left(J_{\mu}\left(x\right)\right)$ as
	\begin{equation*}
		\begin{split}
			\sum_{\left\{f_1, \dots, f_r\right\}} \frac{1}{\innerproduct{\chi^{\lambda}_{\left\{f_1, \dots, f_r\right\}}}{\chi^{\lambda}_{\left\{f_1, \dots, f_r\right\}}}} \prod_{j = 1}^r \frac{a}{\deg \theta_j} \left( \sum_{d = 0}^{\deg \theta_j -1} \theta_j^{-q^d}\left(\xi\right) \prod_{i=1}^k \GaussSumCharacter{n_j}{\theta_j}{\alpha_i}{\fieldCharacter} \right).
		\end{split}
	\end{equation*}	
	The factor $\prod_{j=1}^r \frac{1}{\deg \theta_j}$ also appears in $\innerproduct{\chi^{\lambda}_{\left\{f_1, \dots, f_r\right\}}}{\chi^{\lambda}_{\left\{f_1, \dots, f_r\right\}}}$ (see \eqref{eq:norm-of-a-basic-character}). Hence it cancels, and we get that $\KloostermanSumClassFunction'_{\lambda}\left(J_{\mu}\left(x\right)\right)$ is given by
	\begin{equation*}
		\begin{split}
			\frac{a^r}{\left(\prod_{i=1}^r n_i \right) \left(\prod_{j=1}^{r'} m_j!\right)} \sum_{\left\{f_1, \dots, f_r\right\}}  \prod_{j = 1}^r \left(\sum_{d = 0}^{\deg \theta_j -1} \theta_j^{-q^d}\left(\xi\right) \prod_{i=1}^k \GaussSumCharacter{n_j}{\theta_j}{\alpha_i}{\fieldCharacter} \right),
		\end{split}
	\end{equation*}
	where if $f'_1, \dots, f'_{r'}$ are all the different Frobenius orbit among $f_1, \dots, f_r$, without repetition, then $m_j$ is the number of times $f'_j$ appears in $f_1, \dots, f_r$. Notice that $\frac{n_i}{a} = n'_i$, and therefore this sum is the same as 
	\begin{equation}\label{eq:last-summation-over-frobenius-orbits}
		\begin{split}
			\frac{1}{\left(\prod_{i=1}^r n'_i \right) \left(\prod_{j=1}^{r'} m_j!\right)} \sum_{\left\{f_1, \dots, f_r\right\}}  \prod_{j = 1}^r \left(\sum_{d = 0}^{\deg \theta_j -1} \theta_j^{-q^d}\left(\xi\right) \prod_{i=1}^k \GaussSumCharacter{n_j}{\theta_j}{\alpha_i}{\fieldCharacter} \right).
		\end{split}
	\end{equation}	
	
	We claim that $\KloostermanSumClassFunction'_{\lambda}\left(J_{\mu}\left(x\right)\right)$ can be written as
	\begin{equation}\label{eq:summation-over-character-group}
	\begin{split}
		\frac{1}{z_{\lambda'}} \sum_{\theta_1 \in \charactergroup{n_1}, \dots, \theta_r \in \charactergroup{n_r}} \prod_{j = 1}^r \left(\theta^{-1}_j\left(\xi\right) \prod_{i=1}^k \GaussSumCharacter{n_j}{\theta_j}{\alpha_i}{\fieldCharacter} \right).
	\end{split}
\end{equation}	
	In order to see this, we need to count how many times a multiset $\left\{f_1,\dots,f_r\right\}$ appears as a multiset $\{f\left(\theta_1\right), f\left(\theta_2\right), \dots, f\left(\theta_r\right)\}$ for Frobenius orbits $f\left(\theta_1\right)$, $\dots$, $f\left(\theta_r\right)$ corresponding to characters as in \eqref{eq:summation-over-character-group}. 
	Write $\lambda' = \left(1^{\lambda'\left(1\right)}, 2^{\lambda'\left(2\right)}, \dots\right)$ and for every $j$, let $f_{j 1}, \dots, f_{j \lambda'\left(j\right)}$ be the Frobenius orbits of characters in $\charactergroup{aj}$, among $f_1,\dots,f_r$. Let $f'_{j 1}, \dots, f'_{j l_{j}}$ be the different Frobenius orbits among $f_{j 1},\dots,f_{j \lambda'\left(j\right)}$ and let $m_{j i}$ be the number of times $f'_{j i}$ appears among $f_{j 1}, \dots, f_{j \lambda'\left(j\right)}$. The number of ways that $\left\{f_1,\dots,f_r\right\}$ appears as a multiset as above is the following product of multinomial coefficients
	$$ \prod_{j = 1}^{\infty} \binom{\lambda'\left(j\right)}{m_{j 1}, \dots, m_{j l_j}}.$$
	Notice that $$\frac{1}{z_{\lambda'}} \prod_{j = 1}^{\infty} \binom{\lambda'\left(j\right)}{m_{j 1}, \dots, m_{j l_j}} = \frac{1}{\left(\prod_{i = 1}^r n'_i\right) \cdot \left(\prod_{j=1}^{\infty} \prod_{i=1}^{l_{j}} m_{j i}!\right)} = \frac{1}{\left(\prod_{i=1}^r n'_i \right) \left(\prod_{j=1}^{r'} m_j!\right)},$$
	and therefore, \eqref{eq:last-summation-over-frobenius-orbits} is the same as \eqref{eq:summation-over-character-group}.
	
	We are now able to evaluate the sum in \eqref{eq:summation-over-character-group}. We have that $\KloostermanSumClassFunction'_{\lambda}\left(J_{\mu}\left(x\right)\right)$ is given by \begin{equation*}
		\begin{split}
			& \left(-1\right)^{kr} \frac{1}{z_{\lambda'}} \sum_{\theta_1 \in \charactergroup{n_1}, \dots, \theta_r \in \charactergroup{n_r}} \\
			& \times \prod_{j = 1}^r \left( \sum_{t_{j1},\dots,t_{jk} \in \multiplicativegroup{\finiteFieldExtension{n_j}}} \left(\prod_{i=1}^k \alpha_i\left(\FieldNorm{n_j}{1} \left(t_{ji}\right)\right)\right) \theta_j\left( \xi^{-1} \prod_{i=1}^k t_{ji} \right) \fieldCharacter_{n_j}\left(\sum_{i=1}^k t_{ji}\right) \right).
		\end{split}
	\end{equation*}
	As before, after changing the order of summation, the inner sum over $\theta_j \in \multiplicativegroup{\finiteFieldExtension{n_j}}$ vanishes, unless $\xi^{-1} \prod_{i = 1}^k t_{j i} = 1$, in which case it evaluates to $\sizeof{\multiplicativegroup{\finiteFieldExtension{n_j}}}$. Hence, $$\KloostermanSumClassFunction'_{\lambda}\left(J_{\mu}\left(x\right)\right)= \left(-1\right)^{k \ell\left(\lambda\right)} \frac{1}{z_{\lambda'}} \prod_{j = 1}^r \sizeof{\multiplicativegroup{\finiteFieldExtension{n_j}}} \ExoticKloosterman_{n_j}\left(\alpha, \fieldCharacter, \xi\right).$$
	Substituting this in \eqref{eq:expression-of-K-lambda-in-terms-of-K-prime-lambda} yields the result.
\end{proof}

As a result this computation and of \eqref{eq:matrix-kloosterman-sum-in-terms-of-basic-characters}, we obtain the following theorem.

\begin{theorem}\label{thm:matrix-kloosterman-sum-at-jordan-using-green-functions}
	Let $n = ab$ and let $x \in \GL_a\left(\finiteField\right)$ be a regular elliptic element with eigenvalues $\{\xi, \xi^q, \dots, \xi^{q^{a-1}}\}$. Then for any $k > 1$ and any character $\alpha \colon \left(\multiplicativegroup{\finiteField}\right)^k \to \multiplicativegroup{\cComplex}$, and for any partition $\mu \vdash b$, we have
	$$\ExoticKloosterman\left(\alpha, \fieldCharacter, J_{\mu}\left(x\right) \right) = \left(-1\right)^{\left(k-1\right)n} q^{\left(k-1\right) \binom{n}{2}} \sum_{\lambda = \left(b_1,\dots,b_r\right) \vdash b} \left(-1\right)^{\ell\left(\lambda\right)\left(k-1\right)} \frac{Q_{\lambda}^{\mu}\left(q^a\right)}{z_{\lambda}} \prod_{j=1}^{r} \ExoticKloosterman_{a b_j}\left(\alpha, \fieldCharacter, \xi\right). $$
\end{theorem}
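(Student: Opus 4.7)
The statement is essentially a bookkeeping consequence of the immediately preceding theorem (which evaluates $\KloostermanSumClassFunction_\lambda(J_\mu(x))$ for each partition $\lambda\vdash n$) combined with the expansion \eqref{eq:matrix-kloosterman-sum-in-terms-of-basic-characters} of twisted matrix Kloosterman sums in Green's basic character basis. The plan is to specialize \eqref{eq:matrix-kloosterman-sum-in-terms-of-basic-characters} at $y = J_\mu(x)$, regroup the double sum by the underlying partition $\lambda \vdash n$, and then plug in the explicit formula from the preceding theorem.

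Concretely, I would first observe that the inner sum in \eqref{eq:matrix-kloosterman-sum-in-terms-of-basic-characters} over multisets $\{f_1,\dots,f_r\}$ of Frobenius orbits (with $f_j$ of degree dividing $n_j$) is exactly the quantity $\KloostermanSumClassFunction_\lambda(J_\mu(x))$ defined in the preceding theorem. Therefore
\[
\ExoticKloosterman(\alpha,\fieldCharacter,J_\mu(x)) \;=\; (-1)^{(k-1)n}\sum_{\lambda\vdash n}\KloostermanSumClassFunction_\lambda(J_\mu(x)).
\]

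Next, I would invoke the preceding theorem: it tells us that $\KloostermanSumClassFunction_\lambda(J_\mu(x))$ vanishes unless every part of $\lambda$ is divisible by $a$, i.e.\ unless $\lambda = (an'_1,\dots,an'_r)$ for some $\lambda' = (n'_1,\dots,n'_r)\vdash b$, and that for such $\lambda$ one has
\[
\KloostermanSumClassFunction_\lambda(J_\mu(x)) \;=\; (-1)^{\ell(\lambda)(k-1)} q^{(k-1)\binom{n}{2}}\frac{Q^\mu_{\lambda'}(q^a)}{z_{\lambda'}}\prod_{j=1}^{r}\ExoticKloosterman_{an'_j}(\alpha,\fieldCharacter,\xi).
\]

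Finally, I would reindex the outer sum using the bijection $\lambda \leftrightarrow \lambda'$ between partitions of $n$ with all parts divisible by $a$ and partitions of $b$; since $\ell(\lambda) = \ell(\lambda')$, the sign $(-1)^{\ell(\lambda)(k-1)}$ transports unchanged. Relabeling $\lambda' = (b_1,\dots,b_r)\vdash b$ and substituting yields the claimed identity. There is no genuine obstacle here, as all of the analytic and combinatorial work (Kondo's formula, Hasse--Davenport, the orthogonality of basic characters, and the evaluation of $\chi^\lambda_{\{f_1,\dots,f_r\}}$ at $J_\mu(x)$ via Green polynomials) has already been done in deriving \eqref{eq:matrix-kloosterman-sum-in-terms-of-basic-characters} and the preceding theorem; the only care needed is in tracking the combined sign $(-1)^{(k-1)n+\ell(\lambda)(k-1)}$ and verifying the reindexing.
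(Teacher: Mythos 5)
Your proposal is correct and follows exactly the route the paper takes: the paper derives this theorem as an immediate consequence of the expansion \eqref{eq:matrix-kloosterman-sum-in-terms-of-basic-characters} together with the preceding evaluation of $\KloostermanSumClassFunction_{\lambda}\left(J_{\mu}\left(x\right)\right)$, summing over $\lambda \vdash n$ and reindexing the nonvanishing terms by partitions of $b$. Your bookkeeping of the sign and the bijection $\lambda \leftrightarrow \lambda'$ (with $\ell\left(\lambda\right) = \ell\left(\lambda'\right)$) is accurate.
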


\subsection{Expression in terms of modified Hall--Littlewood polynomials and twisted Kloosterman sheaves}

In this section, we use modified Hall--Littlewood polynomials and twisted Kloosterman sheaves in order to reformulate \Cref{thm:matrix-kloosterman-sum-at-jordan-using-green-functions}. We start with recalling results regarding the number of flags of type $\lambda$ fixed by a Jordan matrix $J_{\mu}\left(\xi\right)$. We then discuss the definition of modified Hall--Littlewood polynomials and obtain formulas related to the previous discussion about flags fixed by a Jordan matrix, and a formula similar to the one appearing in \Cref{thm:matrix-kloosterman-sum-at-jordan-using-green-functions}. We then discuss the notion of twisted Kloosterman sheaves defined by Katz \cite{katz2016gauss}, and the properties of the characteristic polynomials of the action of the geometric Frobenius on a stalk. Finally, in the last section, we use these results to express $\ExoticKloosterman\left(\alpha, \fieldCharacter, J_{\mu}\left(x\right)\right)$ using modified Hall--Littlewood polynomials and the roots of $\ExoticKloosterman^{\finiteFieldExtension{a}}\left(\alpha, \fieldCharacter\right)$ at $\xi$.

\subsubsection{Weak flags corresponding to a composition type}

Let $m$ be a positive integer. A sequence $\left(m_1, \dots, m_t\right)$ is a \emph{weak composition} of $m$ if $m_1 + \dots + m_t = m$ and $m_1, \dots, m_t \ge 0$. A \emph{weak flag of type $\left(m_1,\dots,m_t\right)$} in the vector space $\finiteFieldExtension{a}^m$ is a sequence of $\finiteFieldExtension{a}$-subspaces \begin{equation}\label{eq:flag-sequence}
	\mathcal{F}   \colon 0 \subseteq W_1 \subseteq W_2 \subseteq \dots \subseteq W_t = \finiteFieldExtension{a}^m,\end{equation}
such that for every $j$, $$ \dim W_j = m_1 + \dots + m_j.$$ 
The number $t$ is the called the \emph{length} of the weak flag $\mathcal{F}$. A \emph{flag} $\mathcal{F}$ is a weak flag of type $\left(m_1, \dots, m_t\right)$, for some composition $\left(m_1, \dots, m_t\right)$ of $m$, that is, $m_1, \dots, m_t > 0$ and $m_1 + \dots + m_t = m$.

The group $\GL_m\left(\finiteFieldExtension{a}\right)$ acts on the set of weak flags of type $\left(m_1,\dots,m_t\right)$, as follows. If $g \in \GL_m\left(\finiteFieldExtension{a}\right)$ and $\mathcal{F}$ is a weak flag of type $\left(m_1,\dots,m_t\right)$ corresponding to the sequence \eqref{eq:flag-sequence}, then $g \mathcal{F}$ is the weak flag defined by the sequence
$$ g \mathcal{F} \colon 0 \subseteq gW_1 \subseteq gW_2 \subseteq \dots \subseteq gW_t = \finiteFieldExtension{a}^m. $$

For an element $g \in \GL_m\left(\finiteFieldExtension{a}\right)$, let $$ \mathcal{X}^{\finiteFieldExtension{a}}_{\left(m_1,\dots,m_t\right)}\left(g\right) = \left\{ \mathcal{F} \mid \mathcal{F} \text{ is a weak flag of type } \left(m_1,\dots,m_t\right) \text{ in } \finiteFieldExtension{a}^m \mid g \mathcal{F} = \mathcal{F} \right\}.$$ By \cite[Proposition 2.3]{Ram2023}, for any two weak compositions $\left(m_1,\dots,m_t\right)$ and $\left(m'_1,\dots,m'_{t'}\right)$ such that the trimming of zeros and reordering in decreasing order of $\left(m_1,\dots,m_t\right)$ and $\left(m'_1,\dots,m'_{t'}\right)$ results in the same partition, we have the equality
\begin{equation*}
	\sizeof{\mathcal{X}_{\left(m_1,\dots,m_t\right)}^{\finiteFieldExtension{a}}\left(g\right)} = \sizeof{\mathcal{X}_{\left(m'_1,\dots,m'_{t'}\right)}^{\finiteFieldExtension{a}}\left(g\right)}.
\end{equation*}

For any two partitions $\mu \vdash m$ and $\lambda \vdash m$ denote, as in \cite[Theorem 3.1]{Kirillov00},
$$\mathcal{P}_{\mu, \lambda}\left(t\right) = \sum_{\rho \vdash m} K_{\rho, \lambda} K_{\rho, \mu}\left(t\right),$$
where $K_{\rho, \lambda}$ is the Kostka number \cite[Page 101]{macdonald1998symmetric} and $K_{\rho, \mu}\left(t\right)$ is the Kostka--Foulkes polynomial \cite[Page 242]{macdonald1998symmetric} (recall that $K_{\rho, \lambda} = K_{\rho, \lambda}\left(1\right)$). By \cite[Section 1.4]{Kirillov00}, we have
\begin{theorem}\label{thm:number-of-flags-fixed-by-a-jordan-matrix}For any two partitions $\lambda \vdash m$ and $\mu \vdash m$, and any $\xi \in \multiplicativegroup{\finiteFieldExtension{a}}$,
	$$\sizeof{\mathcal{X}_{\lambda}^{\finiteFieldExtension{a}}\left(J_{\mu}\left(\xi\right)\right)} =  q^{a \mathfrak{n}\left(\mu\right)} \mathcal{P}_{\mu, \lambda}\left(q^{-a}\right),$$
	where the partition $\lambda \vdash m$ is regarded as a composition of $m$ (see the definition of $\mathfrak{n}\left(\mu\right)$ below).
\end{theorem}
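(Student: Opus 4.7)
The plan is to recognize $\sizeof{\mathcal{X}_{\lambda}^{\finiteFieldExtension{a}}\left(J_{\mu}\left(\xi\right)\right)}$ as the point count (over $\finiteFieldExtension{a}$; write $Q = q^a$) of a Spaltenstein variety, and to invoke the classical Hall algebra and Kostka--Foulkes machinery. First I would reduce to the nilpotent case: since $J_{\mu}\left(\xi\right) = \xi \IdentityMatrix{m} + N$ with $N$ nilpotent of Jordan type $\mu$, a subspace is $J_{\mu}\left(\xi\right)$-invariant if and only if it is $N$-invariant, so we may assume $\xi = 0$ and count $\finiteFieldExtension{a}$-subspaces of $\finiteFieldExtension{a}^m$ invariant under a fixed nilpotent of type $\mu$ and whose successive dimension jumps match $\lambda$.

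Next I would translate this into a class function identity on $\GL_m\left(\finiteFieldExtension{a}\right)$. The function $g \mapsto \sizeof{\mathcal{X}_{\lambda}^{\finiteFieldExtension{a}}\left(g\right)}$ is the character of $\Ind{P_\lambda}{\GL_m\left(\finiteFieldExtension{a}\right)}{1}$ evaluated at $g$, where $P_\lambda$ is the standard parabolic of block sizes $\lambda$. Under the Hall--Littlewood characteristic map from class functions on $\GL_m\left(\finiteFieldExtension{a}\right)$ to symmetric functions, this induced character is sent to the complete homogeneous symmetric function $h_\lambda$, while the indicator function of the conjugacy class of $N$, normalized by the centralizer order (whose dominant $Q$-power is $Q^{m + 2\mathfrak{n}(\mu)}$), corresponds up to explicit scaling to a modified Hall--Littlewood function $\tilde{P}_\mu\left(x; Q^{-1}\right)$.

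I would then apply the two classical expansions $h_\lambda = \sum_{\rho \vdash m} K_{\rho, \lambda}\, s_\rho$ in the Schur basis and $s_\rho = \sum_{\mu \vdash m} K_{\rho, \mu}\left(t\right) P_\mu\left(x; t\right)$ in the Hall--Littlewood basis at $t = Q^{-1}$ (as on \cite[page 242]{macdonald1998symmetric}). Pairing the resulting expansion of $h_\lambda$ in the Hall--Littlewood basis against the characteristic of the class indicator of $N$, and using the orthogonality of Hall--Littlewood polynomials under the Hall inner product, the double sum collapses to $\sum_{\rho \vdash m} K_{\rho, \lambda}\, K_{\rho, \mu}\left(Q^{-1}\right) = \mathcal{P}_{\mu, \lambda}\left(Q^{-1}\right)$. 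The prefactor $Q^{\mathfrak{n}(\mu)}$ should then emerge by combining the centralizer normalization with the powers of $t$ separating $P_\mu\left(x; t\right)$ from $\tilde{P}_\mu\left(x; t\right)$ and with $\sizeof{\GL_m\left(\finiteFieldExtension{a}\right)}$.

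The main obstacle is the bookkeeping of normalizations. Several closely related versions of Hall--Littlewood polynomials and Kostka--Foulkes polynomials coexist in the literature, differing by powers of $t$, by the substitution $t \leftrightarrow t^{-1}$, and by the statistics $\mathfrak{n}(\lambda)$ or $\mathfrak{n}(\mu)$. Pinning down conventions so that the outcome matches $\mathcal{P}_{\mu, \lambda}\left(t\right)$ as defined above, and verifying that no stray factor of $Q$ survives, is where most of the care is required; this is precisely what is carried out in \cite[Section 1.4]{Kirillov00}, which I would invoke directly for the final identification rather than reproduce the calculation here.
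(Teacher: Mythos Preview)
The paper does not prove this statement at all: it is stated as a citation, prefaced by ``By \cite[Section 1.4]{Kirillov00}, we have'' and used as a black box. Your proposal is therefore strictly more than what the paper offers, and since you also conclude by invoking \cite[Section 1.4]{Kirillov00} for the final identification, your approach and the paper's are the same at the level of what is actually asserted.

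Your sketch of the underlying argument is the standard one and is correct in outline: the reduction from $J_{\mu}(\xi)$ to the nilpotent $N$ of type $\mu$ is valid since $\xi\IdentityMatrix{m}$ preserves every subspace; the identification of $g\mapsto\sizeof{\mathcal{X}_{\lambda}^{\finiteFieldExtension{a}}(g)}$ with the character of $\Ind{P_\lambda}{\GL_m(\finiteFieldExtension{a})}{1}$ is exactly Frobenius' fixed-point formula for a permutation character; the image of that induced character under the characteristic map is $h_\lambda$; and the double expansion through Schur then Hall--Littlewood produces $\sum_\rho K_{\rho,\lambda}K_{\rho,\mu}(Q^{-1})=\mathcal{P}_{\mu,\lambda}(Q^{-1})$. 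One small wording point: after writing ``assume $\xi=0$'' you should not literally work inside $\GL_m(\finiteFieldExtension{a})$, since $N$ is not invertible; rather, keep $J_\mu(\xi)\in\GL_m(\finiteFieldExtension{a})$ (a unipotent times a scalar) for the character-theoretic step and use that its invariant flags coincide with those of $N$. Your own caveat about normalization bookkeeping is well placed: extracting the exact prefactor $Q^{\mathfrak{n}(\mu)}$ from the Hall inner product and the centralizer order is the only nontrivial step, and it is precisely what the cited section of Kirillov carries out.
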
 

\subsubsection{Modified Hall--Littlewood polynomials}\label{subsec:hall-littlewood}

For any positive integer $b$ and any partition $\mu \vdash b$, the modified Hall--Littlewood polynomial with $k$ variables is defined by the formula \cite[Section 3.4.7]{Haiman2003}, \cite[Page 12]{DesarmenienLeclercThibon1994}
$$ \mathrm{\tilde{H}}_{\mu}\left(x_1,\dots,x_k;t\right) = t^{\mathfrak{n}\left(\mu\right)} \sum_{\rho \vdash b} K_{\rho, \mu}\left(t^{-1}\right) s_{\rho}\left(x_1,\dots,x_k\right),$$
where $s_{\rho}\left(x_1,\dots,x_k\right)$ is the Schur polynomial with $k$ variables corresponding to the partition $\rho \vdash b$, and for $\mu = \left(b'_1, \dots, b'_t\right) \vdash b$,
$$\mathfrak{n}\left(\mu\right) = \sum_{j=1}^t \left(j-1\right) b'_j.$$
Let us mention that in \cite[Formula (12)]{DesarmenienLeclercThibon1994}, the authors first consider the polynomial $$Q'_\mu\left(x_1,\dots,x_k, t\right) = \sum_{\rho \vdash b} K_{\rho, \mu}\left(t\right) s_{\rho}\left(x_1,\dots,x_k\right) = t^{\mathfrak{n}\left(\mu\right)} \mathrm{\tilde{H}}_{\mu}\left(x_1,\dots,x_k;t^{-1}\right),$$
but later on Page 12, they consider $$\tilde{Q}'_{\mu}\left(x_1,\dots,x_k;t\right) = t^{\mathfrak{n}\left(\mu\right)} Q'_\mu\left(x_1,\dots,x_k, t^{-1}\right)$$ which agrees with $\mathrm{\tilde{H}}_{\mu}\left(x_1,\dots,x_k;t\right)$. In \cite[Corollary 3.4.12]{Haiman2003}, $\mathrm{H}_{\mu}$ is the same as $Q'_\mu$ defined above, and later in \cite[Section 3.4.7]{Haiman2003}, $\mathrm{\tilde{H}}_{\mu}$ is the same as the definition here.

We will give two other expressions for $\mathrm{\tilde{H}}_{\mu}\left(x_1,\dots,x_k;t\right)$. The first one expresses it in terms of weak flags fixed by a Jordan matrix of the form $J_{\mu}\left(\xi\right)$. By \cite[Page 101]{macdonald1998symmetric}, we have that for any partition $\rho \vdash b$,
$$s_{\rho}\left(x_1,\dots,x_k\right) = \sum_{\lambda \vdash b} K_{\rho, \lambda} m_{\lambda}\left(x_1,\dots, x_k\right).$$
where $m_{\lambda}\left(x_1,\dots,x_k\right)$ is the monomial symmetric polynomial corresponding to the partition $\lambda$.
Therefore,
$$\mathrm{\tilde{H}}_{\mu}\left(x_1,\dots,x_k;t\right) = t^{\mathfrak{n}\left(\mu\right)} \sum_{\lambda \vdash b} \mathcal{P}_{\mu, \lambda}\left(t^{-1}\right) m_{\lambda}\left(x_1,\dots,x_k\right).$$
In the special case where $t = q^a$ for a positive integer $a$, we get by \Cref{thm:number-of-flags-fixed-by-a-jordan-matrix} that for any $\xi \in \multiplicativegroup{\finiteFieldExtension{a}}$ and any $\mu \vdash b$,
\begin{equation}\label{eq:sum-of-monomials-with-fixed-flags-coefficients}
	\mathrm{\tilde{H}}_{\mu}\left(x_1,\dots,x_k;q^a\right) = \sum_{\lambda \vdash b} \# \left\{ \mathcal{F} \text{ flag of type } \lambda \text{ in } \finiteFieldExtension{a}^b \mid J_{\mu}\left(\xi\right) \mathcal{F} = \mathcal{F} \lambda \right\} \cdot m_{\lambda}\left(x_1,\dots,x_k\right).
\end{equation}

The second expression we give for $\mathrm{\tilde{H}}_{\mu}\left(x_1,\dots,x_k;t\right)$, expresses it in terms of power sum polynomials. For any $m$, let $p_m$ be the $m$-th power sum polynomial with $k$ variables
$$ p_m\left(x_1,\dots,x_k\right) = \sum_{i=1}^k x_i^m,$$ and for any partition $\lambda = \left(b_1, \dots, b_t\right) \vdash b$, denote $$p_\lambda\left(x_1,\dots,x_k\right) = \prod_{j=1}^t p_{b_j}\left(x_1,\dots,x_k\right).$$
Recall the following identity \cite[Page 177, Formula (9.5)]{macdonald1998symmetric}:
$$s_{\rho}\left(x_1,\dots,x_k\right) = \sum_{\lambda \vdash b} \frac{\chi^{\rho}_{\lambda}}{z_{\lambda}} p_{\lambda}\left({x_1,\dots,x_k}\right),$$
where $\rho$ is a partition of $b$, and where for any $\lambda \vdash b$,  $\chi_{\lambda}^{\rho}$ is the value of the character of the symmetric group $\SymmetricGroup_b$ corresponding to the partition $\rho$ at the conjugacy class corresponding to cycles of type $\lambda$. Then we have
$$ \mathrm{\tilde{H}}_{\mu}\left(x_1,\dots,x_k;t\right) = t^{\mathfrak{n}\left(\mu\right)} \sum_{\lambda \vdash b} \left(\sum_{\rho \vdash b} \chi_{\lambda}^{\rho} K_{\rho, \mu}\left(t^{-1}\right) \right) \frac{1}{z_\lambda}p_{\lambda}\left(x_1,\dots,x_k\right).$$
By \cite[Page 247, Formula (7.6')]{macdonald1998symmetric},
$$ X_{\lambda}^{\mu}\left(t\right) = \sum_{\rho \vdash b} \chi_{\lambda}^{\rho} K_{\rho, \mu}\left(t\right),$$
where \cite[Page 247, Formula (7.8)]{macdonald1998symmetric} $$Q_{\lambda}^{\mu}\left(t\right) = t^{\mathfrak{n}\left(\mu\right)} X_{\lambda}^{\mu}\left(t^{-1}\right).$$
Hence, 
\begin{equation}\label{eq:hall-littlewood-polynomial-in-terms-of-power-sum-polynomials}
	\mathrm{\tilde{H}}_{\mu}\left(x_1,\dots,x_k;t\right) = \sum_{\lambda \vdash b} \frac{Q_{\lambda}^{\mu}\left(t\right)}{z_\lambda} \cdot p_{\lambda}\left(x_1,\dots,x_k\right).
\end{equation}
This formula is very similar to the one in \Cref{thm:matrix-kloosterman-sum-at-jordan-using-green-functions}. We will use the roots of the Kloosterman sheaf $\ExoticKloosterman^{\finiteFieldExtension{a}}\left(\alpha, \fieldCharacter\right)$ at $\xi$ to establish a relation between these formulas.

\subsubsection{The roots of $\ExoticKloosterman^{\finiteFieldExtension{a}}\left(\alpha, \fieldCharacter\right)$ at $\xi$}\label{subsec:roots-of-frobenius}

For any $a \ge 1$, consider the following diagram of schemes over $\finiteFieldExtension{a}$:
$$ \xymatrix{ & \multiplcativeScheme^k \ar[ld]_{\mathrm{mul}}  \ar[rd]^{\mathrm{add}} \\
	\multiplcativeScheme & & \affineLine }.$$
Let $\ell$ be a prime number different from the characteristic of $\finiteField$. Fix a non-trivial additive character $\fieldCharacter \colon \multiplicativegroup{\finiteFieldExtension{a}} \to \multiplicativegroup{\ladicnumbers}$. Let $\alpha \colon \left(\multiplicativegroup{\finiteField}\right)^k \to \multiplicativegroup{\ladicnumbers}$ be a multiplicative character. For any $a \ge 1$, the additive character $\fieldCharacter_{a}$ gives rise to an Artin--Schrier local system $\artinScrier_{\fieldCharacter_{a}}$ on $\affineLine$. The multiplicative character $\alpha \circ \FieldNorm{a}{1}$ gives rise to a Kummer local system $\mathcal{L}_{\alpha \circ \FieldNorm{a}{1}}$ on $\multiplcativeScheme$. Let $\ExoticKloosterman^{\finiteFieldExtension{a}}\left(\alpha, \fieldCharacter\right)$ be the twisted Kloosterman sheaf corresponding to the characters $\alpha$ and $\fieldCharacter$, that is,
$$ \ExoticKloosterman^{\finiteFieldExtension{a}}\left(\alpha, \fieldCharacter\right) = \convolutionWithCompactSupport \mathrm{mul}_{!} \left(\mathrm{add}^{\ast} \artinScrier_{\fieldCharacter_{a}} \otimes \mathcal{L}_{\alpha \circ \FieldNorm{a}{1}}\right)\left[k-1\right].$$ 

We will be mostly interested in $\Frobenius_{\xi} \mid \ExoticKloosterman^{\finiteFieldExtension{a}}\left(\alpha, \fieldCharacter\right)_{\xi}$, that is, the action of the geometric Frobenius on the stalk at $\xi \in \multiplicativegroup{\finiteFieldExtension{a}}$. Fix an embedding $\ladicnumbers \hookrightarrow \cComplex$. By \cite[Page 49, Theorem 4.1.1, part 2]{katz2016gauss}, we have that for any $m \ge 1$, \begin{equation}\label{eq:trace-of-frobenius-to-the-m}
	\trace \left(\Frobenius^m_{\xi} \mid \ExoticKloosterman^{\finiteFieldExtension{a}}\left(\alpha, \fieldCharacter\right)_{\xi}\right) = \left(-1\right)^{k-1} \ExoticKloosterman_{am}\left(\alpha, \fieldCharacter ,\xi\right).
\end{equation}

Let $\mathrm{Char}_{\Frobenius_{\xi}}\left(T\right)$ be the characteristic polynomial of $\Frobenius_{\xi} \mid \ExoticKloosterman^{\finiteFieldExtension{a}}\left(\alpha, \fieldCharacter\right)_{\xi}$. By \cite[Page 49, Theorem 4.1.1, part 1]{katz2016gauss}, it is a polynomial of degree $k$. Let $\omega_1, \dots, \omega_k \in \cComplex$ be its roots. We call $\omega_1, \dots, \omega_k$ \emph{the roots of $\ExoticKloosterman^{\finiteFieldExtension{a}}\left(\alpha, \fieldCharacter\right)$ at $\xi$}. By \cite[Page 49, Theorem 4.1.1, part 1]{katz2016gauss}, we have that $\abs{\omega_1} = \dots = \abs{\omega_k} = q^{\frac{a\left(k-1\right)}{2}}$.

We can rewrite \eqref{eq:trace-of-frobenius-to-the-m} as 
\begin{equation}
	\label{eq:power-sum-polynomial-m}
	p_m\left(\omega_1,\dots,\omega_k\right) = \left(-1\right)^{k-1} \ExoticKloosterman_{am}\left(\alpha, \fieldCharacter ,\xi\right).
\end{equation}

Alternatively, we can define the roots $\omega_1, \dots, \omega_k$ of $\ExoticKloosterman^{\finiteFieldExtension{a}}\left(\alpha, \fieldCharacter\right)$ at $\xi$ avoiding Kloosterman sheaves, using the $L$-function associated to the family $\left(\ExoticKloosterman_{am}\left(\alpha, \fieldCharacter, \xi\right)\right)_{m=1}^{\infty}$. To do so, define
$$ L\left(T, \ExoticKloosterman^{\finiteFieldExtension{a}}\left(\alpha, \fieldCharacter, \xi\right)\right) = \exp\left(\sum_{m=1}^{\infty} \ExoticKloosterman_{am}\left(\alpha,\fieldCharacter,\xi\right) \frac{T^m}{m}\right).$$
Then $L\left(T, \ExoticKloosterman^{\finiteFieldExtension{a}}\left(\alpha, \fieldCharacter, \xi \right)\right)^{\left(-1\right)^k}$ is a polynomial of degree $k$ with constant coefficient $1$ (see the introduction of \cite{fu2005functions}). We have
$$L\left(T, \ExoticKloosterman^{\finiteFieldExtension{a}}\left(\alpha, \fieldCharacter, \xi \right)\right)^{\left(-1\right)^k} = \prod_{i=1}^k \left(1 - \omega_i T \right),$$
and \eqref{eq:power-sum-polynomial-m} holds.

\subsubsection{Formula in terms of modified Hall--Littlewood polynomials}

Let $n = ab$ and let $x \in \GL_a\left(\finiteField\right)$ be a regular elliptic element with eigenvalues $\{\xi, \xi^q, \dots, \xi^{q^{a-1}}\}$, where $\xi \in \multiplicativegroup{\finiteFieldExtension{a}}$. Let $\omega_1, \dots, \omega_k$ be the roots of $\ExoticKloosterman^{\finiteFieldExtension{a}}\left(\alpha, \fieldCharacter\right)$ at $\xi$, as defined in the previous section. Then by \eqref{eq:power-sum-polynomial-m}, we have that for any partition $\lambda = \left(b_1,\dots,b_t\right) \vdash b$, $$p_{\lambda}\left(\omega_1,\dots,\omega_k\right) = \left(-1\right)^{\left(k-1\right) \ell\left(\lambda\right)} \prod_{j=1}^t \ExoticKloosterman_{a b_j}\left(\alpha, \fieldCharacter, \xi\right).$$
By \Cref{thm:matrix-kloosterman-sum-at-jordan-using-green-functions}, for any partition $\mu \vdash b$ $$\ExoticKloosterman\left(\alpha, \fieldCharacter, J_{\mu}\left(x\right) \right) = \left(-1\right)^{\left(k-1\right)n} q^{\left(k-1\right)\binom{n}{2}} \sum_{\lambda \vdash b}  \frac{Q_{\lambda}^{\mu}\left(q^a\right)}{z_{\lambda}} p_{\lambda}\left(\omega_1,\dots,\omega_k\right). $$
Therefore, we get from \eqref{eq:hall-littlewood-polynomial-in-terms-of-power-sum-polynomials}, the following reformulation of \Cref{thm:matrix-kloosterman-sum-at-jordan-using-green-functions}.
\begin{theorem}\label{thm:explicit-expression-for-jordan-block-with-hall-littlewood-polynomial}For any partition $\mu \vdash b$,
	$$\ExoticKloosterman\left(\alpha, \fieldCharacter, J_{\mu}\left(x\right) \right) = \left(-1\right)^{\left(k-1\right)n} q^{{\left(k-1\right)\binom{n}{2}}} \cdot \mathrm{\tilde{H}}_{\mu}\left(\omega_1,\dots,\omega_k;q^{a}\right).$$
\end{theorem}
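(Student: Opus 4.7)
The statement is essentially a direct consequence of \Cref{thm:matrix-kloosterman-sum-at-jordan-using-green-functions} combined with the two key identities established in this subsection: the trace formula \eqref{eq:power-sum-polynomial-m} relating power sums of the roots $\omega_1, \dots, \omega_k$ to the twisted Kloosterman sums over extensions, and the power-sum expansion \eqref{eq:hall-littlewood-polynomial-in-terms-of-power-sum-polynomials} of the modified Hall--Littlewood polynomial. So my plan is simply to unwind these three identities and check that they fit together, with the signs canceling cleanly.

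First, I would start with the right-hand side of \Cref{thm:matrix-kloosterman-sum-at-jordan-using-green-functions}, which expresses $\ExoticKloosterman(\alpha, \fieldCharacter, J_{\mu}(x))$ as a sum over $\lambda = (b_1, \dots, b_r) \vdash b$ of $(-1)^{(k-1)n} (-1)^{\ell(\lambda)(k-1)} q^{(k-1)\binom{n}{2}} \tfrac{Q_{\lambda}^{\mu}(q^a)}{z_\lambda}\prod_{j=1}^r \ExoticKloosterman_{a b_j}(\alpha, \fieldCharacter, \xi)$. I then apply \eqref{eq:power-sum-polynomial-m} to each factor $\ExoticKloosterman_{ab_j}(\alpha, \fieldCharacter, \xi) = (-1)^{k-1} p_{b_j}(\omega_1, \dots, \omega_k)$, so that the product over $j$ becomes $(-1)^{\ell(\lambda)(k-1)} p_{\lambda}(\omega_1, \dots, \omega_k)$.

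The two factors of $(-1)^{\ell(\lambda)(k-1)}$ square to $1$, so what remains inside the sum is $\tfrac{Q_{\lambda}^{\mu}(q^a)}{z_\lambda} p_{\lambda}(\omega_1, \dots, \omega_k)$, multiplied by the $\lambda$-independent prefactor $(-1)^{(k-1)n} q^{(k-1)\binom{n}{2}}$. By the power-sum expansion \eqref{eq:hall-littlewood-polynomial-in-terms-of-power-sum-polynomials} of $\mathrm{\tilde{H}}_\mu$, the sum over $\lambda \vdash b$ equals $\mathrm{\tilde{H}}_{\mu}(\omega_1, \dots, \omega_k; q^a)$, giving the claimed identity.

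There is essentially no obstacle beyond careful bookkeeping of the signs; the only point of caution is matching the sign $(-1)^{\ell(\lambda)(k-1)}$ that arises from the product of $r = \ell(\lambda)$ copies of $(-1)^{k-1}$ against the sign already present in the formula of \Cref{thm:matrix-kloosterman-sum-at-jordan-using-green-functions}. Once this cancellation is verified, the remaining content is a direct transcription. In particular, no new representation-theoretic, sheaf-theoretic, or combinatorial input beyond what has already been developed in this subsection is needed.
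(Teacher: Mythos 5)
Your proposal is correct and follows exactly the paper's own derivation: invoke \Cref{thm:matrix-kloosterman-sum-at-jordan-using-green-functions}, convert each product $\prod_j \ExoticKloosterman_{ab_j}\left(\alpha,\fieldCharacter,\xi\right)$ into $\left(-1\right)^{\left(k-1\right)\ell\left(\lambda\right)} p_{\lambda}\left(\omega_1,\dots,\omega_k\right)$ via \eqref{eq:power-sum-polynomial-m}, observe the signs cancel, and identify the resulting sum with $\mathrm{\tilde{H}}_{\mu}$ via \eqref{eq:hall-littlewood-polynomial-in-terms-of-power-sum-polynomials}. The sign bookkeeping you flag is handled identically in the paper, so nothing further is needed.
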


When $\mu = \left(b\right)$, that is, when $J_{\mu}\left(x\right)$ is a regular element, we have that $$\mathrm{\tilde{H}}_{\mu}\left(x_1,\dots,x_k;t\right) = h_b\left(x_1,\dots,x_k\right),$$ where $$h_b\left(x_1,\dots,x_k\right) = \sum_{\lambda \vdash b} m_{\lambda}\left(x_1,\dots,x_k\right)$$ is the complete homogeneous symmetric polynomial of degree $b$. In this special case, we have the following corollary. It was proved in \cite[Section 5E]{erdelyi2021matrix} for the special case $a=1$, $k = 2$ and $\alpha = 1$.
\begin{corollary}\label{cor:symmetric-powers}
	$$\ExoticKloosterman\left(\alpha, \fieldCharacter, J_{(b)}\left(x\right) \right) = \left(-1\right)^{\left(k-1\right)n} 	q^{{\left(k-1\right)\binom{n}{2}}} \cdot \trace\left(\Frobenius \mid_{\xi}, \operatorname{Sym}^b \ExoticKloosterman^{\finiteFieldExtension{a}}\left(\alpha, \fieldCharacter\right)_{\xi}\right).$$
\end{corollary}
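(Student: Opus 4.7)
The plan is to derive \Cref{cor:symmetric-powers} as the specialization of \Cref{thm:explicit-expression-for-jordan-block-with-hall-littlewood-polynomial} to the partition $\mu = \left(b\right)$. Two inputs are required: first, the identity $\mathrm{\tilde{H}}_{\left(b\right)}\left(x_1,\dots,x_k; t\right) = h_b\left(x_1,\dots,x_k\right)$ between the modified Hall--Littlewood polynomial at $\mu = \left(b\right)$ and the complete symmetric polynomial; and second, the identification of $h_b\left(\omega_1,\dots,\omega_k\right)$ with the trace of Frobenius on the $b$-th symmetric power of the stalk $\ExoticKloosterman^{\finiteFieldExtension{a}}\left(\alpha,\fieldCharacter\right)_{\xi}$.

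For the first input, I would argue directly from the monomial expansion \eqref{eq:sum-of-monomials-with-fixed-flags-coefficients}. The Jordan block $J_{\left(b\right)}\left(\xi\right) \in \GL_b\left(\finiteFieldExtension{a}\right)$ has as its unique invariant subspace of dimension $j$ the span of the first $j$ standard basis vectors, for each $0 \le j \le b$. Consequently, for every composition of $b$ there is exactly one invariant flag of that type, and hence for each partition $\lambda \vdash b$ (viewed as a composition) the count $\#\left\{\mathcal{F} \text{ flag of type } \lambda \text{ in } \finiteFieldExtension{a}^b \mid J_{\left(b\right)}\left(\xi\right)\mathcal{F} = \mathcal{F} \right\}$ equals $1$. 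Substituting into \eqref{eq:sum-of-monomials-with-fixed-flags-coefficients} yields
$$\mathrm{\tilde{H}}_{\left(b\right)}\left(x_1,\dots,x_k; q^a\right) = \sum_{\lambda \vdash b} m_{\lambda}\left(x_1,\dots,x_k\right) = h_b\left(x_1,\dots,x_k\right).$$
Since this equality holds for infinitely many values of $t = q^a$ (letting $a$ vary, or observing that both sides are polynomials in $t$ agreeing on an infinite set), it holds as an identity of polynomials in $t$.

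For the second input, I would invoke the standard linear-algebraic fact that if a linear operator $T$ on a finite-dimensional vector space $V$ has eigenvalues $\omega_1, \dots, \omega_k$ (with multiplicity), then the induced operator $\operatorname{Sym}^b T$ on $\operatorname{Sym}^b V$ has eigenvalues $\omega_{i_1}\omega_{i_2}\cdots\omega_{i_b}$ ranging over all $1 \le i_1 \le i_2 \le \dots \le i_b \le k$, whose sum is precisely $h_b\left(\omega_1,\dots,\omega_k\right)$. Applying this to $T = \Frobenius_{\xi}$ acting on the $k$-dimensional stalk $\ExoticKloosterman^{\finiteFieldExtension{a}}\left(\alpha, \fieldCharacter\right)_{\xi}$, whose characteristic polynomial has roots $\omega_1, \dots, \omega_k$ by definition (see \Cref{subsec:roots-of-frobenius}), identifies $h_b\left(\omega_1,\dots,\omega_k\right)$ with $\trace\left(\Frobenius \mid_{\xi}, \operatorname{Sym}^b \ExoticKloosterman^{\finiteFieldExtension{a}}\left(\alpha, \fieldCharacter\right)_{\xi}\right)$.

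Combining these two identifications with \Cref{thm:explicit-expression-for-jordan-block-with-hall-littlewood-polynomial} applied to $\mu = \left(b\right)$ gives the corollary. There is no substantive obstacle: the heavy lifting was already done in establishing \Cref{thm:explicit-expression-for-jordan-block-with-hall-littlewood-polynomial}, and what remains is purely unwinding definitions on the symmetric-function side. If anything, the most delicate point is ensuring that the flag-count computation for a single Jordan block really does yield exactly one flag of each composition type, which follows from the fact that the invariant-subspace lattice of a single Jordan block is the chain of coordinate subspaces.
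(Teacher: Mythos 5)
Your proof is correct and follows essentially the same route as the paper: specialize \Cref{thm:explicit-expression-for-jordan-block-with-hall-littlewood-polynomial} to $\mu = \left(b\right)$, identify $\mathrm{\tilde{H}}_{\left(b\right)}$ with the complete symmetric polynomial $h_b$, and use the standard fact that $h_b$ of the eigenvalues computes the trace on the $b$-th symmetric power. The only difference is that you justify $\mathrm{\tilde{H}}_{\left(b\right)} = h_b$ via the flag count (each composition type admits exactly one invariant flag for a single Jordan block), a detail the paper states without proof.
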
 
As before, using \Cref{thm:multiplicativity-property} we are able to deduce a formula for a twisted matrix Kloosterman sum of a general element in terms of modified Hall--Littlewood polynomials.
\begin{theorem}\label{thm:hall-littlewood-formula-for-general-elements}
	Let $y_1 \in \GL_{a_1}\left(\finiteField\right)$, $\dots$, $y_s \in \GL_{a_s}\left(\finiteField\right)$ be regular elliptic elements with mutually disjoint eigenvalues over $\algebraicClosure{\finiteField}$. Let $\mu_1, \dots, \mu_s$ be partitions, such that $\sum_{j=1}^s a_j \abs{\mu_j} = n$. For any $j$, let $\{ \eta_j, \eta_j^q, \dots, \eta_j^{q^{a_j-1}} \}$ be the eigenvalues of $y_j$, where $\eta_j \in \multiplicativegroup{\finiteFieldExtension{a_j}}$, and let $\omega_{j1},\dots,\omega_{jk}$ be the roots of $\ExoticKloosterman^{\finiteFieldExtension{a_j}}\left(\alpha, \fieldCharacter\right)$ at $\eta_j$. Denote $y = \diag\left(J_{\mu_1}\left(y_1\right),\dots,J_{\mu_s}\left(y_s\right)\right)$. Then
	$$ \ExoticKloosterman\left(\alpha, \fieldCharacter, y \right) = \left(-1\right)^{\left(k-1\right)n} q^{\left(k-1\right)\binom{n}{2}} \prod_{j=1}^s \mathrm{\tilde{H}}_{\mu_j}\left(\omega_{j1}, \dots, \omega_{jk} ; q^{a_j}\right).$$
\end{theorem}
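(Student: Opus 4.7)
The plan is to deduce this theorem by iterating the multiplicativity property (\Cref{thm:multiplicativity-property}) to reduce the computation to a product of matrix Kloosterman sums of single Jordan blocks $J_{\mu_j}(y_j)$, and then invoking \Cref{thm:explicit-expression-for-jordan-block-with-hall-littlewood-polynomial} to express each factor via a modified Hall--Littlewood polynomial. All that remains is bookkeeping of the prefactors.

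First I would observe that the block $J_{\mu_j}(y_j) \in \GL_{n_j}(\finiteField)$, where $n_j = a_j |\mu_j|$, has eigenvalues lying in the Frobenius orbit of $\eta_j$ over $\algebraicClosure{\finiteField}$; thus the blocks $J_{\mu_1}(y_1), \dots, J_{\mu_s}(y_s)$ inherit the disjointness-of-eigenvalues hypothesis from $y_1, \dots, y_s$. Applying \Cref{thm:multiplicativity-property} inductively on $s$ (peeling off one block at a time), one obtains
$$\ExoticKloosterman(\alpha, \fieldCharacter, y) = q^{(k-1) \sum_{1 \le i < j \le s} n_i n_j} \prod_{j=1}^s \ExoticKloosterman\bigl(\alpha, \fieldCharacter, J_{\mu_j}(y_j)\bigr).$$

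Next I would substitute into each factor the formula from \Cref{thm:explicit-expression-for-jordan-block-with-hall-littlewood-polynomial}, which gives
$$\ExoticKloosterman\bigl(\alpha, \fieldCharacter, J_{\mu_j}(y_j)\bigr) = (-1)^{(k-1) n_j} q^{(k-1) \binom{n_j}{2}} \mathrm{\tilde{H}}_{\mu_j}(\omega_{j1}, \dots, \omega_{jk}; q^{a_j}).$$
Combining the sign factors yields $(-1)^{(k-1) \sum_j n_j} = (-1)^{(k-1) n}$. Combining the $q$-exponents uses the elementary identity
$$\binom{n}{2} = \sum_{j=1}^s \binom{n_j}{2} + \sum_{1 \le i < j \le s} n_i n_j,$$
valid for $n = \sum_j n_j$, which matches exactly the prefactor $q^{(k-1)\binom{n}{2}}$ claimed in the theorem.

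There is no substantial obstacle: the content is entirely in the two earlier theorems, and the only step demanding any care is verifying the combinatorial identity for the exponent of $q$ and checking that the disjointness hypothesis is preserved when iterating multiplicativity (so that the hypothesis of \Cref{thm:multiplicativity-property} genuinely applies at each step, rather than only to the initial pairing $y_1, y_2$).
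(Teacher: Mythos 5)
Your proposal is correct and is exactly the argument the paper intends: the theorem is stated as an immediate consequence of iterating \Cref{thm:multiplicativity-property} across the diagonal blocks (whose eigenvalue sets coincide with those of the $y_j$, so disjointness is preserved) and applying \Cref{thm:explicit-expression-for-jordan-block-with-hall-littlewood-polynomial} to each block. Your bookkeeping of the sign and of the exponent via $\binom{n}{2} = \sum_{j}\binom{n_j}{2} + \sum_{i<j} n_i n_j$ is exactly what is needed and is correct.
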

In the special case where $\mu_1 = \left(b_1\right)$, $\dots$, $\mu_s = \left(b_s\right)$, we get a formula for twisted matrix Kloosterman sums of regular elements of $\GL_n\left(\finiteField\right)$ in terms of symmetric powers of twisted Kloosterman sheaves.
\begin{corollary}
	If $y_1, \dots, y_s$ are as in \Cref{thm:hall-littlewood-formula-for-general-elements} and $\sum_{j=1}^s a_j b_j = n$, that is, $y = \diag\left(J_{\left(b_1\right)}\left(y_1\right), \dots, J_{\left(b_s\right)}\left(y_s\right)\right),
	$ is a regular element, then $$ \ExoticKloosterman\left(\alpha, \fieldCharacter, y \right) = \left(-1\right)^{\left(k-1\right)n} q^{\left(k-1\right)\binom{n}{2}} \prod_{j=1}^s \trace\left(\Frobenius \mid_{\eta_j}, \operatorname{Sym}^{b_j} \ExoticKloosterman^{\finiteFieldExtension{a_j}}\left(\alpha, \fieldCharacter\right)_{\eta_j} \right).$$
\end{corollary}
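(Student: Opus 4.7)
The plan is to deduce this corollary immediately from Theorem \ref{thm:hall-littlewood-formula-for-general-elements} by specializing each partition $\mu_j$ to the one-row partition $(b_j)$ and then translating the modified Hall--Littlewood polynomial into a trace of Frobenius on a symmetric power. Concretely, I will first invoke Theorem \ref{thm:hall-littlewood-formula-for-general-elements} with $\mu_j=(b_j)$ to obtain
$$\ExoticKloosterman\left(\alpha,\fieldCharacter,y\right) = \left(-1\right)^{\left(k-1\right)n} q^{\left(k-1\right)\binom{n}{2}} \prod_{j=1}^{s} \mathrm{\tilde{H}}_{\left(b_j\right)}\left(\omega_{j1},\dots,\omega_{jk};q^{a_j}\right).$$

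Next I will use the fact (already noted in the excerpt before Corollary \ref{cor:symmetric-powers}) that $\mathrm{\tilde{H}}_{\left(b\right)}\left(x_1,\dots,x_k;t\right)=h_b\left(x_1,\dots,x_k\right)$, independently of $t$. This is the observation that $K_{\rho,(b)}(t^{-1})$ is nonzero (and equal to $1$) exactly when $\rho = (b)$, so that $\mathrm{\tilde{H}}_{(b)} = s_{(b)} = h_b$. Applying this to each factor turns the right-hand side into $\prod_{j=1}^{s} h_{b_j}\left(\omega_{j1},\dots,\omega_{jk}\right)$.

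Finally, I will identify each $h_{b_j}\left(\omega_{j1},\dots,\omega_{jk}\right)$ with $\trace\left(\Frobenius\mid_{\eta_j},\operatorname{Sym}^{b_j}\ExoticKloosterman^{\finiteFieldExtension{a_j}}\left(\alpha,\fieldCharacter\right)_{\eta_j}\right)$. This is the standard representation-theoretic identity: if $\omega_{j1},\dots,\omega_{jk}$ are the eigenvalues of a linear operator $T$ on a $k$-dimensional vector space $V$ (here $T=\Frobenius\mid_{\eta_j}$ on $\ExoticKloosterman^{\finiteFieldExtension{a_j}}\left(\alpha,\fieldCharacter\right)_{\eta_j}$, whose characteristic polynomial has exactly these roots by the definition in \Cref{subsec:roots-of-frobenius}), then the eigenvalues of $\operatorname{Sym}^{b_j} T$ on $\operatorname{Sym}^{b_j} V$ are the degree-$b_j$ monomials in the $\omega_{j i}$, and their sum is precisely the complete homogeneous symmetric polynomial $h_{b_j}\left(\omega_{j1},\dots,\omega_{jk}\right)$.

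There is no real obstacle here; the corollary is essentially a dictionary statement. The only thing to be a little careful about is that the roots $\omega_{j1},\dots,\omega_{jk}$ defined in \Cref{subsec:roots-of-frobenius} are genuinely the Frobenius eigenvalues on the stalk (so that the symmetric function identity applies in the eigenvalue sense), but this is already built into the definition via the factorization of the characteristic polynomial recorded there. Assembling the three steps produces the claimed formula.
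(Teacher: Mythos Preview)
Your proposal is correct and matches the paper's approach exactly: the paper states this corollary without proof, treating it as the immediate specialization of \Cref{thm:hall-littlewood-formula-for-general-elements} to $\mu_j=(b_j)$ together with the identity $\mathrm{\tilde{H}}_{(b)}=h_b$ already recorded before \Cref{cor:symmetric-powers} and the standard identification of $h_b$ of the Frobenius eigenvalues with the trace on the $b$-th symmetric power.
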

Finally, as a corollary of \Cref{thm:explicit-expression-for-jordan-block-with-hall-littlewood-polynomial}, and of the purity result of twisted Kloosterman sheaves, we obtain upper bounds for twisted matrix Kloosterman sums.
\begin{corollary}\label{cor:upper-bound}
	\begin{enumerate}
		\item For any partition $\mu \vdash b$,
		$$\abs{\ExoticKloosterman\left(\alpha, \fieldCharacter, J_{\mu}\left(x\right) \right)} \le q^{\left(k-1\right) \frac{n^2}{2}} \cdot  \# {\left\{ \mathcal{F} \text{ is a weak flag in } \finiteFieldExtension{a}^b \text{ of length } k \mid J_{\mu}\left(\xi\right) \mathcal{F} = \mathcal{F} \right\}}.$$
		\item In the special case where $\mu = \left(b\right)$, we have
		$$\abs{\ExoticKloosterman\left(\alpha, \fieldCharacter, J_{\left(b\right)}\left(x\right) \right)} \le q^{\left(k-1\right) \frac{n^2}{2}}  \cdot \binom{b + k - 1}{b}.$$
	\end{enumerate}
\end{corollary}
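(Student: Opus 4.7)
The plan is to combine \Cref{thm:explicit-expression-for-jordan-block-with-hall-littlewood-polynomial} with the purity of the twisted Kloosterman sheaf recalled in \Cref{subsec:roots-of-frobenius}. Starting from
$$\abs{\ExoticKloosterman\left(\alpha, \fieldCharacter, J_{\mu}\left(x\right) \right)} = q^{\left(k-1\right)\binom{n}{2}} \cdot \abs{\mathrm{\tilde{H}}_{\mu}\left(\omega_1,\dots,\omega_k;q^a\right)},$$
the key input is the monomial expansion \eqref{eq:sum-of-monomials-with-fixed-flags-coefficients}, which writes $\mathrm{\tilde{H}}_{\mu}\left(\omega_1,\dots,\omega_k;q^a\right)$ as a $\zIntegers_{\ge 0}$-linear combination of the monomial symmetric polynomials $m_{\lambda}\left(\omega_1,\dots,\omega_k\right)$, with coefficients counting flags of type $\lambda$ in $\finiteFieldExtension{a}^b$ fixed by $J_{\mu}\left(\xi\right)$. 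The triangle inequality gives
$$\abs{\mathrm{\tilde{H}}_{\mu}\left(\omega_1,\dots,\omega_k;q^a\right)} \le \sum_{\lambda \vdash b} \# \left\{ \mathcal{F} \text{ flag of type } \lambda \mid J_{\mu}\left(\xi\right) \mathcal{F} = \mathcal{F}  \right\} \cdot \abs{m_{\lambda}\left(\omega_1,\dots,\omega_k\right)}.$$

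Next, I would invoke the purity statement $\abs{\omega_i} = q^{\frac{a\left(k-1\right)}{2}}$ for each $i$. Each monomial appearing in $m_{\lambda}\left(\omega_1,\dots,\omega_k\right)$ has the form $\omega_{i_1}^{\lambda_1}\cdots \omega_{i_{\ell\left(\lambda\right)}}^{\lambda_{\ell\left(\lambda\right)}}$ with distinct indices, and so has absolute value $q^{\frac{a\left(k-1\right)}{2}\cdot\sizeof{\lambda}} = q^{\frac{n\left(k-1\right)}{2}}$, since $\sizeof{\lambda} = b$ and $n = ab$. The number of such monomials is the multinomial coefficient $\binom{k}{k-\ell\left(\lambda\right), \lambda\left(1\right), \lambda\left(2\right), \dots}$, which is exactly the number of weak compositions $\left(m_1,\dots,m_k\right)$ of $b$ whose associated partition is $\lambda$. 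By Proposition~2.3 of Ram cited above, for each such weak composition the number of fixed weak flags of type $\left(m_1,\dots,m_k\right)$ equals the number of fixed flags of type $\lambda$. Therefore
$$\sum_{\lambda \vdash b} \binom{k}{k-\ell\left(\lambda\right), \lambda\left(1\right), \dots} \cdot \# \left\{ \mathcal{F} \text{ flag of type } \lambda \mid J_{\mu}\left(\xi\right) \mathcal{F} = \mathcal{F}  \right\} = \# \left\{ \mathcal{F} \text{ weak flag of length } k \mid J_{\mu}\left(\xi\right) \mathcal{F} = \mathcal{F}\right\}.$$
Combining these, and using $\binom{n}{2} + n/2 = n^2/2$, yields the first bound.

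For part (2), with $\mu = \left(b\right)$, the matrix $J_{\left(b\right)}\left(\xi\right)$ is a single Jordan block, and its fixed $\finiteFieldExtension{a}$-subspaces of $\finiteFieldExtension{a}^b$ form the unique chain $0 = V_0 \subsetneq V_1 \subsetneq \dots \subsetneq V_b = \finiteFieldExtension{a}^b$ given by the kernels of the powers of the nilpotent part. A weak flag of length $k$ fixed by $J_{\left(b\right)}\left(\xi\right)$ thus amounts to a choice $V_{d_0} \subseteq V_{d_1} \subseteq \dots \subseteq V_{d_k}$ with $d_0 = 0$, $d_k = b$, and $0 \le d_1 \le \dots \le d_{k-1} \le b$. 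The count of such sequences is the classical stars-and-bars number $\binom{b+k-1}{k-1} = \binom{b+k-1}{b}$, which combined with part (1) gives the desired bound. No step is a serious obstacle: the main care needed is to verify that the combinatorial identification of weak compositions with monomials of $m_{\lambda}$ matches the combinatorics of Ram's proposition so that the triangle inequality is not wasted.
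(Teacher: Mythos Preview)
Your proposal is correct and follows essentially the same approach as the paper: both arguments start from \Cref{thm:explicit-expression-for-jordan-block-with-hall-littlewood-polynomial}, invoke the monomial expansion \eqref{eq:sum-of-monomials-with-fixed-flags-coefficients}, use purity of the $\omega_i$ together with the triangle inequality, and reinterpret the resulting combinatorial sum via \cite[Proposition 2.3]{Ram2023} as the count of $J_{\mu}(\xi)$-fixed weak flags of length $k$. The only cosmetic difference is ordering: the paper first rewrites $\mathrm{\tilde{H}}_{\mu}(\omega_1,\dots,\omega_k;q^a)$ as $\sum_{(b_1,\dots,b_k)} \sizeof{\mathcal{X}^{\finiteFieldExtension{a}}_{(b_1,\dots,b_k)}(J_\mu(\xi))}\,\omega_1^{b_1}\cdots\omega_k^{b_k}$ and then applies the triangle inequality, whereas you apply the triangle inequality at the level of $m_\lambda$ and then do the weak-composition bookkeeping; the two are equivalent.
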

\begin{proof}
	Using \eqref{eq:sum-of-monomials-with-fixed-flags-coefficients} and \cite[Proposition 2.3]{Ram2023}, we can rewrite the sum in \Cref{thm:explicit-expression-for-jordan-block-with-hall-littlewood-polynomial} as
	$$ \ExoticKloosterman\left(\alpha, \fieldCharacter, J_{\mu}\left(x\right) \right) = \left(-1\right)^{\left(k-1\right)n} q^{{\left(k-1\right)\binom{n}{2}}} \cdot \sum_{\left(b_1,\dots,b_k\right)} \sizeof{\mathcal{X}_{\left(b_1,\dots,b_k\right)}^{\finiteFieldExtension{a}}\left(J_\mu\left(\xi\right)\right)} \omega_1^{b_1} \dots \omega_k^{b_k},$$
	where the sum goes over all weak compositions $\left(b_1,\dots,b_k\right)$ of $b$ (of length $k$).
	
	Since $\abs{\omega_1} = \dots = \abs{\omega_k} = q^{\frac{a \left(k-1\right)}{2}}$, we have that for any weak composition $\left(b_1,\dots,b_k\right)$ of $b$, $$\abs{\omega_1^{b_1} \dots \omega_k^{b_k}} = q^{\frac{ab \left(k-1\right)}{2}} = q^{\frac{n \left(k-1\right)}{2}}.$$
	Therefore, by the triangle inequality,
	$$ \abs{\ExoticKloosterman\left(\alpha, \fieldCharacter, J_{\mu}\left(x\right) \right)} \le q^{\left(k-1\right)\left( \binom{n}{2} + \frac{n}{2} \right)} \sum_{\left(b_1,\dots,b_k\right)} \sizeof{\mathcal{X}_{\left(b_1,\dots,b_k\right)}^{\finiteFieldExtension{a}}\left(J_{\mu}\left(\xi\right)\right)},$$
	which yields the first part.
	
	For the second part, if $\mu = \left(b\right)$, then there exists a unique $J_{\left(b\right)}\left( \xi \right)$-invariant subspace of $\finiteFieldExtension{a}^b$ of every given dimension $\le b$. Hence, the number of weak flags of length $k$ fixed by $J_{\left(b\right)}\left( \xi \right)$ is the number of weak compositions $\left(b'_1,\dots,b'_k\right)$ of $b$, which is $\binom{b + k - 1}{k-1}$.
\end{proof}
As before, we may use \Cref{thm:multiplicativity-property} to obtain an upper bound for twisted matrix Kloosterman sums of any element.
\begin{corollary}
	Keep the notation of \Cref{thm:hall-littlewood-formula-for-general-elements}. Let $y = \diag\left(J_{\mu_1}\left(y_1\right), \dots, J_{\mu_s}\left(y_s\right)\right)$. Then
		\begin{equation*}
				\abs{\ExoticKloosterman\left(\alpha, \fieldCharacter, y \right)} \le q^{\left(k-1\right) \frac{n^2}{2}} \prod_{j=1}^s \# {\left\{ \mathcal{F}_j \text{ is a weak flag in } \finiteFieldExtension{a_j}^{b_j} \text{ of length } k \mid J_{\mu_j}\left(\eta_j\right) \mathcal{F}_j = \mathcal{F}_j \right\}}.
		\end{equation*}
	In the special case where $\mu_j = \left(b_j\right)$, that is, $y = \diag\left(J_{\left(b_1\right)}\left(y_1\right), \dots, J_{\left(b_s\right)}\left(y_s\right)\right)$ is a regular element, we have
	$$\abs{\ExoticKloosterman\left(\alpha, \fieldCharacter, y \right)} \le q^{\left(k-1\right) \frac{n^2}{2}} \cdot \prod_{j=1}^s \binom{b_j + k - 1}{b_j}.$$
	In particular, if $b_1 = \dots = b_s = 1$, that is, $y = \diag\left(y_1, \dots, y_s\right)$ is a regular semisimple element, then
	$$ \abs{\ExoticKloosterman\left(\alpha, \fieldCharacter, y \right)} \le q^{\left(k-1\right)\frac{n^2}{2}} \cdot k^s.$$
\end{corollary}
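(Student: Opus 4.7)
The plan is to reduce the bound to the single generalized Jordan block case already handled by Corollary \ref{cor:upper-bound}, using the multiplicativity property of twisted matrix Kloosterman sums (Theorem \ref{thm:multiplicativity-property}). First I would observe that the eigenvalues of $J_{\mu_j}\left(y_j\right)$ over $\algebraicClosure{\finiteField}$ coincide with those of $y_j$ (only their multiplicities differ), so the hypothesis that $y_1,\dots,y_s$ have mutually disjoint eigenvalues transfers immediately to $J_{\mu_1}\left(y_1\right),\dots,J_{\mu_s}\left(y_s\right)$. Writing $n_j = a_j \abs{\mu_j}$, an induction on $s$ applied to Theorem \ref{thm:multiplicativity-property} yields
\begin{equation*}
\ExoticKloosterman\left(\alpha,\fieldCharacter,y\right) = q^{\left(k-1\right) \sum_{i<j} n_i n_j} \prod_{j=1}^s \ExoticKloosterman\left(\alpha,\fieldCharacter,J_{\mu_j}\left(y_j\right)\right).
\end{equation*}

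Next I would apply Corollary \ref{cor:upper-bound} to each factor individually. Setting
\[
N_j = \#\left\{\mathcal{F}_j \text{ weak flag of length } k \text{ in } \finiteFieldExtension{a_j}^{b_j} \mid J_{\mu_j}\left(\eta_j\right)\mathcal{F}_j = \mathcal{F}_j\right\},
\]
the single-block bound reads $\abs{\ExoticKloosterman\left(\alpha,\fieldCharacter,J_{\mu_j}\left(y_j\right)\right)} \le q^{\left(k-1\right)\frac{n_j^2}{2}} \cdot N_j$. Taking absolute values in the display above and multiplying, the exponent of $q$ telescopes via the elementary identity $\sum_{i<j} n_i n_j + \frac{1}{2}\sum_j n_j^2 = \frac{1}{2}\left(\sum_j n_j\right)^2 = \frac{n^2}{2}$, producing the first claimed bound $\abs{\ExoticKloosterman\left(\alpha,\fieldCharacter,y\right)} \le q^{\left(k-1\right)\frac{n^2}{2}} \prod_j N_j$.

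For the specializations, I would invoke the second part of Corollary \ref{cor:upper-bound}: when $\mu_j = \left(b_j\right)$ the flag count collapses to $N_j = \binom{b_j+k-1}{b_j}$, since the $J_{\left(b_j\right)}\left(\eta_j\right)$-invariant subspaces of $\finiteFieldExtension{a_j}^{b_j}$ form a chain indexed by dimension. Multiplying these over $j$ gives the regular bound. When additionally all $b_j = 1$, each binomial reduces to $\binom{k}{1} = k$, and the product becomes $k^s$, yielding the regular semisimple bound.

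There is no serious obstacle: the argument is purely bookkeeping, and all analytic and geometric input (purity of twisted Kloosterman sheaves) has already been absorbed into Corollary \ref{cor:upper-bound}. The only delicate points are the eigenvalue-disjointness verification and the $q$-exponent arithmetic, both of which are routine.
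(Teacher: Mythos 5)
Your proposal is correct and is exactly the argument the paper intends: the corollary is stated as an immediate consequence of Theorem \ref{thm:multiplicativity-property} applied blockwise together with Corollary \ref{cor:upper-bound} for each $J_{\mu_j}\left(y_j\right)$, and your exponent bookkeeping $\sum_{i<j} n_i n_j + \tfrac{1}{2}\sum_j n_j^2 = \tfrac{n^2}{2}$ and the eigenvalue-disjointness check are the only details to verify. Nothing further is needed.
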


\appendix

\section{Relation to Speh representations}\label{appendix:speh-representations}

In this appendix, we explain the relation between twisted matrix Kloosterman sums and Speh representations.

Let $$U_k = N_{\left(1^k\right)} = \left\{\begin{pmatrix}
	1 & a_1 & \ast & \ast & \ast \\
	& 1 & a_2 & \ast & \ast \\
	& & \ddots & \ddots & \ast \\
	& &  & 1 & a_{k-1}\\
	& & & & 1
\end{pmatrix}\right\}.$$
Let $\fieldCharacter \colon \finiteField \to \multiplicativegroup{\cComplex}$ be a non-trivial character. We define a character $\fieldCharacter \colon U_k \to \multiplicativegroup{\cComplex}$ by the formula
$$\fieldCharacter\begin{pmatrix}
	1 & a_1 & \ast & \ast & \ast \\
	& 1 & a_2 & \ast & \ast \\
	& & \ddots & \ddots & \ast \\
	& &  & 1 & a_{k-1}\\
	& & & & 1
\end{pmatrix} = \fieldCharacter\left(\sum_{i=1}^{k-1} a_i\right).$$
A representation $\tau$ of $\GL_k\left(\finiteField\right)$ is called \emph{generic} if there exists $0 \ne v \in \tau$ such that $\tau\left(u\right)v = \fieldCharacter\left(u\right)v$, for every $u \in U_k$. Such $v$ is called a \emph{$\fieldCharacter$-Whittaker vector}. A well known result of Gelfand--Graev is that if $\tau$ is irreducible and generic, then its Whittaker vector is unique, up to scalar multiplication \cite[Corollary 5.6]{SilbergerZink00}.

It is well-known that for every choice of cuspidal representation $\sigma_1$, $\dots$, $\sigma_s$ of $\GL_{k_1}\left(\finiteField\right)$, $\dots$, $\GL_{k_s}\left(\finiteField\right)$, respectively, such that $k_1 + \dots + k_s = k$, there exists a unique irreducible generic representation $\tau$ of $\GL_k\left(\finiteField\right)$ with cuspidal support $\left\{\sigma_1,\dots,\sigma_s\right\}$ \cite[Theorem 5.5]{SilbergerZink00}. The parameter of this representation $\tau$ can be described as follows: let $\sigma'_1, \dots, \sigma'_{l'}$ be all the different cuspidal representations in the cuspidal support of $\tau$, where $\sigma'_j$ is an irreducible cuspidal representation of $\GL_{k'_j}\left(\finiteField\right)$. Let $m_j$ be the number of times that $\sigma'_j$ appears in the cuspidal support of $\tau$. Then $\tau$ has parameter $\varphi_{\tau}$, such that $\varphi_{\tau}\left(k'_j, \sigma'_j\right) = \left(m_j\right)$ for every $1 \le j \le l'$, and $\left(\right)$ outside of $\left(k'_j,\sigma'_j\right)_{j=1}^{l'}$.

For any irreducible generic representation $\tau$ of $\GL_{k}\left(\finiteField\right)$ and any $n \ge 1$, we may associate an irreducible representation of $\GL_{kn}\left(\finiteField\right)$ called the \emph{Speh representation} and denoted $\SpehRepresentation{\tau}{n}$. If $\tau$ is an irreducible generic representation, then, as explained above, it has a parameter $\varphi_{\tau}$, such that $\varphi_{\tau}\left(k_j, \sigma_j\right) = \left(m_j\right)$ where $m_j \ge 1$, for any irreducible cuspidal representation $\sigma_j$ of $\GL_{k_j}\left(\finiteField\right)$, such that $\sigma_j$ is in the cuspidal support of $\tau$. The Speh representation $\SpehRepresentation{\tau}{n}$ is the irreducible representation of $\GL_{kn}\left(\finiteField\right)$ whose parameter $\varphi_{\SpehRepresentation{\tau}{c}}$ is supported on $\left(k_j, \sigma_j\right)$ for every $\left(k_j, \sigma_j\right)$ in the support of $\varphi_{\tau}$, such that $$\varphi_{\SpehRepresentation{\tau}{c}}\left(k_j, \sigma_j\right) = \left(m_j, \dots, m_j\right) = \left(m_j^n\right),$$
where $\varphi_{\tau}\left(k_j, \sigma_j\right) = \left(m_j\right)$.

If $\alpha_1, \dots, \alpha_k \colon \multiplicativegroup{\finiteField} \to \multiplicativegroup{\cComplex}$ are characters, such that $\alpha_i \ne \alpha_j$ for $i \ne j$, then the parabolically induced representation $\tau = \alpha_1 \circ \dots \circ \alpha_k$ is an irreducible generic representation of $\GL_k\left(\finiteField\right)$. In this case, the Speh representation $\SpehRepresentation{\tau}{n}$ is the parabolically induced representation $\alpha_{1, \GL_n\left(\finiteField\right)} \circ \dots \circ \alpha_{k, \GL_n\left(\finiteField\right)}$, where $\alpha_{j, \GL_n\left(\finiteField\right)} \colon \GL_n\left(\finiteField\right) \to \multiplicativegroup{\cComplex}$ is the character $\alpha_{j, \GL_n\left(\finiteField\right)} = \alpha_j \circ \det$ (notice that $\SpehRepresentation{\tau}{n}$ is indeed an irreducible representation of $\GL_{nk}\left(\finiteField\right)$).

Let $\fieldCharacter_{\left(n^k\right)} \colon N_{\left(n^k\right)} \to \multiplicativegroup{\cComplex}$ be the following character of the unipotent radical $N_{\left(n^k\right)}$
$$\fieldCharacter_{\left(n^k\right)}\begin{pmatrix}
	\IdentityMatrix{n} & A_1 & \ast & \ast & \ast \\
	& \IdentityMatrix{n} & A_2 & \ast & \ast \\
	& & \ddots & \ddots & \ast \\
	& &  & \IdentityMatrix{n} & A_{k-1}\\
	& & & & \IdentityMatrix{n}
\end{pmatrix} = \fieldCharacter\left(\sum_{i=1}^{k-1} \trace A_i\right),$$
where $A_1,\dots,A_{k-1} \in \squareMatrix_n\left(\finiteField\right)$. In his master's thesis \cite{Carmon2023}, Oded Carmon proved that for any irreducible generic representation $\tau$ of $\GL_k\left(\finiteField\right)$, there exists a unique (up to scalar multiplication) vector $v \in \SpehRepresentation{\tau}{n}$, such that $\SpehRepresentation{\tau}{n}\left(u\right) v = \fieldCharacter_{\left(n^k\right)}\left(u\right) v$, for every $u \in N_{\left(n^k\right)}$. Such $v$ is called a \emph{$\left(k,n\right)$ $\fieldCharacter$-Whittaker vector}. Given this result, we may consider the matrix coefficient corresponding to $v$. Let $\innerproduct{\cdot}{\cdot}$ be an inner product on $\SpehRepresentation{\tau}{c}$ invariant under the $\GL_{kn}\left(\finiteField\right)$ action. Then we define for every $g \in \GL_{kn}\left(\finiteField\right)$, $$\besselSpehFunction{\tau}{n}\left(g\right) = \frac{\innerproduct{\SpehRepresentation{\tau}{n}\left(g\right)v}{v}}{\innerproduct{v}{v}},$$
where $0 \ne v \in \SpehRepresentation{\tau}{n}$ is a $\left(k,n\right)$ $\fieldCharacter$-Whittaker vector. This definition does not depend on the choice of the $\left(k,n\right)$ $\fieldCharacter$-Whittaker vector. We call $\besselSpehFunction{\tau}{n}$ the \emph{Bessel--Speh function} associated with the representation $\SpehRepresentation{\tau}{n}$ and the additive character $\fieldCharacter$.

In \cite[Section 5]{CarmonZelingher2024}, we express special values of the Bessel--Speh function in terms of exotic matrix Kloosterman sums. Let us write here our result for the special case where $\tau$ is a principal series representation. If $n=1$, we obtain a special case of a result of Curtis--Shinoda \cite{curtis2004zeta}.

\begin{theorem}
	Let $k > 1$ and let $\alpha_1, \dots, \alpha_k \colon \multiplicativegroup{\finiteField} \to \multiplicativegroup{\cComplex}$ be characters. Let $\tau$ be the unique irreducible generic subrepresentation of $\alpha_1 \circ \dots \circ \alpha_k$ (it is a representation of $\GL_k\left(\finiteField\right)$). Then for every $x \in \GL_n\left(\finiteField\right)$,
	
	$$ \besselSpehFunction{\tau}{n}\begin{pmatrix}
		& \IdentityMatrix{\left(k-1\right)n}\\
		x
	\end{pmatrix} = q^{-\left(k-1\right)n^2} \ExoticKloosterman\left(\alpha^{-1}, \fieldCharacter, \left(-1\right)^{k-1} x^{-1}\right),$$
where $\alpha^{-1} \colon \left(\multiplicativegroup{\finiteField}\right)^k \to \multiplicativegroup{\cComplex}$ is given by $\alpha^{-1} = \alpha_1^{-1} \times \dots \times \alpha_k^{-1}$, and $\ExoticKloosterman\left(\alpha^{-1}, \fieldCharacter, \left(-1\right)^{k-1} x^{-1}\right)$ is the twisted matrix Kloosterman sum (see \Cref{sec:twisted-kloosterman-sums}).
\end{theorem}

\bibliographystyle{abbrv}
\bibliography{references}

\begin{thebibliography}{10}

\bibitem{kaplan2018}
Y.~Cai, S.~Friedberg, and E.~Kaplan.
\newblock Doubling constructions: {G}lobal functoriality for non-generic
  cuspidal representations.
\newblock {\em Annals of Mathematics}, 2024.

\bibitem{Carmon2023}
O.~Carmon.
\newblock {S}halika {V}ectors for {I}rreducible {R}epresentations of
  $\mathrm{GL}_n$ over a {F}inite {F}ield.
\newblock Master's thesis, Tel Aviv University, 2023.
\newblock
  \href{https://tau.primo.exlibrisgroup.com/discovery/delivery/972TAU_INST:TAU/12435684040004146}{DaTA:9933623517704146}.

\bibitem{CarmonZelingher2024}
O.~Carmon and E.~Zelingher.
\newblock On {G}inzburg-{K}aplan gamma factors and {B}essel-{S}peh functions
  for finite general linear groups.
\newblock 2024.
\newblock \href{https://arxiv.org/abs/2406.14262}{arXiv:2406.14262}.

\bibitem{ChaeKim2003}
H.-j. Chae and D.~S. Kim.
\newblock {$L$} functions of some exponential sums of finite classical groups.
\newblock {\em Math. Ann.}, 326(3):479--487, 2003.

\bibitem{curtis1999unitary}
C.~W. Curtis and K.-i. Shinoda.
\newblock Unitary {K}loosterman sums and the {G}elfand-{G}raev representation
  of {${\rm GL}_2$}.
\newblock {\em J. Algebra}, 216(2):431--447, 1999.

\bibitem{curtis2004zeta}
C.~W. Curtis and K.-i. Shinoda.
\newblock Zeta functions and functional equations associated with the
  components of the {G}elfand-{G}raev representations of a finite reductive
  group.
\newblock In {\em Representation theory of algebraic groups and quantum
  groups}, volume~40 of {\em Adv. Stud. Pure Math.}, pages 121--139. Math. Soc.
  Japan, Tokyo, 2004.

\bibitem{deligne569cohomologie}
P.~Deligne.
\newblock {\em Cohomologie \'{e}tale}, volume 569 of {\em Lecture Notes in
  Mathematics}.
\newblock Springer-Verlag, Berlin, 1977.
\newblock S\'{e}minaire de g\'{e}om\'{e}trie alg\'{e}brique du Bois-Marie SGA
  $4\frac{1}{2}$.

\bibitem{DesarmenienLeclercThibon1994}
J.~D\'{e}sarm\'{e}nien, B.~Leclerc, and J.-Y. Thibon.
\newblock Hall-{L}ittlewood functions and {K}ostka-{F}oulkes polynomials in
  representation theory.
\newblock {\em S\'{e}m. Lothar. Combin.}, 32:Art. B32c, approx. 38, 1994.

\bibitem{DeshouillersIwaniec1982}
J.-M. Deshouillers and H.~Iwaniec.
\newblock Kloosterman sums and {F}ourier coefficients of cusp forms.
\newblock {\em Invent. Math.}, 70(2):219--288, 1982.

\bibitem{erdelyi2022purity}
M.~Erd\'{e}lyi, W.~Sawin, and A.~T\'{o}th.
\newblock The purity locus of matrix {K}loosterman sums.
\newblock {\em Trans. Amer. Math. Soc.}, 377(06):4117--4132, 2024.

\bibitem{erdelyi2021matrix}
M.~Erd{\'e}lyi and {\'A}.~T{\'o}th.
\newblock {M}atrix {K}loosterman sums.
\newblock {\em Algebra Number Theory}, 18(12):2247--2308, 2024.

\bibitem{ErdelyiTothZabradi2024}
M.~Erd\'{e}lyi, A.~T\'{o}th, and G.~Z\'{a}br\'{a}di.
\newblock Matrix {K}loosterman sums modulo prime powers.
\newblock {\em Math. Z.}, 306(4):Paper No. 68, 2024.

\bibitem{fu2005functions}
L.~Fu and D.~Wan.
\newblock {$L$}-functions for symmetric products of {K}loosterman sums.
\newblock {\em J. Reine Angew. Math.}, 589:79--103, 2005.

\bibitem{Fulman2001}
J.~Fulman.
\newblock A new bound for {K}loosterman sums.
\newblock {\em arXiv}, 2001.
\newblock \href{https://arxiv.org/abs/math/0105172}{arXiv:0105172}.

\bibitem{Gelfand70}
S.~I. Gel'fand.
\newblock Representations of the full linear group over a finite field.
\newblock {\em Math. USSR Sb.}, 12(13):13--39, 1970.

\bibitem{Gordon2022}
G.~Gordon.
\newblock Cycle type factorizations in {${\rm GL}_n\Bbb F_q$}.
\newblock {\em Algebr. Comb.}, 5(6):1427--1459, 2022.

\bibitem{GorodetskyRodgers2021}
O.~Gorodetsky and B.~Rodgers.
\newblock Traces of powers of matrices over finite fields.
\newblock {\em Trans. Amer. Math. Soc.}, 374(7):4579--4638, 2021.

\bibitem{Green55}
J.~A. Green.
\newblock The characters of the finite general linear groups.
\newblock {\em Trans. Amer. Math. Soc.}, 80:402--447, 1955.

\bibitem{gurevich2021harmonic}
S.~Gurevich and R.~Howe.
\newblock Harmonic analysis on {$GL_n$} over finite fields.
\newblock {\em Pure Appl. Math. Q.}, 17(4):1387--1463, 2021.

\bibitem{Haiman2003}
M.~Haiman.
\newblock Combinatorics, symmetric functions, and {H}ilbert schemes.
\newblock In {\em Current developments in mathematics, 2002}, pages 39--111.
  Int. Press, Somerville, MA, 2003.

\bibitem{HeinlothNgoYun2013}
J.~Heinloth, B.-C. Ng\^{o}, and Z.~Yun.
\newblock Kloosterman sheaves for reductive groups.
\newblock {\em Ann. of Math. (2)}, 177(1):241--310, 2013.

\bibitem{Hodges1956}
J.~H. Hodges.
\newblock Weighted partitions for general matrices over a finite field.
\newblock {\em Duke Math. J.}, 23:545--552, 1956.

\bibitem{katz2016gauss}
N.~M. Katz.
\newblock {\em Gauss sums, {K}loosterman sums, and monodromy groups}, volume
  116 of {\em Annals of Mathematics Studies}.
\newblock Princeton University Press, Princeton, NJ, 1988.

\bibitem{Kim1998}
D.~S. Kim.
\newblock Gauss sums for symplectic groups over a finite field.
\newblock {\em Monatsh. Math.}, 126(1):55--71, 1998.

\bibitem{Kirillov00}
A.~N. Kirillov.
\newblock New combinatorial formula for modified {H}all-{L}ittlewood
  polynomials.
\newblock In {\em {$q$}-series from a contemporary perspective ({S}outh
  {H}adley, {MA}, 1998)}, volume 254 of {\em Contemp. Math.}, pages 283--333.
  Amer. Math. Soc., Providence, RI, 2000.

\bibitem{Kondo1963}
T.~Kondo.
\newblock On {G}aussian sums attached to the general linear groups over finite
  fields.
\newblock {\em J. Math. Soc. Japan}, 15:244--255, 1963.

\bibitem{KowalskiMichelSawin2017}
E.~Kowalski, P.~Michel, and W.~Sawin.
\newblock Bilinear forms with {K}loosterman sums and applications.
\newblock {\em Ann. of Math. (2)}, 186(2):413--500, 2017.

\bibitem{Macdonald80}
I.~G. Macdonald.
\newblock Zeta functions attached to finite general linear groups.
\newblock {\em Math. Ann.}, 249(1):1--15, 1980.

\bibitem{macdonald1998symmetric}
I.~G. Macdonald.
\newblock {\em {S}ymmetric functions and {H}all polynomials}.
\newblock Oxford university press, 1998.

\bibitem{Ram2023}
S.~Ram.
\newblock {S}ubspace {P}rofiles over {F}inite {F}ields and $ q $-{W}hittaker
  {E}xpansions of {S}ymmetric {F}unctions.
\newblock 2023.
\newblock \href{https://arxiv.org/abs/2309.16607}{arXiv:2309.16607}.

\bibitem{SilbergerZink00}
A.~J. Silberger and E.-W. Zink.
\newblock The characters of the generalized {S}teinberg representations of
  finite general linear groups on the regular elliptic set.
\newblock {\em Trans. Amer. Math. Soc.}, 352(7):3339--3356, 2000.

\bibitem{Yun2016}
Z.~Yun.
\newblock Epipelagic representations and rigid local systems.
\newblock {\em Selecta Math. (N.S.)}, 22(3):1195--1243, 2016.

\bibitem{Zelingher2023}
E.~Zelingher.
\newblock On values of the {B}essel function for generic representations of
  finite general linear groups.
\newblock {\em Adv. Math.}, 434:Paper No. 109314, 47, 2023.

\end{thebibliography}
\end{document}